\makeatletter\newcommand{\note}[1]{{\textcolor{red}{[#1]}}\@latex@warning{Note: #1}}\makeatother
\DeclareSymbolFont{tipa}{T3}{cmr}{m}{n}
\DeclareMathAccent{\invbreve}{\mathalpha}{tipa}{16}
\theoremstyle{plain}
\newtheorem{theorem}{Theorem}
\newtheorem{corollary}[theorem]{Corollary}
\newtheorem{proposition}[theorem]{Proposition}
\newtheorem{lemma}[theorem]{Lemma}
\theoremstyle{definition}
\newcommand{\ind}{\mathbf{1}}
\newcommand{\R}{\mathbb{R}}
\newcommand{\Z}{\mathbb{Z}}
\newcommand{\rmd}{\mathrm{d}}
\newcommand{\tree}{\mathfrak{t}}
\newcommand{\polytope}{\mathcal{A}_\tree}
\newcommand{\dd}{\mathop{}\!\mathrm{d}}
\newcommand{\vertex}{\mathsf{v}}
\newcommand{\edge}{\mathsf{e}}
\newcommand{\treeset}{\mathfrak{T}}
\newcommand{\vsetinner}{\mathsf{V}}
\newcommand{\vsetboundary}{\mathsf{B}}
\newcommand{\edgeset}{\mathsf{E}}
\newcommand{\cornerset}{\mathsf{C}}
\newcommand{\corner}{\mathsf{c}}
\newcommand{\spinebijection}{\mathsf{Spine}}
\newcommand{\bvertex}{\mathsf{b}}
\newcommand{\dubtree}{\mathsf{t}}
\newcommand{\pdv}[1]{\frac{\partial}{\partial #1}}
\newcommand{\hypwedge}{%
  \mathrel{\vcenter{\hbox{%
    \begin{tikzpicture}[scale=0.25, line join=round, line cap=round]
      % Coordinates for the three corners
      \coordinate (A) at (0,0.9);
      \coordinate (B) at (1,0.9);
      \coordinate (C) at (0.5,0.);
      % Draw slightly concave arcs for sides
      \draw[thick]
        (A) .. controls (0.3,0.5) and (0.35,0.6) .. (C)
        (C) .. controls (0.65,0.6) and (0.7,0.5) .. (B)
        (B) .. controls (0.5,1.1) and (0.5,1.1) .. (A);
    \end{tikzpicture}%
  }}}%
}
\newcommand{\hypdiamond}{%
  \mathrel{\vcenter{\hbox{%
    \begin{tikzpicture}[scale=0.29, line join=round, line cap=round]
      % Coordinates for the three corners
      \coordinate (A) at (0.1,0.5);
      \coordinate (B) at (0.9,0.5);
      \coordinate (C) at (0.5,0);
	  \coordinate (D) at (0.5,1);
      % Draw slightly concave arcs for sides
      \draw[thick]
        (A) .. controls (0.25,0.55) and (0.45,0.75) .. (D)
		(A) .. controls (0.25,0.45) and (0.45,0.25) .. (C)
        (B) .. controls (0.75,0.55) and (0.55,0.75) .. (D)
		(B) .. controls (0.75,0.45) and (0.55,0.25) .. (C)
		(A) -- (B);
    \end{tikzpicture}%
  }}}%
}
\DeclareRobustCommand{\cev}[1]{%
  \mathpalette\do@cev{#1}%
}
\newcommand{\do@cev}[2]{%
  \fix@cev{#1}{+}%
  \reflectbox{$\m@th#1\vec{\reflectbox{$\fix@cev{#1}{-}\m@th#1#2\fix@cev{#1}{+}$}}$}%
  \fix@cev{#1}{-}%
}
\newcommand{\fix@cev}[2]{%
  \ifx#1\displaystyle
    \mkern#23mu
  \else
    \ifx#1\textstyle
      \mkern#23mu
    \else
      \ifx#1\scriptstyle
        \mkern#22mu
      \else
        \mkern#22mu
      \fi
    \fi
  \fi
}
\begin{document}
\title{A tree bijection for cusp-less planar hyperbolic surfaces}
\author{Bart Zonneveld}
\maketitle
\begin{abstract}
	Recently, a tree bijection has been found for planar hyperbolic surfaces, which allows for an easy computation of the Weil--Petersson volumes, and opens the path to get distance statistic on random hyperbolic surfaces and to find scaling limits when the number of boundaries becomes large. 
    Crucially, this tree bijection requires the hyperbolic surface to have at least one cusp as origin, from which point distances are measured. 

    In this paper we will extend this tree bijection, such that having a cusp is no longer required. 
    We will first extend the bijection to half-tight cylinders.
    Since general planar hyperbolic surfaces can be naturally decomposed in two half-tight cylinders, this general case is also covered.

    In the half-tight cylinder the distances to the origin are replaced by the so-called Busemann function. 
    This Busemann function is not well-defined on the surface, but it is on the cylinder cover. 
\end{abstract}

\section{Introduction}

Since the ground-breaking work \cite{Mirzakhani2006_SimpleGeodesics_TopRec} by Mirzakhani, there has been a great interest in Weil--Petersson hyperbolic surfaces as a model for random surfaces. 
In this work she finds a recursion relation for the Weil--Petersson volumes. 
It was soon discovered that this recursion is an example of topological recursion \cite{eynard2016_countingSurfaces}, and has interesting applications in physics, in particular for Jackiw--Teitelboim quantum gravity \cite{Mertens2023_Solvable_JTReview}. 

The work also opened up other research directions, focussing on the high genus limit \cite{Mirzakhani2010_Growth_High_genus}, or on finding various statistics of lengths of geodesics on (random) hyperbolic surfaces.

Recently, there are works that re-derive some of Mirzakhani's results using a different approach, such that it can benefit from the existing methods of peeling \cite{Budd2026_Peeling} or such that a bijective correspondence can be found, which can help to find distance statistics. 
A great example of the benefit of a bijective method is \cite{budd2025_randompuncturedhyperbolicsurfaces}, where the authors use a tree bijection to find local and scaling limits of planar hyperbolic surfaces with high number of cusps.

In \cite{Budd2025_Tree_bij} the tree bijection of \cite{budd2025_randompuncturedhyperbolicsurfaces} is further refined, allowing for some geodesic boundaries for the planar hyperbolic surfaces, which gives an interpretation of the `string equation', which is well-known in physics literature (e.g. \cite{DOUGLAS1990_Strings}).
Crucially, the tree bijection in \cite{Budd2025_Tree_bij} depends on the presence of at least one cusp. 
In this work, we will get rid of this restriction and derive a (double) tree bijection for planar hyperbolic surfaces where all boundaries are geodesics of positive length.

It is well known that Mirzakhani's recursion for the Weil--Petersson volumes extends to the case where we have cone-points instead of geodesic boundaries, although one needs to be careful when the cone angles become large, in which case the Weil--Petersson volumes have some non-smooth `wall-crossings' \cite{anagnostou2023_weilpetersson_WallCross}. 
In upcoming work \cite{Budd2026_Cones}, the adaptation of the tree bijection from \cite{Budd2025_Tree_bij} to the case where the boundaries are cones and at least one cusp, will be discussed.
The case where boundaries are cones is still open.  
\\\\
The motivation to search for an extension of the tree bijection from \cite{Budd2025_Tree_bij} in the case where all boundaries are geodesics mainly comes from the connection with maps. 
Here \cite{Budd2025_Tree_bij} should be seen as the equivalent of the Bouttier--Di~Francesco--Guitter bijection \cite{Bouttier2004_Planar_BDFG}, which is a bijection for pointed maps.
In a series of papers \cite{Bouttier2014_OnIrreducible_BGM_A,Bouttier2022_Bijective_BGM_B,Bouttier2024_OnQuasi_BGM_C} Bouttier, Guitter and Miermont, introduces slices to bijectively enumerate tight planar (bipartite) maps.
These papers are the main motivation for this work. 

More specifically, in this paper Theorem~\ref{thm:HTC_bij} is heavily inspired by \cite[Section~7]{Bouttier2014_OnIrreducible_BGM_A} (see also Chapter~2 of \cite{Bouttier2019_Planar_Hab_thesis}, specifically Theorem~2.1), while Theorem~\ref{thm:Full_bij} follows from the same decomposition into half-tight cylinders as proven for maps in \cite[Section~4.4]{Bouttier2024_OnQuasi_BGM_C}. 
\\\\
It is important to note that Theorem~\ref{thm:recursion} of this paper, has been proven in \cite{Budd2024_Top_rec} to hold for any genus $g\geq0$ by solving algebraic equations of generating functions. 
This suggests that the tree bijection from this paper might be extended to general genus surfaces.
Although there is a natural generalization of the tree bijection to $g>0$, which is a bijection to a unicellular map, this fails to be useful in the computation of Weil--Petersson volumes $V_{g,n}(\mathbf{L})$ for $g>0$.
Whether there is a different variant of this tree bijection that can explain the  Weil--Petersson volumes for higher genera is an open question.

\subsection{Half-tight cylinders}
As said, we will extend the tree bijection from \cite{Budd2025_Tree_bij}, which requires at least one cusp (say $L_1=0$) to work. 
Before we can tackle the general case, the case is considered where $L_1>0$, but in some sense still small.

\begin{figure}
    \centering
    \begin{subfigure}[t]{.45\textwidth}
        \centering
        \includegraphics[scale=3]{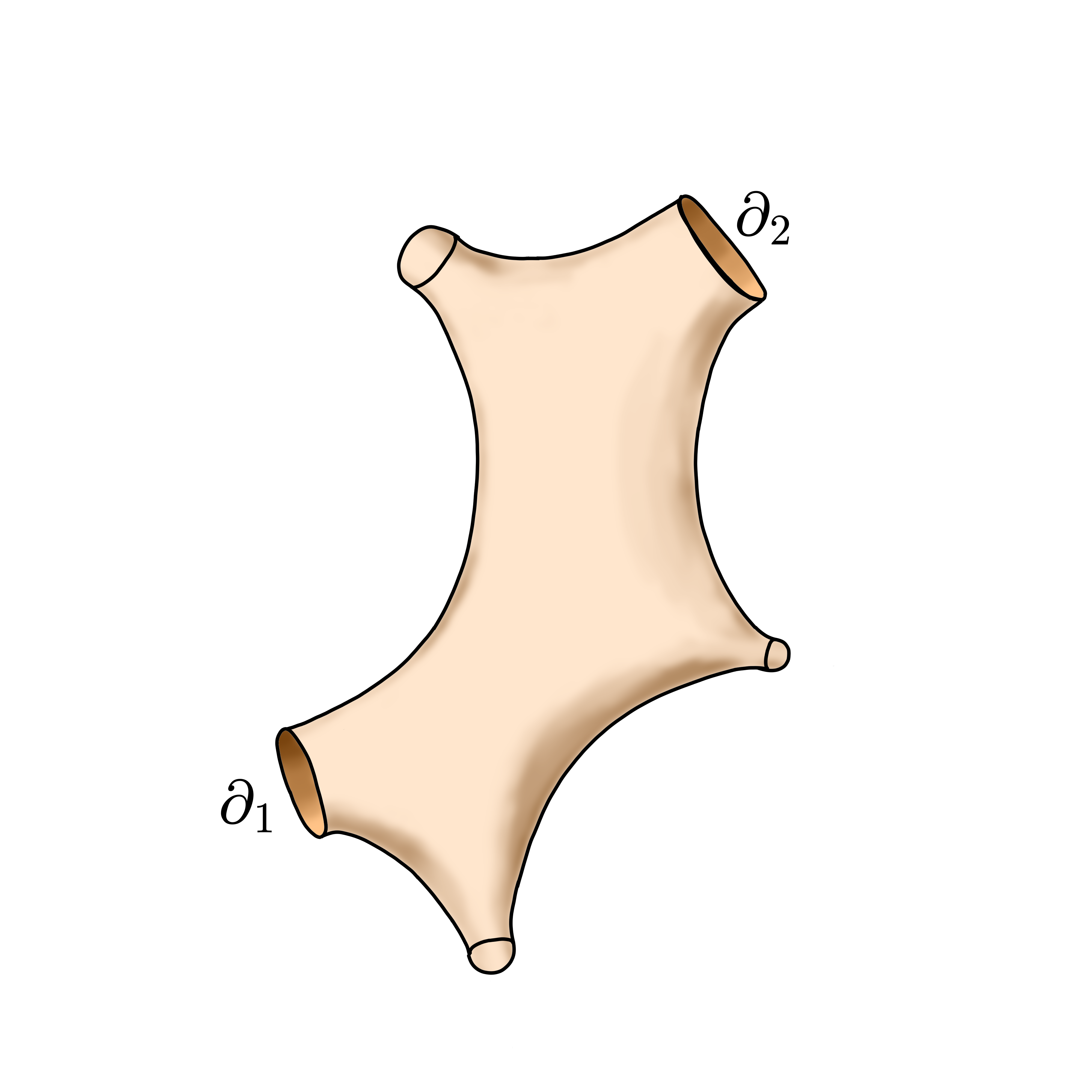}
        \caption{To see whether a curve is separating, we cap off all boundaries except the first two. This gives a cylinder.\label{fig:capped}}
    \end{subfigure}
    \hspace{.05\textwidth}
    \begin{subfigure}[t]{.45\textwidth}
        \centering
        \includegraphics[scale=3]{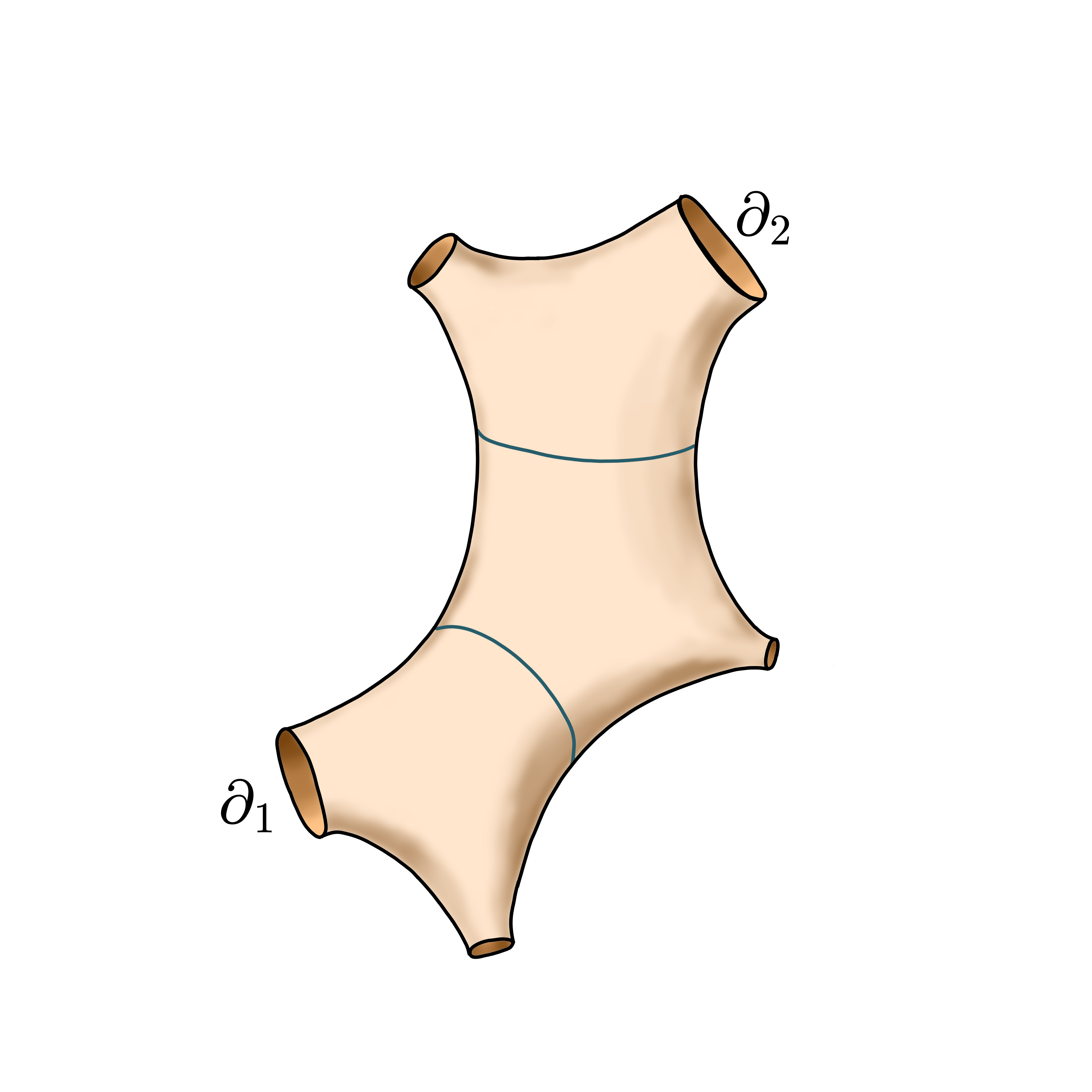}
        \caption{Two separating curves. This surface is a half-tight cylinder if the length of the separating curves (including $\partial_2$) is strictly less than $L_1$.}
    \end{subfigure}
    \caption{}
\end{figure}

To make this statement precise, we interpret a surface $X\in\mathcal{M}_{0,n}(\mathbf{L})$ as a topological cylinder by only considering $\partial_1$ and $\partial_2$ as real boundaries, whereas the other boundaries $\partial_3,\cdots,\partial_n$ should topologically be considered as being `capped off', see Figure~\ref{fig:capped}.%
\footnote{This is the analogous to the distinction between boundaries and inner faces in the setting of maps.}
In this interpretation, the surface is topologically a cylinder, and each closed curve is either null-homotopic or homotopic to (an integer  multiple of) $\partial_1$.
In the latter case the curve is called \emph{separating}.

A surface $X\in\mathcal{M}_{0,n}(\mathbf{L})$ is called a half-tight cylinder when $\partial_1$ is strictly tight, which means that it is the unique shortest separating curve.\footnote{Note that in \cite{Budd2024_Top_rec} it is $\partial_2$ that is tight.}
Note that this implies $L_1<L_2$.

We denote the moduli space of half-tight cylinders by $\mathcal{H}_n(\mathbf{L})\subseteq \mathcal{M}_{0,n}(\mathbf{L})$ and the corresponding Weil--Peterson volumes as $H_n(\mathbf{L})$.
The generating function of these volumes is given by
\begin{align}
 H(L_1,L_2;\mu] &= \sum_{p=1}^\infty \frac{1}{p!} \int \dd{\mu(L_3)}\cdots \int \dd{\mu(L_{2+p})} H_p(\mathbf{L}),  
\end{align}
where $\mu$ is a real linear function on the ring of even, real polynomials, and we use the notation
\begin{align}
    \mu(f) = \int \rmd \mu(K) f(K).
\end{align}
See \cite{Budd2024_Top_rec} for more details.

These half-tight cylinders will be the important building block in this paper. 
We will both show that the tree bijection from \cite{Budd2025_Tree_bij} can be easily extended to half-tight cylinders and that a general planar hyperbolic surface can be decomposed into half-tight cylinders.

\subsubsection*{Trees and polytopes of the half tight cylinder}
We will adapt the notation from \cite{Budd2025_Tree_bij}.
A major difference in notation will be that in \cite{Budd2025_Tree_bij}, surfaces with $n+1$ boundaries are considered, with the first one being the origin and the others labeled $\{1,\ldots,n\}$, while in this paper, we will consider surfaces with $n$ boundaries, where the first boundary gets label $1$ and the remaining boundaries get labels $\{2,\ldots,n\}$.
Also note that we assume all $L_\bvertex>0$.%
\footnote{One could include $L_\bvertex=0$ for some $\bvertex\neq\bvertex_1,\bvertex_2$, using similar techniques as in \cite{Budd2025_Tree_bij}, but then one can also just relabel to get $L_1=0$ and use the bijection in \cite{Budd2025_Tree_bij} entirely.}

That being said, we introduce the set of bicolored (red and white) plane trees $\tree\in\treeset^{\mathrm{all,HTC}}_n$ with $n-1$ white \emph{boundary} vertices $\bvertex\in\vsetboundary(\tree)$ labeled $2,\ldots, n$, and a finite number of red \emph{inner} vertices $\vertex\in\vsetinner$ of degree at least $3$.
Furthermore, each corner of a boundary vertex is labeled either \emph{ideal} or \emph{non-ideal}, the latter being denoted by $\cornerset(\tree)$.
In contrast to \cite{Budd2025_Tree_bij}, all boundary vertices have at least one non-ideal corner.%
\footnote{This is actually in agreement with $\treeset^{\mathrm{all}}_n(\mathbf{L})$ in \cite{Budd2025_Tree_bij}, since we will assume $L_\bvertex>0$.}

The edges in the tree are denoted by $\edge\in\edgeset(\tree)$, or $\vec{\edge},\cev{\edge}\in\vec{\edgeset}(\tree)$ when we use oriented edges.
These oriented edges are also denoted by $\vec{\edge}_{\vertex,i}$, which is the $i$th edge starting in $\vertex$ in clockwise order, where the first edge is chosen in an arbitrary deterministic way.
We have similar notation $\corner_{\bvertex,j}$ for non-ideal corners.
Finally, we denote $\bvertex_i\in\vsetboundary(\tree)$ the boundary vertex with label $i$.

\begin{figure}
    \centering
    \begin{subfigure}[t]{.45\textwidth}
        \centering
        \includegraphics[scale=1]{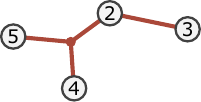}
        \caption{An example tree $\tree\in\treeset^{\mathrm{all,HTC}}_5$. One of the corners of $\bvertex_2$ can be marked as ideal. 
        Note that this tree has a top-dimensional polytope when all corners are marked as non-ideal.}
    \end{subfigure}
    \hspace{.05\textwidth}
    \begin{subfigure}[t]{.45\textwidth}
        \centering
        \includegraphics[scale=1]{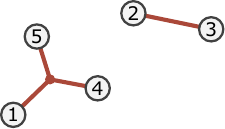}
        \caption{An example double tree $\dubtree\in\treeset^{\mathrm{all,double}}_5$.
        All corners need to be marked as non-ideal. 
        This double tree as a top-dimensional polytope.}
    \end{subfigure}
    \caption{}
\end{figure}

To a $\tree\in\treeset^{\mathrm{all,HTC}}_n$ we associate the \emph{half-tight polytope} $\mathcal{A}_{\tree}(\mathbf{L})$ as follows:
\begin{align}
	\mathcal{A}_{\tree}(\mathbf{L}) \coloneqq &\left\{ \varphi : \vec{\edgeset}(\tree) \to \R_{\geq 0} \middle|\begin{array}{l}
	\varphi(\vec{\edge}_{\bvertex,i}) = 0\text{ for }\bvertex\in\vsetboundary(\tree), 1\leq i\leq\deg(\bvertex),\\
	\varphi(\vec{\edge}_{\vertex,i}) > 0\text{ for }\vertex\in \vsetinner(\tree),  1\leq i\leq\deg(\vertex),\\
	\sum_{i=1}^{\deg(\vertex)}\varphi(\vec{\edge}_{\vertex,i}) = \pi\text{ for }\vertex\in \vsetinner(\tree), \nonumber\\ 
	\varphi(\vec{\edge}) + \varphi(\cev{\edge}) < \pi\text{ for }\edge\in \edgeset(\tree)\end{array}\right\} \\
    &\times \left[\Delta_{\deg(\bvertex_2)}^{((L_2-L_1)/2)} \times \Delta_{\operatorname{nonid}(\bvertex_2)}^{((L_2+L_1)/2)}\right]
    \times \prod_{\bvertex\in \vsetboundary(\tree)\setminus\{\bvertex_2\}} \left[\Delta_{\deg(\bvertex)}^{(L_{\bvertex}/2)} \times \Delta_{\operatorname{nonid}(\bvertex)}^{(L_{\bvertex}/2)}\right].\label{eq:polytopedef}
\end{align}
where $\Delta_k^{(y)} = \{ \mathbf{x} \in (0,\infty)^{k} : x_1 + \cdots + x_{k} = y \}$ is the open $(k-1)$-dimensional simplex of size $y$.

The half-tight polytopes are of dimension 
\begin{align}
	\dim\mathcal{A}_\tree(\mathbf{L}) 
	&= 2n-6 + \sum_{\vertex\in\vsetinner(\tree)} (3-\deg(\vertex)) + \sum_{\bvertex\in\vsetboundary(\tree)} (\operatorname{nonid}(\bvertex)-\deg(\bvertex)), \label{eq:dimpolytope}
\end{align}
from which follows that the top-dimensional polytopes correspond to trees $\tree\in\treeset^{\mathrm{all,HTC}}_n$, where all inner vertices are of degree $3$ and all corners are marked as non-ideal. 

For these top-dimension polytopes, we can introduce the following natural volume measure given by $2^{n-3}$ times the $(2n-6)$-dimensional Lebesgue measure
\begin{align}
	\operatorname{Vol}_\tree = 2^{n-3} \prod_{\vertex\in \vsetinner(\tree)}\rmd\varphi(\vec{\edge}_{\vertex,1})\rmd\varphi(\vec{\edge}_{\vertex,2}) \prod_{\bvertex\in \vsetboundary(\tree)} \prod_{j=1}^{\deg(\bvertex)-1}\rmd w_{\bvertex,j}\rmd v_{\bvertex,j},\label{eq:measurepolytope}
\end{align}
where $(w_{\bvertex,1},\ldots,w_{\bvertex,\deg(\bvertex)},v_{\bvertex,1},\ldots,v_{\bvertex,\deg(\bvertex)})$ parametrizes the simplices of $\bvertex$.

We can now formulate the first bijection:
\begin{theorem}\label{thm:HTC_bij}
    There exists a bijection
    \begin{align}
        \spinebijection:\mathcal{H}_n(\mathbf{L})\to \bigsqcup_{\tree \in \treeset^{\mathrm{all,HTC}}_n} \mathcal{A}_{\tree}(\mathbf{L}).
    \end{align} 
    Moreover, the push-forward of the Weil--Petersson measure is the measure $\operatorname{Vol}_\tree$ on the top-dimensional polytopes.
\end{theorem}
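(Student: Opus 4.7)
The plan is to adapt the spine construction of \cite{Budd2025_Tree_bij} by replacing the distance to the cusp by the Busemann function $b$ associated to $\partial_1$. Since $\partial_1$ is a closed geodesic, $b$ is not well-defined on $X$ itself; I would work on the cylinder cover $\widetilde{X}\to X$ corresponding to $\langle\partial_1\rangle\subset\pi_1(X)$, on which $b:\widetilde{X}\to\R$ is defined up to an additive constant that I fix by requiring $b\equiv 0$ on the lift of $\partial_1$. Strict tightness of $\partial_1$ guarantees that $b$ is proper on the $\partial_1$-end and that its cut locus descends to a finite graph $S\subset X$ whose complement contains a collar neighbourhood of $\partial_1$.

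The forward map $\spinebijection$ reads the tree off $S$. Collapsing each boundary $\partial_i$, $i\geq 2$, to a single white vertex $\bvertex_i$ turns $S$ into a plane tree $\tree\in\treeset^{\mathrm{all,HTC}}_n$: planarity comes from the cyclic order inherited from the cylinder cover, and tree-ness from the fact that any loop in $S$ would produce a separating curve competing with $\partial_1$, contradicting strict tightness. Red inner vertices are the triple collision points of the equidistant-curve front of $b$, and the edge weights $\varphi(\vec{\edge}_{\vertex,i})$ are the angles between consecutive arriving equidistant arcs. These angles sum to $\pi$ at each red vertex by definition of a collision and each reverse pair $\varphi(\vec{\edge})+\varphi(\cev{\edge})$ is strictly less than $\pi$ because the two opposite equidistant arcs strictly diverge, giving exactly the $\varphi$-part of $\mathcal{A}_\tree(\mathbf{L})$.

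The boundary decoration is produced by how $S$ cuts each $\partial_i$. For $i\geq 3$, the two sides of $\partial_i$ between collisions of the equidistant front carry total length $L_i/2$ each, and these arc lengths organise into the pair of simplices $\Delta_{\deg(\bvertex_i)}^{(L_i/2)}\times\Delta_{\operatorname{nonid}(\bvertex_i)}^{(L_i/2)}$ exactly as in \cite{Budd2025_Tree_bij}. At $\bvertex_2$ the picture is asymmetric: the Busemann flow matches a subarc of $\partial_2$ of length exactly $L_1$ with $\partial_1$, and the two sides of $\partial_2$ therefore carry the shifted total lengths $(L_2-L_1)/2$ and $(L_2+L_1)/2$, matching the shifted simplices in \eqref{eq:polytopedef}. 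This is the single place where the tightness hypothesis $L_1<L_2$ enters the construction in an essential way.

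The inverse map and the measure statement follow the template of \cite{Budd2025_Tree_bij}. Given $\tree$ and a point of $\mathcal{A}_\tree(\mathbf{L})$, I would reconstruct $X$ by assembling a hyperbolic quadrilateral for each corner of $\tree$ with angles dictated by $\varphi$ and side lengths dictated by the simplex coordinates, and glue along common edges; the angle-sum identities ensure a smooth hyperbolic metric, the simplex identities ensure the correct boundary lengths $L_i$, and the shifted simplex at $\bvertex_2$ forces $\partial_1$ to emerge as the unique shortest separating curve of length $L_1$. The Weil--Petersson form factorises edge-by-edge into the Lebesgue measures $\rmd\varphi(\vec{\edge}_{\vertex,1})\rmd\varphi(\vec{\edge}_{\vertex,2})$ and $\rmd w_{\bvertex,j}\rmd v_{\bvertex,j}$ exactly as in \cite{Budd2025_Tree_bij}, yielding $\operatorname{Vol}_\tree$. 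The main obstacle will be the analysis at $\bvertex_2$: one must verify rigorously that the Busemann cut locus meets $\partial_2$ in an arc of length exactly $L_1$ respecting the cyclic labelling of the corners of $\bvertex_2$, so that the asymmetric splitting $(L_2\mp L_1)/2$ emerges canonically and bijectively from the geometry of $\widetilde{X}$.
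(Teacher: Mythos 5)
Your overall route is the paper's: pass to the cylinder cover associated to $\langle\partial_1\rangle$, use strict tightness to make the lift of $\partial_1$ a minimizing geodesic so that a Busemann function exists, take its cut locus (the points with several co-rays) as the spine, tile by quadrilaterals and wedges, obtain the asymmetric simplices at $\bvertex_2$ from the $L_1$-monodromy of the Busemann function, and delegate the measure statement to the shear-coordinate computation of \cite{Budd2025_Tree_bij}. So there is no methodological divergence to report, only the question of whether the sketch closes.

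It does not quite close: the genuine gap is on the inverse side. You assert that ``the shifted simplex at $\bvertex_2$ forces $\partial_1$ to emerge as the unique shortest separating curve'', but nothing in your outline proves this, and it is exactly the new difficulty relative to \cite{Budd2025_Tree_bij}: one must show that the surface produced by gluing the quadrilaterals and wedges is half-tight, and that the glued-in complex is recovered as its Busemann cut locus; otherwise $\mathsf{Glue}$ neither lands in $\mathcal{H}_n(\mathbf{L})$ nor inverts $\spinebijection$. The paper handles this with the analogue of the wrapping lemma of \cite[Section~7.2]{Bouttier2014_OnIrreducible_BGM_A}, namely Lemma~\ref{lem:min_dist}: in a slice of the cylinder cover bounded by two lifts $\hat\beta_1$ and $A\hat\beta_1$ of a chosen rib at $\bvertex_2$, the distance between $\hat\beta_1(t)$ and $A\hat\beta_1(t)$ exceeds $L_1$, which is what excludes separating curves of length at most $L_1$ and lets Proposition~\ref{prop:HTC_tree_bij} verify that co-rays of the glued surface stay in a slice and reproduce the prescribed spine. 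Two further points deserve correction. First, your claim that tightness enters ``in a single place'' (the splitting at $\bvertex_2$) undersells it: strict tightness is what makes $\hat\partial_1$ minimizing so the Busemann function exists at all (Lemma~\ref{lem:minimizing_geod}), what forces base spirals to spiral to $\partial_1$ rather than to another closed geodesic (Lemma~\ref{lem:spiral_to_partial_1}), and what underlies the finiteness of the cut locus (Lemma~\ref{lem:eps_opt}, Corollary~\ref{cor:finite_w}), which you assume without argument. Second, the asymmetric splitting at $\bvertex_2$ is not obtained by matching a subarc of $\partial_2$ of length $L_1$ with $\partial_1$; it comes from defining the Busemann function on a slice so that it jumps by $L_1$ across one chosen rib at $\bvertex_2$, which shifts the horocycle-matching condition there to $\sum_j w_{\bvertex_2,j}=(L_2-L_1)/2$ and $\sum_j v_{\bvertex_2,j}=(L_2+L_1)/2$.
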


\begin{theorem}\label{thm:HTC_gf}
    Let the generating function $R[\mu]$ be the unique formal power series solution satisfying $R[\mu] = \int\rmd\mu(L) + O(\mu^2)$ to
    \begin{equation}\label{eq:Zdef}
    Z(R[\mu];\mu] = 0,\qquad Z(r;\mu]\coloneqq \frac{\sqrt{r}}{\sqrt{2}\pi}J_1(2\pi\sqrt{2r}) -  \int\rmd\mu(L)\,I_0(L\sqrt{2r})
    \end{equation}
    where $I_0$ and $J_1$ are (modified) Bessel functions. 
    
    The generating function of half-tight cylinders is given by 
    \begin{align}
        H(L_1,L_2;\mu] = \sum_{k=0}^\infty \frac{2^{-k} R[\mu]^{k+1}}{k!(k+1)!}(L_2^2-L_1^2)^{k}.
    \end{align}
\end{theorem}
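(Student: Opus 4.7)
The plan is to combine Theorem~\ref{thm:HTC_bij} with a decomposition of each tree around its distinguished boundary vertex $\bvertex_2$. Applying the bijection,
\begin{align*}
H(L_1,L_2;\mu] \;=\; \sum_{p\geq 1}\frac{1}{p!}\sum_{\tree\in\treeset^{\mathrm{all,HTC}}_{p+2}} \int \prod_{i=3}^{p+2}\rmd\mu(L_i)\,\operatorname{Vol}(\mathcal{A}_\tree(\mathbf{L})).
\end{align*}
Only top-dimensional polytopes contribute, so one may restrict to trees in which every $\vertex\in\vsetinner(\tree)$ is trivalent and every boundary corner is non-ideal. The polytope \eqref{eq:polytopedef} then factorizes into (i) an $\mathbf{L}$-independent polytope in the angle variables $\varphi$, (ii) the distinguished double-simplex $\Delta_{d}^{((L_2-L_1)/2)}\times\Delta_{d}^{((L_2+L_1)/2)}$ at $\bvertex_2$ (of degree $d:=\deg(\bvertex_2)$), and (iii) a symmetric double-simplex of size $L_\bvertex/2$ at every other boundary vertex.

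Next, I would regard the $d$ edges incident to $\bvertex_2$ as the roots of $d$ independent sub-trees (branches). Integrating $\mu$ over the boundary vertices contained in the branches and summing over branch types yields, at each of the $d$ slots, the same generating function $\widetilde R[\mu]$. The key input is that $\widetilde R[\mu]=R[\mu]$: the root edge of a branch either ends at a $\mu$-weighted white boundary vertex, whose integrated simplex contribution sums (via the series expansion of $I_0$) to $\int\rmd\mu(L)\,I_0(L\sqrt{2\widetilde R})$, or at a trivalent red inner vertex, whose constrained angle integral together with the two further sub-branches sums to $\tfrac{\sqrt{\widetilde R}}{\sqrt 2\pi}J_1(2\pi\sqrt{2\widetilde R})$. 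Balance of the two contributions is precisely the equation $Z(\widetilde R;\mu]=0$, a recursion inherited from \cite{Budd2025_Tree_bij} where the same branches arise in the cusp setting.

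To conclude, the simplex at $\bvertex_2$ contributes $(L_2^2-L_1^2)^{d-1}/(4^{d-1}((d-1)!)^2)$, the branches contribute $R[\mu]^d$, and combining with the $2^{n-3}$ prefactor of $\operatorname{Vol}_\tree$ and the combinatorial weights from the cyclic-ordering convention at $\bvertex_2$ collapses the sum over $d$ into $\sum_{d\geq 1}R[\mu]^d(L_2^2-L_1^2)^{d-1}/(2^{d-1}(d-1)!\,d!)$, which is the claimed identity after setting $k=d-1$. The main obstacle is the bookkeeping of numerical constants — in particular confirming that the overall $2^{n-3}$ prefactor distributes so as to leave a clean factor $R[\mu]^d$ from the branches together with a residual $2^{d-1}/d$ at $\bvertex_2$; the essential analytic content is already captured by the branch recursion $Z(R;\mu]=0$.
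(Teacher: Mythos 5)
Your proposal follows essentially the same route as the paper: restrict to top-dimensional polytopes, decompose each tree around the special boundary vertex $\bvertex_2$ into $\deg(\bvertex_2)$ branches, identify the per-branch generating function with $R[\mu]$ via the cusped tree bijection of \cite{Budd2025_Tree_bij}, and sum over the degree of $\bvertex_2$. Your final bookkeeping $\sum_{d\geq 1}R^d(L_2^2-L_1^2)^{d-1}/(2^{d-1}(d-1)!\,d!)$ is correct, and this is exactly what the paper obtains after first establishing the explicit weight formula \eqref{eq:HTC_vol_from_tree} and comparing with \cite[Eq.~(66)]{Budd2025_Tree_bij}.

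Two caveats on your sketch, both concerning the step you flag as the ``main obstacle.'' First, the angle polytope does not split as a product over inner vertices because of the Delaunay constraint $\varphi(\vec{\edge})+\varphi(\cev{\edge})<\pi$, which couples adjacent inner vertices; the paper handles this (following \cite[Section~4]{Budd2025_Tree_bij}) by an inclusion--exclusion that replaces Delaunay by anti-Delaunay conditions and contracts anti-Delaunay subtrees, producing the effective inner-vertex weights $\gamma_{\deg\vertex-1}$. In particular your reading of $Z(R;\mu]=0$ as a ``balance'' between a boundary-vertex term $\int\rmd\mu\,I_0$ and a trivalent inner-vertex term $\frac{\sqrt{R}}{\sqrt{2}\pi}J_1$ is not accurate: the $J_1$ emerges only after summing the $\gamma_k$'s over contracted anti-Delaunay clusters of inner vertices, not from a single trivalent vertex, and the structure is a fixed-point equation for $R$, not a two-term balance. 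Second, on the constants: the paper resolves the $2^{n-3}$ bookkeeping by first packaging the boundary and inner contributions into the normalized weights $t_k$, $\tilde t_k$, $\gamma_k$ (with the prefactor $\tfrac14$), so that each branch contributes $2R[\mu]$ and $\bvertex_2$ contributes $\tilde t_{k}(L_2,L_1)/(k!\,(k+1))$; if you instead insist each branch contributes $R[\mu]$ you indeed need the residual $2^{d-1}/d$ at $\bvertex_2$, which works out, but this requires verifying precisely the distribution of powers of two that you left unchecked. None of this changes the conclusion, but it is the content that makes the argument complete.
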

    Note that this result has already been proven in \cite{Budd2024_Top_rec}, but this is done by solving algebraic equations, instead of a bijective proof like done here.

\subsection{General planar surfaces}
We now introduce the set of bicolored \emph{double} plane trees $\dubtree=(\tree_1,\tree_2)\in\treeset^{\mathrm{all,double}}_n$, each with identical restrictions as before, except that there are in total $n$ boundary vertices with labels $1,\ldots,n$, partitioned such that $b_1\in\vsetboundary(\tree_1)$ and $b_2\in\vsetboundary(\tree_2)$ and each tree has at least two boundary vertices.

To a $\dubtree\in\treeset^{\mathrm{all,double}}_n$ we associate the \emph{full polytope} $\mathcal{A}_{\dubtree}(\mathbf{L})$: 
\begin{align}
	\mathcal{A}_{\dubtree}(\mathbf{L}) &=
    \left\{
        \left(\ell,\tau,
        (w_{\bvertex_1,i})_{i=1}^{\deg(\bvertex_1)},
        (v_{\bvertex_1,i})_{i=1}^{\operatorname{nonid}(\bvertex_1)},
        (w_{\bvertex_2,i})_{i=1}^{\deg(\bvertex_2)},
        (v_{\bvertex_2,i})_{i=1}^{\operatorname{nonid}(\bvertex_2)}\right) 
        \in \R^{d_{12}}
    \middle| 
    \begin{array}{l}
        0\leq\tau<\ell<\min(L_1,L_2)\\
        \mathbf{w}_{\bvertex_i}\in \Delta_{\deg(\bvertex_i)}^{((L_i-\ell)/2)}\\
        \mathbf{v}_{\bvertex_1}\in \Delta_{\operatorname{nonid}(\bvertex_i)}^{((L_i+\ell)/2)} 
    \end{array}\right\}
    \nonumber\\&\qquad
    \times \left\{ \varphi : \vec{\edgeset}(\dubtree) \to \R_{\geq 0} \middle|
    \begin{array}{l}
        \varphi(\vec{\edge}_{\bvertex,i}) = 0\text{ for }\bvertex\in\vsetboundary(\dubtree), 1\leq i\leq\deg(\bvertex),\\
        \varphi(\vec{\edge}_{\vertex,i}) > 0\text{ for }\vertex\in \vsetinner(\dubtree),  1\leq i\leq\deg(\vertex),\\
        \sum_{i=1}^{\deg(\vertex)}\varphi(\vec{\edge}_{\vertex,i}) = \pi\text{ for }\vertex\in \vsetinner(\dubtree), \\ 
        \varphi(\vec{\edge}) + \varphi(\cev{\edge}) < \pi\text{ for }\edge\in \edgeset(\dubtree)
    \end{array}\right\} \nonumber\\&\qquad
    \times \prod_{\bvertex\in \vsetboundary(\tree)\setminus\{\bvertex_1,\bvertex_2\}} \left[\Delta_{\deg(\bvertex)}^{(L_{\bvertex}/2)} \times \Delta_{\operatorname{nonid}(\bvertex)}^{(L_{\bvertex}/2)}\right],
    \label{eq:Full_polytopedef}
\end{align}
where $d_{12}=2+\deg(\bvertex_1)+\operatorname{nonid}(\bvertex_1)+\deg(\bvertex_2)+\operatorname{nonid}(\bvertex_2)$.

Furthermore, the top-dimensional polytopes again correspond to the trees 
where all inner vertices are of degree $3$ and all corners are marked as non-ideal. 
On the top-dimensional full polytopes, we have a similar volume measure given by $2^{n-4}$ times the $(2n-6)$-dimensional Lebesgue measure
\begin{align}
	\operatorname{Vol}_\tree = 2^{n-4}\,\rmd\ell\rmd\tau \prod_{\vertex\in \vsetinner(\tree)}\rmd\varphi(\vec{\edge}_{\vertex,1})\rmd\varphi(\vec{\edge}_{\vertex,2}) \prod_{\bvertex\in \vsetboundary(\tree)} \prod_{j=1}^{\deg(\bvertex)-1}\rmd w_{\bvertex,j}\rmd v_{\bvertex,j}.\label{eq:measure_full_polytope}
\end{align}

We get the following result.
\begin{theorem}\label{thm:Full_bij}
    Given $\mathbf{L}=(L_1,\dots,L_n)$ with $L_1< L_2$, let $\mathcal{M}^\circ_{0,n}(\mathbf{L})\subset\mathcal{M}_{0,n}(\mathbf{L})$ be the full WP-measure subset, where there is a unique shortest geodesic separating the first two boundaries, possibly being one of the boundaries itself.
    Then there is a bijection
    \begin{align}
        \spinebijection:\mathcal{M}^\circ_{0,n}(\mathbf{L})\to \left(\bigsqcup_{\tree \in \treeset^{\mathrm{all,HTC}}_n} \mathcal{A}_{\tree}(\mathbf{L})\right)\sqcup\left(\bigsqcup_{\dubtree \in \treeset^{\mathrm{all,full}}_n} \mathcal{A}_{\dubtree}(\mathbf{L})\right).
    \end{align}
    Moreover, the push-forward of the Weil--Petersson measure is the measure $\operatorname{Vol}_\tree$ on the top-dimensional half-tight and full polytopes.
\end{theorem}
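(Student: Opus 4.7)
The plan is to split $\mathcal{M}^\circ_{0,n}(\mathbf{L})$ according to whether the unique shortest separating geodesic is the boundary $\partial_1$ itself or an interior simple closed geodesic, apply Theorem~\ref{thm:HTC_bij} to the pieces, and combine with Mirzakhani's twist–length decomposition of the Weil--Petersson measure. Concretely, let $\mathcal{M}^\circ_{0,n}(\mathbf{L})=\mathcal{M}^\circ_\partial\sqcup\mathcal{M}^\circ_{\mathrm{int}}$, where on $\mathcal{M}^\circ_\partial$ the shortest separating curve is $\partial_1$ and on $\mathcal{M}^\circ_{\mathrm{int}}$ it is an interior geodesic $\gamma$ of length $\ell$. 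On $\mathcal{M}^\circ_\partial$, the hypothesis $L_1<L_2$ together with strict tightness of $\partial_1$ identifies this subset with $\mathcal{H}_n(\mathbf{L})$, and Theorem~\ref{thm:HTC_bij} directly gives the bijection and measure statement onto $\bigsqcup_{\tree}\mathcal{A}_\tree(\mathbf{L})$.

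For the interior piece, I would canonically cut $X$ along the unique shortest separating geodesic $\gamma$. This produces two cylindrical surfaces $X_1$ and $X_2$, where $X_i$ carries $\partial_i$ as one boundary and a copy of $\gamma$ of length $\ell$ as the other. Strict minimality of $\gamma$ among separating curves of $X$ translates exactly into strict tightness of the cut boundary in each half, so $X_i\in\mathcal{H}_{n_i}(\ell,L_i,\mathbf{L}_{S_i})$ for some partition $\{3,\ldots,n\}=S_1\sqcup S_2$ with $n_1+n_2=n+2$. Recording the gluing twist $\tau\in[0,\ell)$ produces an isomorphism
\begin{align*}
    \mathcal{M}^\circ_{\mathrm{int}}\;\cong\;\bigsqcup_{S_1\sqcup S_2=\{3,\dots,n\}}\;\int_0^{L_1}\rmd\ell\int_0^\ell\rmd\tau\;\mathcal{H}_{n_1}(\ell,L_1,\mathbf{L}_{S_1})\times\mathcal{H}_{n_2}(\ell,L_2,\mathbf{L}_{S_2}),
\end{align*}
and Mirzakhani's twist–length formula gives the corresponding factorization of the Weil--Petersson measure as $\rmd\ell\,\rmd\tau\,\rmd\mathrm{WP}_1\,\rmd\mathrm{WP}_2$.

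Applying Theorem~\ref{thm:HTC_bij} independently to $X_1$ and $X_2$ yields plane trees $(\tree_1,\tree_2)$ where $\bvertex_i\in\vsetboundary(\tree_i)$ plays the role of the distinguished ``second boundary'' in its half, so that its associated simplices have sizes $(L_i-\ell)/2$ and $(L_i+\ell)/2$. Concatenating this data with $(\ell,\tau)$ gives exactly an element of $\mathcal{A}_{\dubtree}(\mathbf{L})$ for the double tree $\dubtree=(\tree_1,\tree_2)\in\treeset^{\mathrm{all,double}}_n$, matching the definition~\eqref{eq:Full_polytopedef} factor by factor. The requirement that each $\tree_i$ has at least two boundary vertices corresponds to the fact that a half-tight cylinder must have $n_i\geq 3$, since two homotopic geodesic boundaries in a hyperbolic annulus would be forced to have equal length. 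For the measure, since the HTC bijection carries $\rmd\mathrm{WP}$ to $\operatorname{Vol}_{\tree_i}=2^{n_i-3}\prod\rmd\varphi\prod\rmd w\,\rmd v$, the product of the two push-forwards times $\rmd\ell\,\rmd\tau$ gives a prefactor $2^{(n_1-3)+(n_2-3)}=2^{n-4}$, which is exactly the prefactor in $\operatorname{Vol}_{\dubtree}$ in~\eqref{eq:measure_full_polytope}.

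I expect the main obstacle to lie not in the geometric decomposition itself, which is essentially Mirzakhani's cutting construction applied to the unique extremal curve, but in verifying the combinatorial identification cleanly: one must check that the tightness condition on $\gamma$ in the ambient surface $X$ is \emph{equivalent} to tightness of the cut boundary in \emph{each} half separately (rather than merely implied by it), so that no half-tight cylinders are over- or under-counted in the disjoint union. This reduces to observing that any separating simple closed geodesic in $X_i$ extends to one in $X$ of the same length, and conversely that any separating geodesic in $X$ different from $\gamma$ either crosses $\gamma$ (in which case it is strictly longer by the collar lemma applied to $\gamma$ and the minimality of $\gamma$) or lies in a single half. The latter verification, together with the bookkeeping of labels and the factors of $2$ above, is where most of the work sits; the analogous decomposition was carried out for maps in \cite[Section~4.4]{Bouttier2024_OnQuasi_BGM_C}, and the same structural argument should transport to the hyperbolic setting.
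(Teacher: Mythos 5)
Your overall route is the same as the paper's: split according to whether the unique shortest separating geodesic is $\partial_1$ itself or an interior curve, apply Theorem~\ref{thm:HTC_bij} to the half-tight cylinder(s), record $(\ell,\tau)$, and use the twist--length factorization of the Weil--Petersson measure; your bookkeeping (the partition $S_1\sqcup S_2$, the simplex sizes $(L_i\mp\ell)/2$, the prefactor $2^{(n_1-3)+(n_2-3)}=2^{n-4}$) matches the paper, and the verification you flag as the main remaining work is exactly the paper's Lemma~\ref{lem:shortest_in_reglue}, which guarantees that re-gluing two half-tight cylinders produces a surface whose glued geodesic is the \emph{unique shortest} separating curve, so that the gluing map really inverts the cutting map.

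The one genuine flaw is your proposed justification of the crossing case: the collar lemma does not show that a separating curve crossing $\gamma$ is strictly longer than $\gamma$, since the collar half-width $\operatorname{arcsinh}(1/\sinh(\ell/2))$ tends to $0$ as $\ell$ grows, so the resulting lower bound can be far below $\ell$. The correct mechanism (and the paper's, in Lemma~\ref{lem:unique_split} and Lemma~\ref{lem:shortest_in_reglue}, Figure~\ref{fig:crossing_short_curves}) is cut-and-paste surgery: if a separating curve $\alpha$ with $\ell(\alpha)\le\ell(\gamma)$ crosses $\gamma$, combining arcs of $\alpha$ and $\gamma$ produces two separating curves, one on each side, of total length at most $\ell(\alpha)+\ell(\gamma)$, so at least one has length $\le\ell(\gamma)$ and (after tightening, or by strict tightness of the cut boundary in its half) yields a contradiction with minimality. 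Replacing your collar-lemma parenthesis by this surgery argument closes the gap. Two minor omissions: the theorem also asserts that $\mathcal{M}^\circ_{0,n}(\mathbf{L})$ has full WP-measure, which needs a separate (short) argument that the non-uniqueness locus has codimension one, again via the fact that two shortest separating geodesics cannot cross; and your reason for $n_i\ge 3$ should be that a hyperbolic cylinder with two closed geodesic boundaries does not exist, not that the two boundaries would have equal length.
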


This theorem allows us to compute the Weil--Petersson volumes of planar hyperbolic surfaces, using a recursion in the number of boundaries:
\begin{theorem} \label{thm:recursion}
    Let $(f_3,f_4,\cdots)$ be the sequence of polynomials 
    \begin{align}
        f_n(\hat{t}_0,\cdots,\hat{t}_{n-3};1/\hat{\gamma}_1,\hat{\gamma}_2,\cdots,\hat{\gamma}_{n-2})\in \Z[\hat{t}_0,\cdots,\hat{t}_{n-3},1/\hat{\gamma}_1,\hat{\gamma}_2,\cdots,\hat{\gamma}_{n-2}],
    \end{align}
    defined recursively by 
    \begin{align}
        f_3(\hat{t}_0;1/\hat{\gamma}_1)=-\frac{\hat{t}_0^3}{\hat{\gamma}_1}
    \end{align} 
    and for $n\geq3$
    \begin{align}\label{eq:f_rec}
        &f_{n+1}(\hat{t}_0,\cdots,\hat{t}_{n-2};1/\hat{\gamma}_1,\hat{\gamma}_2,\cdots,\hat{\gamma}_{n-1})=\nonumber\\&\qquad\qquad
        \sum_{k=0}^{n-3}\left[
            \hat{t}_{k+1}\left(\pdv{\hat{\gamma}_{k+1}}-\frac{\hat{t}_0}{\hat{\gamma}_1}\pdv{\hat{t}_k}\right) 
            -\hat{\gamma}_{k+2}\frac{\hat{t}_0}{\hat{\gamma}_1}\pdv{\hat{\gamma}_{k+1}}
        \right] f_n(\hat{t}_0,\cdots,\hat{t}_{n-3};1/\hat{\gamma}_1,\hat{\gamma}_2,\cdots,\hat{\gamma}_{n-2}).
    \end{align}
\\\\
    Then the $n$-th term of the generating function of genus-$0$ Weil--Petersson volumes is given by
    \begin{align}
        n![x^n]F_0(x\mu)=\int\rmd\mu(L_1)\cdots\rmd\mu(L_n)V_{0,n}(L_1,\cdots,L_n)=
        \frac{1}{8} f_n(t_0[\mu],\cdots,t_{n-3}[\mu];1/\gamma_1,\gamma_2,\cdots,\gamma_{n-2}),
    \end{align}
    where
    \begin{align}
        t_k[\mu]=\int \rmd\mu(L)\, 2\frac{L^{2k}}{4^k k!}
    \end{align}
    and 
    \begin{align}
        \gamma_k=(-1)^k \frac{\pi^{2k-2}}{(k-1)!}.
    \end{align}
\end{theorem}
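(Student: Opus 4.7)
The plan is to use the bijection of Theorem~\ref{thm:Full_bij} combined with the explicit form of the half-tight generating function in Theorem~\ref{thm:HTC_gf} to express
\[V_n[\mu] := \int \rmd\mu(L_1)\cdots \rmd\mu(L_n)\, V_{0,n}(\mathbf{L})\]
in closed form as a polynomial in $R[\mu]$ and the moments $t_k[\mu]$, and then to deduce the recursion~\eqref{eq:f_rec}. Starting from Theorem~\ref{thm:Full_bij} and using the symmetry of $V_{0,n}$ in the boundary labels, I would split $V_n[\mu]$ into a half-tight contribution and a full contribution. The half-tight contribution integrates $H(L_1,L_2;\mu]$ against $\rmd\mu(L_1)\,\rmd\mu(L_2)$, summed over the choice of the tight boundary. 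The full contribution comes from configurations with a unique interior shortest separating geodesic of length $\ell$ and twist $\tau$, and thus arises as an integral over $0\le \tau<\ell$ of a convolution of two half-tight generating functions summed over bipartitions of the remaining boundary labels. Both integrals are evaluated in closed form by inserting the explicit expression $H(L_1,L_2;\mu] = \sum_{k\ge 0}\frac{2^{-k}R[\mu]^{k+1}}{k!(k+1)!}(L_2^2-L_1^2)^k$.

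The next step is to re-express the result in terms of $t_k[\mu]$ and the formal parameters $\gamma_k$ via the implicit equation for $R[\mu]$. Expanding the Bessel functions gives
\[
Z(r;\mu] = -\sum_{k\geq 0}\frac{(2r)^k}{k!}\left[\frac{\gamma_{k+1}\, r}{k+1} + \frac{t_k[\mu]}{2}\right],
\]
with $\gamma_{k+1} = (-1)^{k+1}\pi^{2k}/k!$. Treating the $\gamma_k$'s as formal parameters, the implicit function theorem makes $R[\mu]$ a formal power series in the $t_k[\mu]$'s with coefficients in $\Z[1/\hat\gamma_1, \hat\gamma_2, \ldots]$. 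Substituting back into the closed form of the previous paragraph then produces a polynomial of the prescribed shape $\tfrac{1}{8} f_n(t_0[\mu],\ldots,t_{n-3}[\mu];\, 1/\gamma_1, \gamma_2, \ldots, \gamma_{n-2})$. The base case $n=3$ is verified directly from $V_{0,3}\equiv 1$: one has $V_3[\mu]=\bigl(\int \rmd\mu\bigr)^3 = (t_0/2)^3 = \tfrac{1}{8}\bigl(-t_0^3/\gamma_1\bigr)$, matching $f_3=-\hat{t}_0^3/\hat{\gamma}_1$ since $\gamma_1=-1$.

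To obtain the recursion~\eqref{eq:f_rec} itself, I would compute the implicit derivatives of $R$ from $Z(R;\mu]=0$, namely $\partial_{t_k} R = \tfrac{(2R)^k}{2\,k!}/(-\partial_R Z)$ and $\partial_{\gamma_{k+1}} R = \tfrac{2^k R^{k+1}}{(k+1)!}/(-\partial_R Z)$, and use them to translate the combinatorial operation of adding a new boundary vertex to one of the (double) trees into a differential operator acting on the closed-form expression for $V_n[\mu]$. The main obstacle is precisely this translation: one must match the new labelled corners, new edges, and the way $R[\mu]$ propagates through the formula to the specific algebraic form of the operator in~\eqref{eq:f_rec}, including the characteristic factor $\hat{t}_0/\hat{\gamma}_1$. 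Since Theorem~\ref{thm:recursion} has already been established algebraically in~\cite{Budd2024_Top_rec}, a shorter alternative closing step is to verify that the closed-form expression from the first two paragraphs satisfies the generating function identities derived there, thereby inheriting the recursion for free.
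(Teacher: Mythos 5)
Your first two paragraphs are sound as far as they go (the expansion of $Z$ in terms of $\gamma_{k+1}$ and $t_k[\mu]$ is correct, and the base case $n=3$ checks out), but the proof has a genuine gap exactly where you flag "the main obstacle": the recursion \eqref{eq:f_rec} is never actually derived. The theorem is not just an evaluation statement; to verify that your closed form in $R[\mu]$ and $t_k[\mu]$ equals $\tfrac18 f_n$ for the \emph{recursively defined} polynomials $f_n$, you must establish identities in which $\hat t_k$ and $\hat\gamma_k$ are independent formal variables, since the recursion differentiates with respect to both. Your construction only produces a number (or a function of the moments $t_k[\mu]$) at the fixed values $\gamma_k=(-1)^k\pi^{2k-2}/(k-1)!$; promoting the $\gamma$'s to formal weights, tracking how they enter through $R[\mu]$ and through the half-tight generating function, and then matching the effect of "adding an $(n{+}1)$-th boundary" to the specific operator $\sum_k\bigl[\hat t_{k+1}(\partial_{\hat\gamma_{k+1}}-\tfrac{\hat t_0}{\hat\gamma_1}\partial_{\hat t_k})-\hat\gamma_{k+2}\tfrac{\hat t_0}{\hat\gamma_1}\partial_{\hat\gamma_{k+1}}\bigr]$ is the entire content of the proof, and it is missing. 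The fallback of citing \cite{Budd2024_Top_rec} does not close the gap either: the stated purpose of this theorem in the paper is a direct proof independent of that algebraic derivation, and even as a citation argument you would still have to verify that your closed form coincides with the generating-function identities there, which you have not done.

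For contrast, the paper does not pass through $R[\mu]$ at all at this stage. It first reduces $V_{0,n}(\mathbf{L})$, for fixed boundary lengths, to the weighted sum \eqref{eq:vol_weights_final} over pairs of combinatorial trees $\dubtree\in\treeset^{2\leftrightarrow3}_n$: this uses the explicit $\ell$-integral
\begin{align}
\int_{0}^{\infty}\ell\,\rmd\ell\,\tilde t_a(L_1,\ell)\,\tilde t_b(L_2,\ell)=2\sum_{m=0}^{b}(-1)^m t_{a+1+m}(L_1)\,t_{b-m}(L_2),
\end{align}
followed by a telescoping cancellation in $m$ between double trees according to whether $\bvertex_1$ and $\bvertex_3$ lie in the same tree. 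With the weights $t_k$, $\gamma_k$ now attached vertex-by-vertex (so effectively formal), Proposition~\ref{prop:tree_gf} identifies $f_n$ as the generating polynomial of these tree pairs by induction, the five ways of inserting a new boundary vertex producing exactly the five terms of \eqref{eq:f_rec}; the theorem then follows by integrating against $\mu$. If you want to salvage your route, you would need to redo your computation with formal $\hat\gamma_k$ throughout and carry out the operator-matching step explicitly — at which point you would essentially be reproducing the combinatorial induction of Proposition~\ref{prop:tree_gf} in analytic disguise.
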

In fact, this theorem is a special case of \cite[Theorem~4]{Budd2024_Top_rec},%
\footnote{Which is itself equivalent to \cite[Proposition~3.5]{Bouttier2024_Enumeration_BGM_D}, which is a similar recursion for tight maps.} 
although this is hard to recognize, due to different notation. 
In any case, in \cite{Budd2024_Top_rec} this theorem is proven by solving algebraic equations, while here we will provide a more direct proof. 

\subsection{Outline}
After some short background on moduli spaces and Weil--Petersson volumes in section~\ref{sec:background}, we start generalizing the tree bijection of \cite{Budd2025_Tree_bij}. 
In section~\ref{sec:spine}, we use Busemann functions to construct the spine for a half-tight cylinder. 
Section~\ref{sec:tiling} completes the proof of the bijection for the half-tight cylinder.

Next, in section~\ref{sec:general}, we extend the tree bijection to general surfaces, by decomposing the surfaces into half-tight cylinders.
We use these results in section~\ref{sec:compute} where we use some tricks to simplify the computation of Weil--Petersson volumes. 
Finally, in section~\ref{sec:outlook}, we give some outlook.

\subsection{Acknowledgments}
This work is supported by the VIDI programme with project number VI.Vidi.193.048, which is financed by the Dutch Research Council (NWO).
Furthermore, the author wants to thank Timothy Budd for the fruitful cooperations on this topic.

\section{Background}\label{sec:background}
Let's start with recalling some definitions. 
Given a topological surface $S_{g,n}$ of genus $g$ and with $n$ punctures, we can look at the corresponding Teichm\"uller space $\mathcal{T}_{g,n}(L_1,\cdots,L_n)$, which can be defined as all hyperbolic metrics on $S_{g,n}$, such that the $n$ boundary curves are geodesics with lengths $L_1,\ldots,L_n$, modulo boundary- and orientation-preserving  isometries that are homotopic to the identity.
Alternatively, one can also mod out all boundary- and orientation-preserving  isometries, which gives the moduli space $\mathcal{M}_{g,n}(L_1,\cdots,L_n)=\mathcal{T}_{g,n}(L_1,\cdots,L_n)/\textrm{MCG}_{g,n}$, where $\textrm{MCG}_{g,n}$ is the so-called mapping class group.

The Teichm\"uller space comes equipped with a natural symplectic form $\omega_\textrm{WP}$, called the Weil--Petersson form.
There are multiple ways to define $\omega_\textrm{WP}$, but commonly it is computed using a pants decomposition and Fenchel--Nielsen coordinates.

A pants decomposition is a collection of disjoint simple closed curves $\Gamma=(\gamma_i)_{i=1}^{3g-3+n}$ on $S_{g,n}$, such that the complement only consists of pairs of pants, which are homeomorphic to $S_{0,3}$. 

It is shown that, given a pants decomposition $\Gamma$, the real numbers $(\ell_{\gamma_i}(\cdot),\tau_{\gamma_i}(\cdot))_{i=1}^{3g-3+n} \in (\R_+ \times \R)^{3g-3+n}$ are coordinates on $\mathcal{T}_{g,n}(L_1,\cdots,L_n)$, where $\ell_\gamma(X)$ is the length of the unique closed geodesic in $X$ homotopic to $\gamma$ and $\tau_\gamma(X)$ are the `twists' that one needs to apply when gluing the pairs of pants along $\gamma$.

Still with a fixed pants decomposition $\Gamma$, the Weil--Petersson form $\omega_\textrm{WP}$ is now easily expressed by \cite{Wolpert1983_symplectic}
\begin{align}
    \omega_\textrm{WP}=\sum_{\gamma\in\Gamma} \dd{\ell_\gamma}\wedge\dd{\tau_\gamma}.
\end{align}
Perhaps surprisingly, the result is independent on the choice of $\Gamma$ and is even invariant under the action of the mapping class group, which means it descends to the moduli space $\mathcal{M}_{g,n}(L_1,\cdots,L_n)$.
Since this space is $6g-6+2n$ dimensional, we get a natural volume measure on $\mathcal{M}_{g,n}(L_1,\cdots,L_n)$ given by
\begin{align}
    \mu_{\textrm{WP}}=\frac{\omega^{3g-3+n}}{(3g-3+n)!},
\end{align}
which is simply the Lebesgue measure on $(\ell_i,\tau_i)_{i=1}^{3g-3+n} \in (\R_+ \times \R)^{3g-3+n}$, where we used shorthand notation $\ell_i=\ell_{\gamma_i}(\cdot)$.
We call this measure the Weil--Petersson measure and we call the total volumes
\begin{align}
    V_{g,n}(L_1,\cdots,L_n)=\int_{\mathcal{M}_{g,n}(L_1,\cdots,L_n)}\mu_{\textrm{WP}}
\end{align}
the Weil-Petersson volumes. 

In \cite{Mirzakhani2006_SimpleGeodesics_TopRec}, Mirzakhani proves that they are rational homogeneous polynomials in $\pi^2$ and $L_i^2$, and she proves a recursion relation that allows for the computation of any Weil--Petersson volume.

\pagebreak[0]
The first few Weil--Petersson volumes are given by \cite{Do2011_ModuliSpaces_WPTable}

\nopagebreak
\begin{tabular}{lll}
    $g$&$n$&$V_{g,n}$\\
    \hline
    $0$&$3$&$1$\\
    $0$&$4$&$2\pi^2+\frac{1}{2}\sum_i L_i^2$\\
    $0$&$5$&$10\pi^4+3\pi\sum_i L_i^2+\frac{1}{8}\sum_i L_i^4+\frac{1}{2}\sum_{i<j}L_i^2L_j^2$\\
    $0$&$6$&$\frac{244}{3}\pi^6+26\pi^4\sum_i L_i^2+
    \frac{3}{2}\pi^2\sum_i L_i^4+6\pi^2\sum_{i<j}L_i^2L_j^2+
    \frac{1}{48}\sum_i L_i^6$\\ &&$\qquad+\frac{3}{16}\sum_{i\neq j} L_i^4L_j^2+\frac{3}{4}\sum_{i\neq j \neq k \neq i}L_i^2L_j^2L_k^2$\\
    \hline
    $1$&$1$&$\frac{1}{12}\pi^2+\frac{1}{48}L_i^2$\\
    $1$&$2$&$\frac{1}{4}\pi^4+\frac{1}{12}\pi^2\sum_iL_i^2+\frac{1}{192}\sum_iL_i^4+\frac{1}{96}L_1^2L_2^2$\\
    $1$&$3$&$\frac{14}{9}\pi^6+\frac{13}{24}\pi^4\sum_iL_i^2+\frac{1}{24}\pi^2\sum_iL_i^4+\frac{1}{8}\pi^2\sum_{i<j}L_i^2L_j^2+\frac{1}{1152}\sum_iL_i^6$\\ &&$\qquad+\frac{1}{192}\sum_{i\neq j} L_i^4L_j^2+\frac{1}{96}L_1^2L_2^2L_3^2$\\
    \hline
    $2$&$0$&$\frac{43}{2160}\pi^6$\\
    $2$&$1$&$\frac{29}{192}\pi^8+\frac{168}{2880}\pi^6L_1^2+\frac{139}{23040}\pi^4L_1^4+\frac{29}{138240}\pi^2L_1^6+\frac{1}{442367}L_1^8$
\end{tabular}

\nopagebreak
\vspace{1 em} In this paper, we will only focus on the planar case ($g=0$). 

\section{The spine on a half-tight cylinder} \label{sec:spine}
We are now ready to extend the arguments from \cite{Budd2025_Tree_bij} to a half-tight cylinder.

\subsection{The cylinder covering map}
\begin{figure}
    \centering
    \begin{subfigure}[t]{.3\textwidth}
        \centering
        \includegraphics[width=\textwidth]{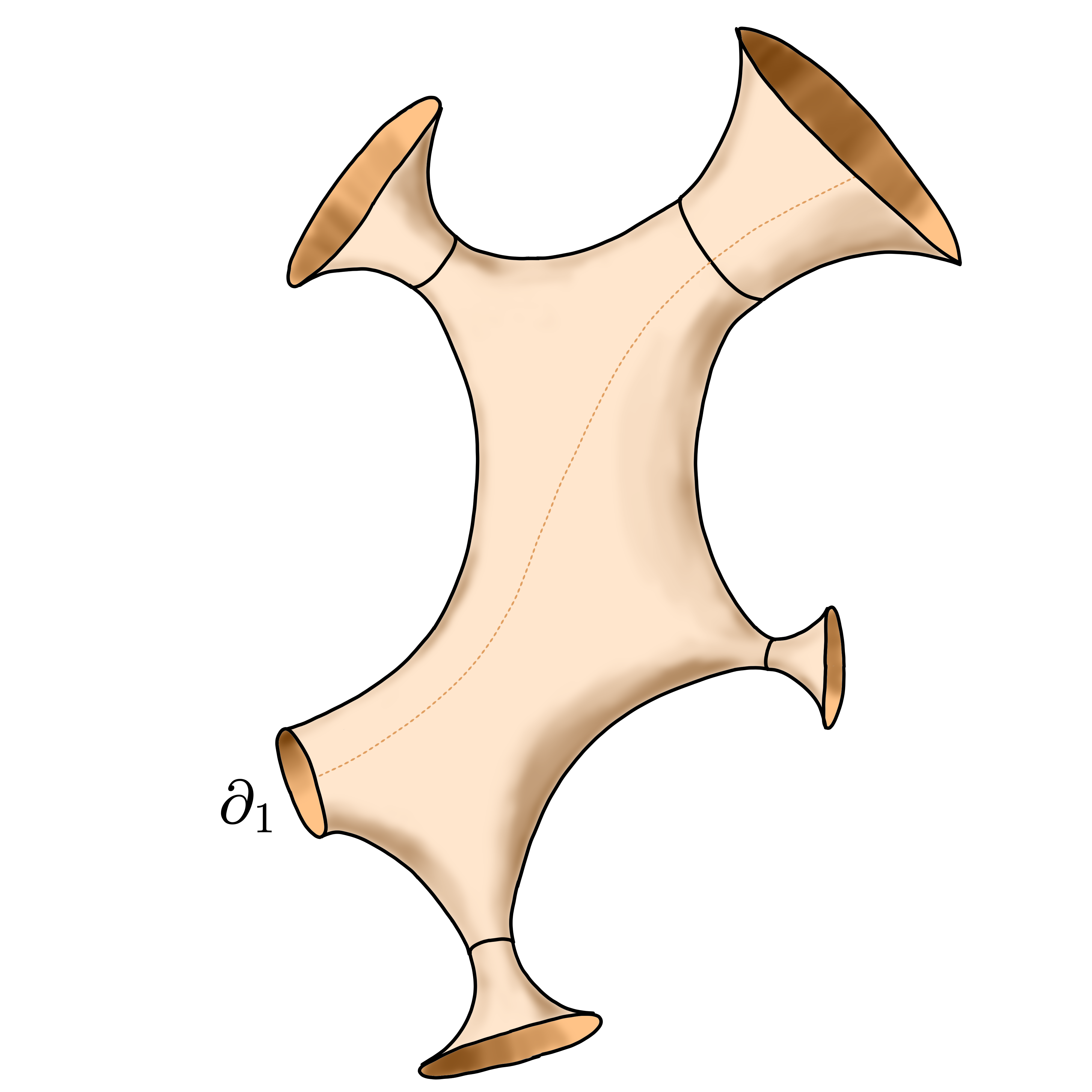}
        \caption{The extended surface $\check{X}$.}
    \end{subfigure}
    \begin{subfigure}[t]{.65\textwidth}
        \centering
        \includegraphics[width=\textwidth]{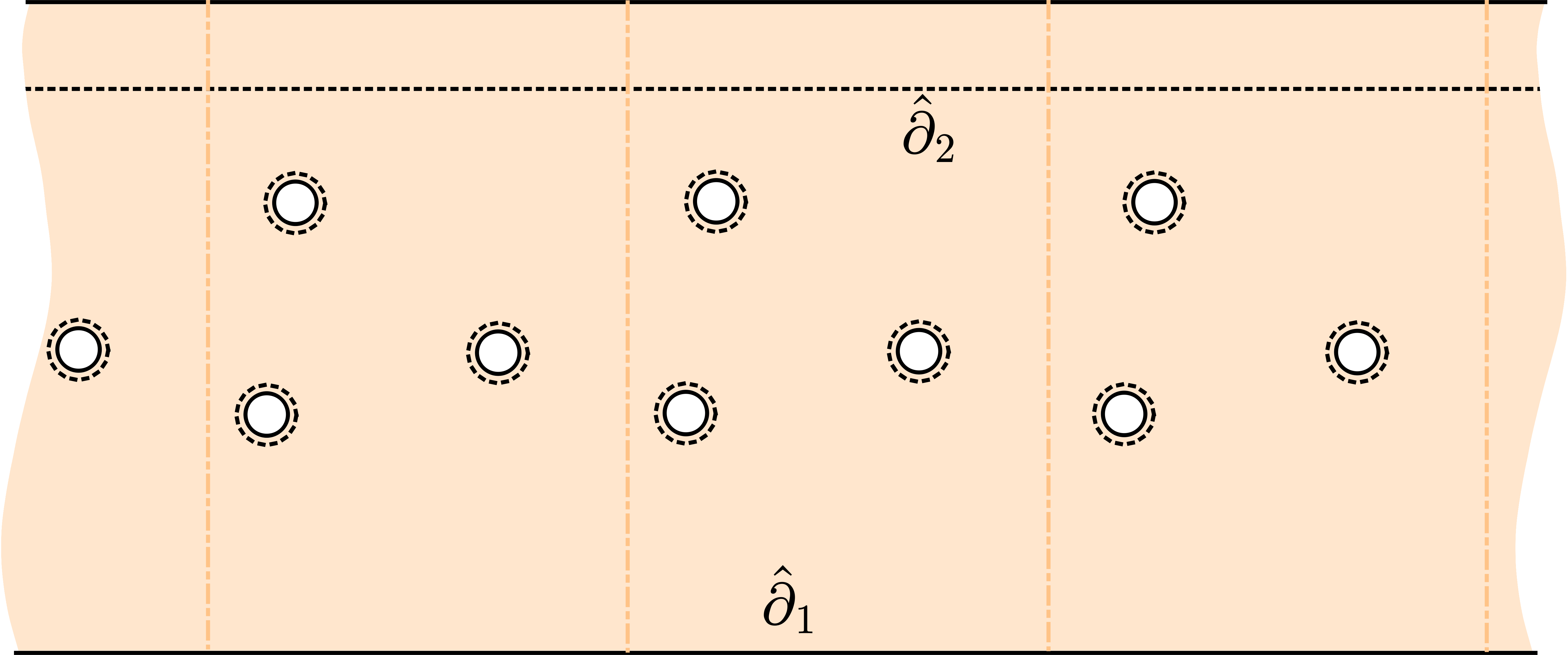}
        \caption{The cylinder cover $\hat{X}$. The black line below corresponds to $\partial_1$. The black dotted lines are the original boundaries in $X$, where the infinite dotted line is $\partial_2$. The region between two yellow lines is a fundamental domain.
        \\
        Note that this is only a qualitative depiction of the cylinder cover: geodesics do not necessarily correspond to straight lines.
        }
    \end{subfigure}
    \caption{}
\end{figure}

Consider a half-tight cylinder $X\in \mathcal{H}_n(\mathbf{L})$, with $\mathbf{L}\in\R_{>0}^n$. 
We extend $X$ to $\check{X}$ by gluing infinite trumpets to all boundaries, except to $\partial_1$.
We consider the cylinder covering space $\hat{X}$ with corresponding covering map $p:\hat{X}\rightarrow \check{X}$, which is the unique covering space, such that $\partial_1$ and $\partial_2$ are lifted to unique infinite geodesics $\hat{\partial}_1$ and $\hat{\partial}_2$ respectively, while all other boundary curves have an infinite number of lifts, each of the same length as in $X$.

Note that the group of deck transformations is isomorphic to $\Z$.
We call the generator of this group $A$.
Given a length-parametrization $\gamma(t)$ of $\hat{\partial}_1$, such that for increasing $t$ the inside of $\hat{X}$ is on the right-hand side, the action of this generator is given by $A\gamma(t)=\gamma(t+L_1)$.

We let $\hat{X}$ inherit the metric from $\check{X}$, and we denote its distance function by $d(\cdot,\cdot)$.

\begin{lemma}\label{lem:minimizing_geod}
    The boundary $\hat{\partial}_1$ of $\hat{X}$ is a minimizing geodesic.
    Specifically, let $\gamma(t)$ for $t\in\R$ be a length-parametrization of $\hat{\partial}_1$, such that for increasing $t$ the inside of $\hat{X}$ is on the right-hand side.
    Then 
    \begin{align}
        d(\gamma(t_1),\gamma(t_2))=|t_1-t_2| \textrm{ for all } t_1,t_2\in \R.
    \end{align}
\end{lemma}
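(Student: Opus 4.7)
The plan is to bound $d(\gamma(t_1),\gamma(t_2))$ both above and below by $|t_1-t_2|$. The upper bound is immediate because $\gamma$ is length-parametrized. For the lower bound, deck-invariance reduces to the case $t_1=0$, so consider $f(t)\coloneqq d(\gamma(0),\gamma(t))$, which is $1$-Lipschitz with $f(t)\leq t$. It therefore suffices to show $f(kL_1)=kL_1$ for every positive integer $k$, since then the Lipschitz estimate $f(t)\geq f(kL_1)-|t-kL_1|$ forces $f(t)=t$ for all $t\geq 0$, and $t<0$ follows by symmetry.

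I would establish $f(kL_1)=kL_1$ by contradiction. Suppose there is a path $\alpha$ in $\hat{X}$ from $\gamma(0)$ to $\gamma(kL_1)=A^k\gamma(0)$ of length $\ell<kL_1$. Its projection $p(\alpha)$ is a closed loop in $\check{X}$ of length $\ell$. Concatenating $\alpha$ with the reversed $\hat{\partial}_1$-arc of length $kL_1$ yields a loop in $\hat{X}$, so its projection has trivial image under the homomorphism $\pi_1(\check{X})\to\Z$ obtained by capping $\partial_3,\dots,\partial_n$ to make a topological cylinder. Since the reversed boundary arc projects to $\partial_1$ traversed $k$ times, $p(\alpha)$ itself has winding number $k$ around $\partial_1$. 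Applying the closest-point projection $\pi\colon\check{X}\to X$ that collapses each trumpet onto its core geodesic in Fermi coordinates — a distance non-increasing map fixing $X$ pointwise — yields a closed curve $\eta\coloneqq\pi(p(\alpha))$ in $X$ of length at most $\ell<kL_1$ with the same winding number $k$ around $\partial_1$.

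The contradiction then comes from the claim that every closed curve on $X$ with winding $k\geq 1$ around $\partial_1$ has length at least $kL_1$. After a small perturbation, $\eta$ has only finitely many transverse self-intersections; resolving each of them by the orientation-preserving smoothing operation preserves total length and produces a disjoint union of oriented simple closed curves whose signed windings sum to $k$. Since $X$ is topologically a cylinder, each simple closed curve has winding in $\{-1,0,+1\}$, so at least $k$ of the resolved curves are simple separating curves in $X$. Strict tightness of $\partial_1$ forces each such curve to have length at least $L_1$, so the total length of $\eta$ is at least $kL_1$, contradicting the length bound above.

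The main obstacle is justifying the smoothing step: one needs to confirm that $\eta$ may be taken to have only finitely many transverse self-intersections, that the chosen smoothings are length-preserving, and that the resulting simple curves carry the claimed signed windings. These are standard facts about immersed curves on surfaces, which I would either cite or sketch briefly rather than reprove in detail.
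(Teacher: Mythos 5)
Your proposal is correct and follows essentially the same route as the paper: assuming a shortcut exists, you project it to a closed curve in $\check{X}$ with winding number $k$ and length strictly less than $kL_1$, split it into at least $k$ separating loops, and contradict the strict tightness of $\partial_1$. The points where you differ — reducing to $t_2-t_1=kL_1$ via the $1$-Lipschitz estimate instead of closing up with an arc of $\hat{\partial}_1$, retracting the funnels onto $X$, and making the loop decomposition explicit by oriented smoothing — are technical refinements of steps the paper performs directly or leaves implicit (its ``natural decomposition'' into $n$ separating curves), not a different argument.
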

In terminology of \cite{papadopoulos2014metric}, $\gamma$ is a geodesic ray in $\hat{X}$.
\begin{proof}
    We will prove this by contradiction. 
    We start with the assumption that there exists a pair $t_1$, $t_2$, such that 
    \begin{align}
        d(\gamma(t_1),\gamma(t_2))<|t_1-t_2|.
    \end{align}
    Without loss of generality, we assume $t_1<t_2$.
    This means that there is a geodesic with length-parametrization $\tilde{\gamma}: [0,T]\rightarrow \hat{X}$, with $\tilde{\gamma}(0)=\gamma(t_1)$ and $\tilde{\gamma}(T)=\gamma(t_2)$ and $T<t_2-t_1$.
    
    \begin{figure}
        \centering
            \centering
            \includegraphics[width=.7\textwidth]{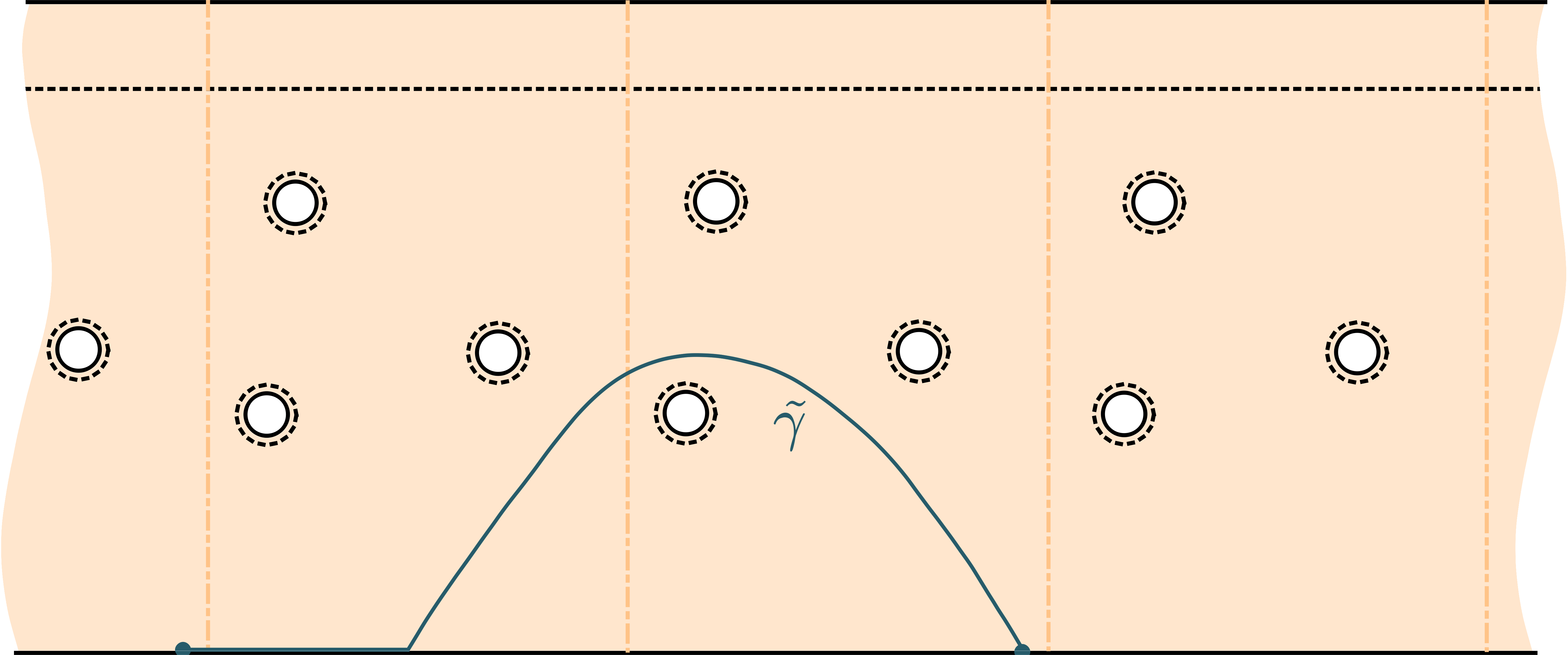}
            \caption{See Lemma~\ref{lem:minimizing_geod}. The blue curve is $\tilde{\gamma}$ concatenated with $\left.\gamma\right|_{[t_2,t_1+2 L_1]}$, which wraps twice around the cylinder.}
    \end{figure}

    We have $(n-1) L_1<t_2-t_1\leq n L_1$ for some $n=1,2,3,\cdots$. 
    We concatenate $\tilde{\gamma}$ with $\left.\gamma\right|_{[t_2,t_1+n L_1]}$. 
    The image of this concatenation under $p$ is a closed separating curve, which wraps $n$ times around the cylinder.
    It can be naturally decomposed into $n$ closed separating curves. 
    The total length is $T+t_1+n L_1-t_2<n L_1$, which means that at least one of the closed separating curves is shorter than $L_1$.
    This contradicts tightness of $\partial_1$, so $\hat{\partial_1}$ is minimizing.
\end{proof}
This is a crucial lemma, since it allows us to define a Busemann function on $\hat{X}$.
It is a well-known function in metric geometry (including hyperbolic geometry, but also for maps). 
See for example \cite{papadopoulos2014metric} for more information on Busemann functions and related objects.

\subsection{The Busemann function and asymptotically parallel geodesics}
We consider any parametrization $\gamma(t)$ of $\hat{\partial}_1$ from Lemma~\ref{lem:minimizing_geod}, such that for increasing $t$ the inside of $\hat{X}$ is on the right-hand side.
We can now define its \emph{Busemann function} $B_\gamma: \hat{X}\rightarrow \R$
\begin{align}
    B_\gamma(z)=\lim_{t\rightarrow\infty}\left(d(z,\gamma(t))-t\right).
\end{align}
Note that this limit is well-defined, since the argument is decreasing in $t$ and bounded from below due to the triangle inequality.
The Busemann function is clearly 1-Lipschitz.

The Busemann function depends on the choice of the base point $\gamma(0)\in\hat{\partial}_1$. 
In particular, when we choose $\gamma(t_0)$ as a new base-point, the Busemann function increases by $t_0$.
As a direct consequence, for different lifts of a point $z\in \check{X}$, the values of $B_\gamma(\hat{z})$ for $\hat{z}\in p^{-1}(z)$ differ by multiples of $L_1$, specifically\footnote{Assuming the correct orientation of $A$.} 
\begin{align}\label{eq:bus_deck}
    B_\gamma(Az)=B_\gamma(z) - L_1.
\end{align} 
This means that we cannot project the Busemann function to $\check{X}$, but locally in $\check{X}$, we can have a well-defined Busemann function, up to shifts of $L_1$.

We can consider \emph{asymptotically parallel geodesics} $\alpha_z: [0,\infty)\rightarrow\hat{X}$ to $\gamma$ starting at $z$. 
They can be defined as a limit of geodesics from $z$ to increasingly further points on $\gamma$, see \cite{papadopoulos2014metric}. 
We can define its (signed) length as
\begin{align}
    \ell_\gamma(\alpha_z)=\lim_{t\to\infty}(t+B_\gamma(\alpha(t))),
\end{align} 
which might be $+\infty$.

Since $\gamma$ is a geodesic ray, for each point $z\in\hat{X}$, there exists at least one co-ray $\zeta_z: [0,\infty)\rightarrow\hat{X}$, which is an asymptotically parallel geodesic to $\gamma$ starting at $z$ with minimal length $\ell_\gamma(\zeta_z)=B_\gamma(z)$. 
It is known that along its path the Busemann function with respect to $\gamma$ decreases with rate $1$, so for any $t_1,t_2\in[0,\infty)$, we have
\begin{align}\label{eq:co-ray_bus}
    B_\gamma(\zeta_z(t_1))-B_\gamma(\zeta_z(t_2))=t_2-t_1.
\end{align} 
See \cite{papadopoulos2014metric}.

\begin{lemma}
   Let $\zeta_{z_1}$ and $\zeta'_{z_2}$ be two co-rays to $\gamma$, and $p\circ\zeta_{z_1}$ and $p\circ\zeta'_{z_2}$ their projections to $\check{X}$.
   Assume that the co-rays are chosen such that the projections are not sub-curves of each other and such that the images in $\check{X}$ are not simple closed separating geodesics.
   
   Then, the projections $p\circ\zeta_{z_1}$ and $p\circ\zeta'_{z_2}$ are geodesics, starting at $p(z_1)$ and $p(z_2)$ respectively.
   Moreover, they do not have (self-)intersections in $\check{X}$, except for possibly their starting points.
\end{lemma}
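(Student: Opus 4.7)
The plan is to dispose of the geodesic assertion immediately and then reduce the no-intersection claim to a single fact about co-rays inside $\hat X$. The covering map $p$ is a local isometry (deck transformations are isometries of $\hat X$), so the images $p\circ\zeta_{z_1}$ and $p\circ\zeta'_{z_2}$ of the co-rays are geodesics in $\check X$ starting at $p(z_1)$ and $p(z_2)$, which handles the first assertion. For the interior-disjointness claim I would prove the following meeting-point lemma: if two co-rays $\zeta,\tilde\zeta$ to $\gamma$ in $\hat X$ satisfy $\zeta(s_1)=\tilde\zeta(s_2)=w$, then $\zeta$ and $\tilde\zeta$ parametrize the same geodesic, up to a time shift.

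To prove the meeting-point lemma, concatenate $\zeta|_{[0,s_1]}$ with $\tilde\zeta|_{[s_2,\infty)}$, reparametrized by arc length, into a single curve $c$. By (\ref{eq:co-ray_bus}) the Busemann function decreases at unit rate along each piece, and the matching value $B_\gamma(\zeta(s_1))=B_\gamma(\tilde\zeta(s_2))=B_\gamma(w)$ makes the two decreases compatible, so $B_\gamma(c(u))=B_\gamma(c(0))-u$ throughout. The $1$-Lipschitz property of $B_\gamma$ then forces $d(c(u_1),c(u_2))\geq |B_\gamma(c(u_1))-B_\gamma(c(u_2))|=|u_1-u_2|$, while the trivial curve-length bound gives the reverse inequality; hence $c$ is a minimizing geodesic, has no corner at $w$, and the tangents of $\zeta$ and $\tilde\zeta$ at $w$ agree. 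Uniqueness of geodesics then gives coincidence as far as both sides are defined.

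With this in hand, suppose $p\circ\zeta_{z_1}(t_1)=p\circ\zeta'_{z_2}(t_2)$ with at least one of $t_1,t_2>0$. Pick $k\in\Z$ with $\zeta'_{z_2}(t_2)=A^k\zeta_{z_1}(t_1)$. Because $A$ is an isometry, acts on $\gamma$ by a reparametrization, and shifts Busemann values by $-L_1$ via (\ref{eq:bus_deck}), the curve $\tilde\zeta_2:=A^{-k}\zeta'_{z_2}$ is again a co-ray to $\gamma$, and it meets $\zeta_{z_1}$ at $\zeta_{z_1}(t_1)$ in $\hat X$. The meeting-point lemma forces $\zeta_{z_1}$ and $\tilde\zeta_2$ to coincide up to a time shift, so $p\circ\zeta_{z_1}$ is a sub-curve of $p\circ\zeta'_{z_2}$ or vice versa, contradicting the hypothesis. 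For a putative self-intersection $p\circ\zeta_{z_1}(t_1)=p\circ\zeta_{z_1}(t_2)$ with $0\leq t_1<t_2$ not both zero, the same construction yields $\zeta_{z_1}(s+T)=A^k\zeta_{z_1}(s)$ for $T:=t_2-t_1$ and some $k\neq 0$ (the case $k=0$ is ruled out since the unit-rate decrease of $B_\gamma$ along $\zeta_{z_1}$ makes $\zeta_{z_1}$ injective). Hence $p\circ\zeta_{z_1}$ is periodic, and its image, taken over the minimal positive period, is a simple closed geodesic wrapping non-trivially around the cover, i.e., a simple closed separating geodesic in $\check X$, ruled out by hypothesis. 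The argument for $p\circ\zeta'_{z_2}$ is identical.

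The main delicate step is the meeting-point lemma itself, and more specifically verifying that the Busemann values of $\zeta$ and $\tilde\zeta$ match at $w$ so that the unit-rate-of-decrease property genuinely extends across the junction; once this compatibility is in place, the remainder is a clean combination of the $1$-Lipschitz bound, the curve-length estimate, and the standard uniqueness property of geodesics. A secondary bookkeeping point is the minimal-period reduction in the self-intersection case: the set of periods of $\zeta_{z_1}$ is a closed subgroup of $\R$, so a smallest positive period exists, and any further self-intersection within one such period would contradict its minimality, ensuring the resulting closed geodesic is simple.
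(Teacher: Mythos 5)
Your proof is correct, and it rests on the same two ingredients as the paper's argument — the unit-rate decrease \eqref{eq:co-ray_bus} along co-rays and the $1$-Lipschitz property of $B_\gamma$ — but it is organized around a different key step. The paper works directly at a putative crossing point in $\hat{X}$: going backwards along one co-ray and forwards along the other by $\epsilon$ produces a Busemann difference of exactly $2\epsilon$ at distance strictly less than $2\epsilon$ (strictness coming from transversality), contradicting $1$-Lipschitzness; tangential meetings are left implicit, being exactly the situations excluded by the sub-curve and closed-geodesic hypotheses. You instead prove a rigidity ("meeting-point") lemma: concatenating the two co-rays at a common point yields a curve along which $B_\gamma$ still decreases at unit rate, so the $1$-Lipschitz lower bound together with the length upper bound makes the concatenation a minimizing geodesic, hence corner-free, and uniqueness of geodesics forces the co-rays to coincide up to a time shift. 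This buys you a uniform treatment of transverse and tangential meetings, and it makes explicit the bookkeeping the paper compresses into "there exist lifts that (self-)intersect": you note that deck translates $A^{k}\zeta$ are again co-rays (via \eqref{eq:bus_deck}), reduce projection-level intersections and self-intersections to cover-level meetings with such translates, and then invoke the hypotheses (not sub-curves of each other, not simple closed separating geodesics) to conclude — including the periodic case $k\neq 0$, which the paper does not spell out in this lemma. The price is the appeal to the standard "minimizing implies no corner" and geodesic-uniqueness machinery, whereas the paper's transversality computation is shorter; your version is the more complete of the two.
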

\begin{proof}
    Firstly, since $\zeta_{z_1}$ and $\zeta'_{z_2}$ are geodesics, so are $p\circ\zeta_{z_1}$ and $p\circ\zeta'_{z_2}$.

    Furthermore, we note that it is sufficient to show that $\zeta_{z_1}$ and $\zeta'_{z_2}$ don't (self-)intersect, since if $p\circ\zeta_{z_1}$ and $p\circ\zeta'_{z_2}$ (self)-intersect, there exists lifts that (self-)intersect.

    Now a co-ray $\zeta_{z_1}$ cannot self-intersect, since it would violate equation \eqref{eq:co-ray_bus}.
    
    Now for intersections between $\zeta_{z_1}$ and $\zeta'_{z_2}$.
    Say $\zeta_{z_1}(t_1)=\zeta'_{z_2}(t_2)$ for some $t_1,t_2>0$. 
    For sufficient small $\epsilon$, we have 
    \begin{align}
        d\left(\zeta_{z_1}(t_1-\epsilon),\zeta'_{z_2}(t_2+\epsilon)\right)<2\epsilon,
    \end{align}
    since the intersection is transverse. On the other hand, using \eqref{eq:co-ray_bus} twice, we get
    \begin{align}
        B_\gamma\left(\zeta_{z_1}(t_1-\epsilon)\right)-B_\gamma\left(\zeta'_{z_2}(t_2+\epsilon)\right)=2\epsilon.
    \end{align}
    Together, this violates the fact that the Busemann function is 1-Lipschitz, so the co-rays cannot intersect.
\end{proof}
\begin{lemma}\label{lem:spiral_to_partial_1}
    Let $\zeta_{z_1}$ be a co-ray to $\gamma$, and $p\circ\zeta_{z_1}$ its projection to $\check{X}$.
    Assume that the co-ray is chosen such that the image in $\check{X}$ is not equal to $\partial_1$.
    
    Then, the projection $p\circ\zeta_{z_1}$ spirals towards $\partial_1$.
\end{lemma}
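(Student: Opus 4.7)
The plan is to split the argument into two stages: first show in the cylinder cover $\hat{X}$ that $d(\zeta_{z_1}(t), \hat{\partial}_1) \to 0$ as $t \to \infty$, and then translate this asymptotic statement to $\check{X}$ via the local isometry $p$ to identify the spiraling. For the first stage I would lift $\zeta_{z_1}$ and $\gamma$ to the universal cover of $\hat{X}$, which is a convex CAT$(-1)$ subset of the hyperbolic plane. By construction of a co-ray as a limit of minimizing geodesics from $z_1$ to $\gamma(t_n)$ for $t_n \to \infty$, the lift $\tilde{\zeta}_{z_1}$ is a hyperbolic geodesic ray sharing an ideal endpoint with a chosen lift $\tilde{\gamma}$ of $\gamma$. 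Two hyperbolic geodesics meeting at an ideal point converge exponentially, so $d(\tilde{\zeta}_{z_1}(t), \tilde{\gamma}) \to 0$, and the conclusion descends to $d(\zeta_{z_1}(t), \hat{\partial}_1) \to 0$ in $\hat{X}$.

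For the second stage, the local isometry property gives $d(p \circ \zeta_{z_1}(t), \partial_1) \leq d(\zeta_{z_1}(t), \hat{\partial}_1)$, so the projection eventually lies inside any small collar neighborhood of $\partial_1$. Inside such a collar (furnished by the collar lemma), any complete geodesic that does not exit is either $\partial_1$ itself or spirals around it. The projected co-ray cannot cross $\partial_1$, since this would require $\zeta_{z_1}$ to cross the boundary $\hat{\partial}_1$ of $\hat{X}$, and by assumption the image is not $\partial_1$; hence it must spiral. The winding number grows without bound, as can be seen from $B_\gamma(\zeta_{z_1}(t)) \to -\infty$ combined with the identity $B_\gamma(Az) = B_\gamma(z) - L_1$ from \eqref{eq:bus_deck}, which forces $\zeta_{z_1}(t)$ to pass through arbitrarily high powers of deck translates of any fixed fundamental domain.

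The main obstacle is the first stage, specifically the careful lifting argument, since $\hat{X}$ has both a geodesic boundary $\hat{\partial}_1$ and non-compact trumpet ends. A clean way to handle this is to double $\hat{X}$ along $\hat{\partial}_1$, obtaining a hyperbolic surface whose universal cover is a standard convex subset of the hyperbolic plane and in which $\hat{\partial}_1$ becomes an interior infinite geodesic; the classical exponential-convergence result for asymptotic hyperbolic geodesics then applies verbatim, and the conclusion descends back to $\hat{X}$ by reflection symmetry across $\hat{\partial}_1$.
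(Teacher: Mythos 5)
The decisive step in your argument is the sentence asserting that, because the co-ray is a limit of minimizing geodesics from $z_1$ to $\gamma(t_n)$, its lift to the universal cover ``shares an ideal endpoint with a chosen lift $\tilde{\gamma}$ of $\gamma$''. This is not justified, and it is where essentially all of the content of the lemma sits. The minimizing segments defining the co-ray are minimizing in $\hat{X}$, which is \emph{not} simply connected: the preimage of $\hat{\partial}_1$ in the universal cover consists of infinitely many disjoint geodesic lines, and the lift (from a fixed $\tilde{z}_1$) of the segment to $\gamma(t_n)$ ends on a component that depends on the homotopy class of that segment, hence may change with $n$. So there is no single ``chosen lift'' of $\gamma$ containing the lifted endpoints, and the ideal limit of these endpoints need not a priori be an endpoint of any lift of $\gamma$ at all --- it can perfectly well be an endpoint of a lift of some \emph{other} separating closed geodesic, in which case the co-ray spirals to that geodesic instead. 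Indeed, this is exactly what can happen if $\partial_1$ is not the strictly shortest separating curve; note that your proof never invokes the (strict) tightness of $\partial_1$, whereas the lemma is false without it, so the argument cannot be complete as written. The doubling trick at the end only addresses the convexity/boundary technicality, not this component-selection issue.

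The paper closes precisely this gap by a different route: the preceding lemma gives that the projection of the co-ray is simple, hence it is either a simple closed geodesic or accumulates on some simple closed geodesic $\alpha$, which must be separating since $B_\gamma$ decreases along the co-ray at unit rate \eqref{eq:co-ray_bus}. Strict tightness then forces $\alpha=\partial_1$: in the closed case the loop has length exactly $L_1$ by \eqref{eq:bus_deck} and \eqref{eq:co-ray_bus}; in the spiraling case, if $\ell_\alpha>L_1+\delta$, one finds two lifts $A\zeta_z(t')$ and $\zeta_z(t'+\ell_\alpha)$ at distance less than $\delta$ whose Busemann values differ by $\ell_\alpha-L_1>\delta$, contradicting the $1$-Lipschitz property of $B_\gamma$. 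If you want to salvage your two-stage plan, you would have to prove your Stage 1 claim by an argument of this type (using tightness); once the projection is known to spiral towards $\partial_1$, the convergence $d(\zeta_{z_1}(t),\hat{\partial}_1)\to 0$ follows, not the other way around.
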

\begin{proof}
    First note that, due to the previous lemma, the projection $p\circ\zeta_z$ is either a simple closed geodesic or a non-self-intersecting infinite geodesic. 
    
    In the first case, going around the closed curve once, reduces the Busemann function by $L_1$ according to \eqref{eq:bus_deck}, which means that the length of the curve is also $L_1$, due to \eqref{eq:co-ray_bus}. 
    Since $\partial_1$ is strictly tight, it is the only separating closed curve with this length. 
    
    \begin{figure}
        \centering
            \centering
            \includegraphics[width=.7\textwidth]{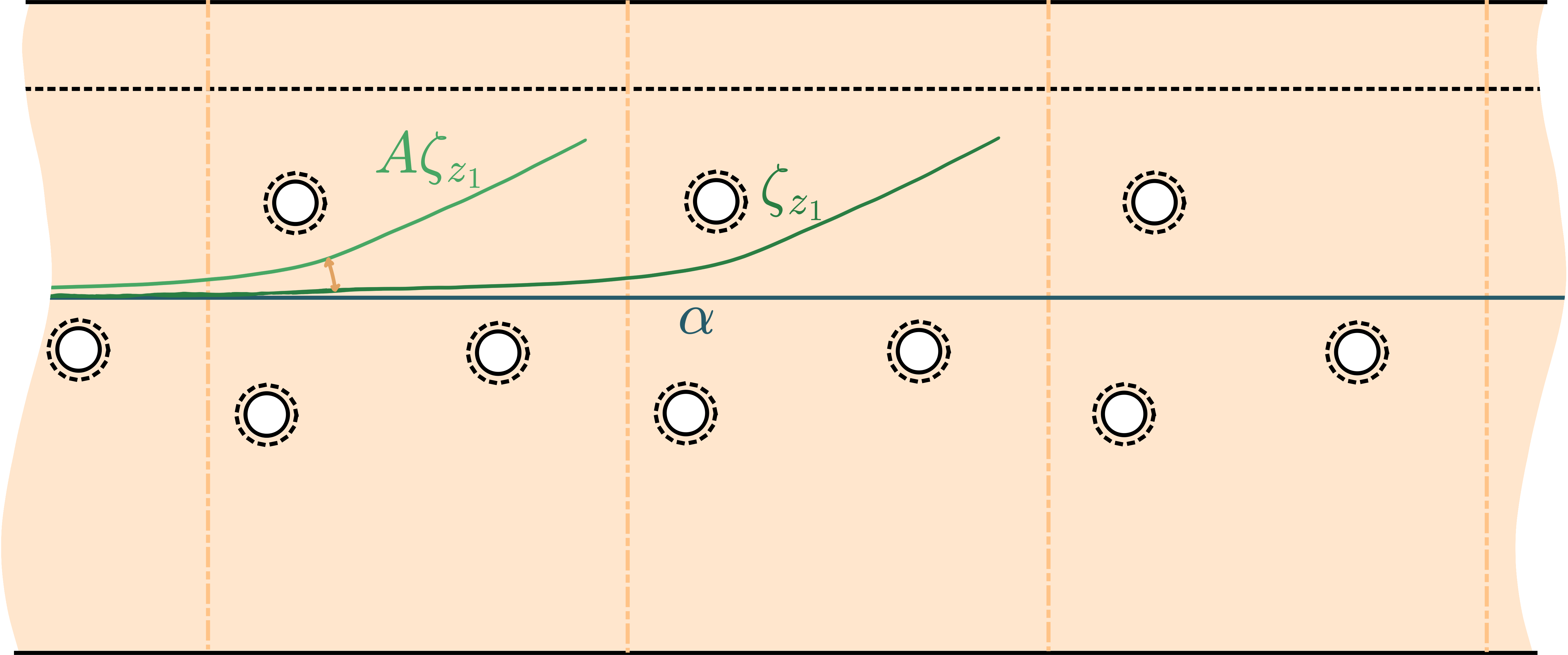}
            \caption{See Lemma~\ref{lem:spiral_to_partial_1}. We have two geodesics $\zeta_{z_1}$ and $A\zeta_{z_1}$ in green, spiraling towards a curve $\alpha$ in blue. Since they spiral to the same curve, we have for some $t'$ that $d\left(A\zeta_z(t'),\zeta_z(t'+\ell_\alpha)\right)<\delta$.}
    \end{figure}

    In the second case the projection $p\circ\zeta_z$, being a non-self-intersecting infinite geodesic, will spiral to some simple closed geodesic $\alpha$.%
    \footnote{We use the argumentation in \cite[Paragraph after Theorem~4.5]{Mirzakhani2006_SimpleGeodesics_TopRec} for this claim.} 
    It is clear that this geodesic must be separating, since the Busemann function of $\zeta_z$ is decreasing.
    This means that, unless $p\circ\zeta_z$ spirals towards $\partial_1$, the limiting closed geodesic $\alpha$ has length larger than $L_1$. 
    Moreover, due to the fact that the number of simple closed geodesics with bounded lengths is finite, we can find a $\delta>0$ such that $\ell_\alpha>L_1+\delta$. 
    The fact that $p\circ\zeta_z$ spirals towards $\alpha$ means that we can find a $t'$ such that 
    \begin{align}
        d\left(p\circ\zeta_z(t'),p\circ\zeta_z(t'+\ell_\alpha)\right)<\delta,
    \end{align}
    which means in the cover $\hat{X}$ we have
    \begin{align}
        d\left(A\zeta_z(t'),\zeta_z(t'+\ell_\alpha)\right)<\delta.
    \end{align}
    Meanwhile,
    \begin{align}
       B_\gamma(A\zeta_z(t'))-B_\gamma(\zeta_z(t'+\ell_\alpha))=B_\gamma(\zeta_z(t'))-L_1-\left(B_\gamma(\zeta_z(t'))-\ell_\alpha\right)=\ell_\alpha-L_1>\delta.
    \end{align}
    This is in contradiction with the fact that Busemann functions are 1-Lipschitz.
\end{proof}

We call these projections of co-rays to $\gamma$ \emph{base spirals}.
Note that the base spirals are independent of the choice of base point on $\gamma$.
For any point $x\in\check{X}$, we denote by $w(x)$ the number of base spirals starting at $x$. 

To get a sense of local structure it is also useful to look at asymptotically parallel geodesics that have almost minimal length. 
We call an asymptotically parallel geodesic in $\alpha_{\hat{x}}\subset\hat{X}$ starting at $\hat{x}$ \emph{$\epsilon$-optimal} for $\epsilon>0$ when its length is bounded by $\ell_\gamma(\alpha_{\hat{x}})<B_\gamma(\hat{x})+\epsilon$ and its projection to $\check{X}$ has no self intersections.

For our starting point $x\in\check{X}$ we pick a shortest geodesic $\beta_{*}$ from $x$ to the infinite boundary of $\partial_2$, consider its lifts $\hat{\beta}_{*,j}$, where $j\in\Z$ indexes the lifts, such that $\hat{\beta}_{*,0}$ starts at $\hat{x}$ and $A\hat{\beta}_{*,j}=\hat{\beta}_{*,j+1}$. 
Furthermore, we pick shortest geodesics $\beta_{i}\subset\check{X}$ from the infinite boundary of $\partial_i$ to the infinite boundary of $\partial_2$ for $i\in\{3,\dots,n\}$ and also consider their lifts $\hat{\beta}_{i,j}$ in $\hat{X}$, such that $\hat{\beta}_{i,j}$ end on the infinite boundary of $\partial_2$ between $\hat{\beta}_{*,j}$ and $\hat{\beta}_{*,j+1}$. 
Observe that $\hat{X}\setminus\bigcup_{i,j}\hat{\beta}_{i,j}$ is simply connected.

\begin{lemma}\label{lem:eps_opt}
Let $\alpha_{\hat{x}}$ be an $\epsilon$-optimal asymptotically parallel geodesic for some $\epsilon>0$. 
Then
\begin{enumerate}
    \item\label{enum:eps_opt_finite} $\alpha_{\hat{x}}$ intersects each $\hat{\beta}_{i,j}$ finitely many times.
    \item\label{enum:eps_opt_K} If $\alpha_{\hat{x}}$ intersects $\hat{\beta}_{i,j}$ for some $i,j$, there exists a $K>0$ such that $\alpha_{\hat{x}}$ cannot intersect $\hat{\beta}_{i,j+k}$  for $k>K$.
    \item For all $i$ there exists a $J_{\mathrm{min}}<0$ such that $\alpha_{\hat{x}}$ cannot intersect $\hat{\beta}_{i,j}$ for $j<J_{\mathrm{min}}$.
    \item\label{enum:eps_opt_j_above} For all $i$ there exists a $J_{\mathrm{max}}>0$ such that $\alpha_{\hat{x}}$ cannot intersect $\hat{\beta}_{i,j}$ for $j>J_{\mathrm{max}}$.
\end{enumerate}
In particular, there are finitely many homotopy classes that contain an $\epsilon$-optimal asymptotically parallel geodesic.
\end{lemma}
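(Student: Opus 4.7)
The plan is to combine the quantitative $\epsilon$-optimality of $\alpha_{\hat x}$ with a spiraling argument in the style of Lemma~\ref{lem:spiral_to_partial_1}, and then turn the geometric picture into a finite count of intersections. First I would introduce the deficit $e(t) \coloneqq t + B_\gamma(\alpha_{\hat x}(t)) - B_\gamma(\hat x)$. Since $B_\gamma$ is $1$-Lipschitz the function $e$ is non-decreasing, and by $\epsilon$-optimality its limit is bounded by $\epsilon$, so $e(t)\in[0,\epsilon)$ for every $t$. The identity $B_\gamma(\alpha_{\hat x}(t)) = B_\gamma(\hat x)-t+e(t)$ thus tells us that the Busemann function decreases at rate exactly $1$ along $\alpha_{\hat x}$ up to a uniformly bounded error. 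Claim~(3) is then immediate: under $A$ the Busemann values on $\hat\beta_{i,j}$ shift by $-jL_1$, so for $j$ sufficiently negative all Busemann values on $\hat\beta_{i,j}$ exceed $B_\gamma(\hat x)+\epsilon$, which by the identity above $\alpha_{\hat x}$ can never attain.

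For the remaining claims the key step is to establish the analog of Lemma~\ref{lem:spiral_to_partial_1}: since $p\circ\alpha_{\hat x}$ is simple, it is either a simple closed separating geodesic or an infinite simple geodesic spiraling to one. Each traversal of a full cylinder loop is realised in $\hat X$ by an application of the deck transformation $A$, which reduces $B_\gamma$ by exactly $L_1$, while its arclength equals the length of the limiting closed curve. If this limiting curve were not $\partial_1$, strict tightness together with the finiteness of the set of simple closed geodesics of bounded length would give a uniform $\delta>0$ by which its length exceeds $L_1$, so every loop would contribute at least $\delta$ to $e(t)$. Infinitely many loops would then contradict $e(t)<\epsilon$, so $p\circ\alpha_{\hat x}$ must spiral to $\partial_1$. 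I would then fix an annular neighbourhood $N$ of $\partial_1$ in $\check X$ disjoint from every $\beta_i$ and a threshold $T>0$ such that $p\circ\alpha_{\hat x}(t)\in N$ for all $t>T$. Every intersection of $\alpha_{\hat x}$ with $\bigcup_{i,j}\hat\beta_{i,j}$ then lies in $\alpha_{\hat x}([0,T])$ and projects to an intersection of the simple compact geodesic arc $p\circ\alpha_{\hat x}|_{[0,T]}$ with $\beta_i$; such an intersection is discrete in a compact set and hence finite. Simpleness of $p\circ\alpha_{\hat x}$ ensures that distinct intersection points on $\beta_i$ correspond to distinct lifts $\hat\beta_{i,j}$, which simultaneously bounds both the number of $j$'s with non-empty intersection and the number of intersections with each individual $\hat\beta_{i,j}$. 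This yields (1), (2) (as a specialisation of (4)), and (4). Finally, the complement $\hat X\setminus\bigcup_{i,j}\hat\beta_{i,j}$ is simply connected, so the homotopy class of $\alpha_{\hat x}$ is determined by its ordered sequence of signed crossings of the $\hat\beta_{i,j}$; the previous bounds restrict this sequence to finitely many possibilities.

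The main obstacle I anticipate is making the spiraling argument fully rigorous in the $\epsilon$-optimal (rather than minimising) setting: one must convert the topological fact of spiraling to a limiting closed curve of length $>L_1$ into a genuine quantitative accumulation of deficit in $e(t)$, and also handle the degenerate cases in which $p\circ\alpha_{\hat x}$ is itself a simple closed geodesic or $\alpha_{\hat x}$ limits onto $\hat\partial_1$ from above. A secondary subtlety is the correct interpretation of ``one loop'' in $\hat X$: although the spiraling picture takes place in the quotient $\check X$, the Busemann accounting must be done in the cover, which requires tracking how consecutive nearly-closed subarcs of $\alpha_{\hat x}$ are related by powers of $A$.
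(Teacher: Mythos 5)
Your deficit function $e(t)$ is precisely the paper's key inequality \eqref{eq:eps_opt_Busemann}, and your argument for claim (3) coincides with the paper's. Where the proposal diverges is the detour through an $\epsilon$-optimal version of Lemma~\ref{lem:spiral_to_partial_1}, and this is where it has genuine problems. The per-ray part could perhaps be salvaged (your cumulative deficit-per-loop estimate is the correct quantitative replacement for the exact co-ray computation), but it rests on an unproved classification: that a simple infinite geodesic ray which does not close up must spiral onto a simple closed geodesic. You would also have to rule out escape into a funnel, a limit onto a non-separating closed geodesic (whose lifts in $\hat{X}$ are closed, so the deck/Busemann bookkeeping is different), and, more seriously, accumulation onto a geodesic lamination that is not a closed geodesic; the paper invokes the spiraling dichotomy only for co-rays in Lemma~\ref{lem:spiral_to_partial_1}, and its proof of Lemma~\ref{lem:eps_opt} deliberately avoids this machinery by working only with Busemann values at the crossings with the $\hat{\beta}_{i,j}$. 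A smaller point: your assertion that simplicity of $p\circ\alpha_{\hat{x}}$ forces distinct intersection points on $\beta_i$ to lie on distinct lifts $\hat{\beta}_{i,j}$ is false (two crossings at different times can land on the same lift), though it is not needed once you have the compactness step.

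The decisive gap is the final conclusion. Your bounds -- the trapping time $T$ after which $p\circ\alpha_{\hat{x}}$ stays in the annular neighbourhood $N$, and hence the length and depth of the crossing sequence -- depend on the individual ray. Finiteness of each ray's crossing sequence does not imply that only finitely many homotopy classes contain an $\epsilon$-optimal ray from $\hat{x}$: a priori, different $\epsilon$-optimal rays could be trapped only after arbitrarily long times and could cross $\hat{\beta}_{i,j}$ with arbitrarily large $j$ or arbitrarily many times, producing infinitely many classes. (Sublevel sets of $B_\gamma$ are ``downstream'' regions, not neighbourhoods of $\hat{\partial}_1$, so the Busemann bound alone gives no uniform trapping.) The paper's proof instead produces bounds on which $\hat{\beta}_{i,j}$ can be crossed and how often that depend only on $\hat{x}$, $\epsilon$ and the fixed geometry -- via the lower bound $B_0-jL_1$ along $\hat{\beta}_{i,j}$, a uniform gap between successive crossings of a fixed $\hat{\beta}_{i,j}$, the $k$-independent length bound on the segment $c$ in claim (2), and the trace-back argument across $\hat{\beta}_{*,1}$ for claim (4) -- and it is exactly this uniformity that yields finitely many homotopy classes and hence Corollary~\ref{cor:finite_w}. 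To repair your route you would need a version of the spiraling/trapping statement that is uniform over all $\epsilon$-optimal rays from $\hat{x}$, which is essentially as much work as the paper's direct intersection bookkeeping.
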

\begin{corollary}\label{cor:finite_w}
    For each $x\in\check{X}$, the number of (projections of) $\epsilon$-optimal asymptotically parallel geodesics is finite. 
    In particular, $w(x)$ is finite.
\end{corollary}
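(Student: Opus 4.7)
The plan is to reduce the corollary to the final sentence of Lemma~\ref{lem:eps_opt} (finiteness of homotopy classes of $\epsilon$-optimal asymptotically parallel geodesics), combined with the standard fact that on a negatively curved surface each free homotopy class of arcs with prescribed endpoint behavior contains at most one geodesic representative.

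First, I would pass from $x \in \check{X}$ to a fixed lift $\hat{x} \in p^{-1}(x)$. Any $\epsilon$-optimal asymptotically parallel geodesic in $\check{X}$ starting at $x$ pulls back to an $\epsilon$-optimal asymptotically parallel geodesic in $\hat{X}$ starting at some lift of $x$; applying a power of the deck transformation $A$ (which is an isometry and, by \eqref{eq:bus_deck}, shifts $B_\gamma$ by a multiple of $L_1$ without affecting the length functional $\ell_\gamma$) I may assume every such lift starts at $\hat{x}$. It therefore suffices to bound the number of $\epsilon$-optimal asymptotically parallel geodesics emanating from $\hat{x}$.

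Next, I would invoke Lemma~\ref{lem:eps_opt}: parts \ref{enum:eps_opt_finite}--\ref{enum:eps_opt_j_above} bound the indices $(i,j)$ for which an $\epsilon$-optimal $\alpha_{\hat{x}}$ can meet $\hat\beta_{i,j}$ and also the number of such intersections, and since $\hat{X}\setminus\bigcup_{i,j}\hat\beta_{i,j}$ is simply connected, the combinatorial crossing sequence determines the homotopy class of $\alpha_{\hat{x}}$ (relative to $\hat{x}$ and the asymptotic direction of $\gamma$). This yields only finitely many homotopy classes. Uniqueness of geodesic representatives in a homotopy class on a hyperbolic surface (here with the fixed starting point $\hat{x}$ and fixed asymptotic end along $\gamma$) then implies that there are finitely many $\epsilon$-optimal asymptotically parallel geodesics starting at $\hat{x}$, and hence finitely many projections to $\check{X}$.

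For the second assertion, any co-ray $\zeta_{z}$ satisfies $\ell_\gamma(\zeta_{z}) = B_\gamma(z)$ by definition and, by the previous lemma, projects to a geodesic in $\check{X}$ without self-intersections. Thus every co-ray is $\epsilon$-optimal for every $\epsilon > 0$; fixing, say, $\epsilon = 1$, the base spirals at $x$ form a subset of the finite set just constructed, so $w(x) < \infty$. The main point one has to be careful about is the uniqueness of geodesics representing a given homotopy class with an asymptotic rather than a closed endpoint; this is handled by the combination of simple-connectedness of the complement $\hat{X}\setminus\bigcup_{i,j}\hat\beta_{i,j}$ with the bounded crossing data provided by Lemma~\ref{lem:eps_opt}, after which the standard negative-curvature uniqueness argument applies verbatim to the initial segment and the spiralling tail established in Lemma~\ref{lem:spiral_to_partial_1}.
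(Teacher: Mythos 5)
Your proposal is correct and follows the same route the paper intends: the corollary is drawn directly from the final statement of Lemma~\ref{lem:eps_opt} (finitely many homotopy classes), with the implicit step you make explicit — at most one geodesic from $\hat{x}$ asymptotic to $\gamma$ in each class, via uniqueness of geodesics to a fixed ideal point in the universal cover — plus the observation that base spirals are projections of co-rays and hence $\epsilon$-optimal. One tiny imprecision: the deck transformation $A$ does shift $\ell_\gamma$ (by $-L_1$), but it shifts $B_\gamma(\hat{x})$ by the same amount, so $\epsilon$-optimality is preserved exactly as you need.
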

\begin{proof}[Proof of Lemma~\ref{lem:eps_opt}]
    Note that the fact that $\alpha_{\hat{x}}$ is $\epsilon$-optimal implies for any $0<t_1<t_2$
    \begin{align}\label{eq:eps_opt_Busemann}
        B_\gamma(\alpha_{\hat{x}}(t_1))-B_\gamma(\alpha_{\hat{x}}(t_2))>t_2-t_1-\epsilon.
    \end{align}
    Note that the Busemann function on $\alpha_{\hat{x}}$ is thus bounded from above by $B_\gamma(\alpha_{\hat{x}}(0))+\epsilon$.

    On the other hand, for each $i,j$ the Busemann function on $\hat{\beta}_{i,j}$ is bounded from below.
    In particular, we can use the fact that $B_\gamma(Ay)=B_\gamma(y)-L_1$ for any $y\in\hat{X}$, to see that there is a constant $B_0$ such that the Busemann function on $\hat{\beta}_{i,j}$ is larger than $B_0-j L_1$.

    Combining these facts, we get that for each $i,j$ the Busemann functions on intersections of $\alpha_{\hat{x}}$ and $\hat{\beta}_{i,j}$ are bounded, where the upper bound does not depend on $i,j$. 

    \begin{figure}
        \centering
            \centering
            \includegraphics[width=.7\textwidth]{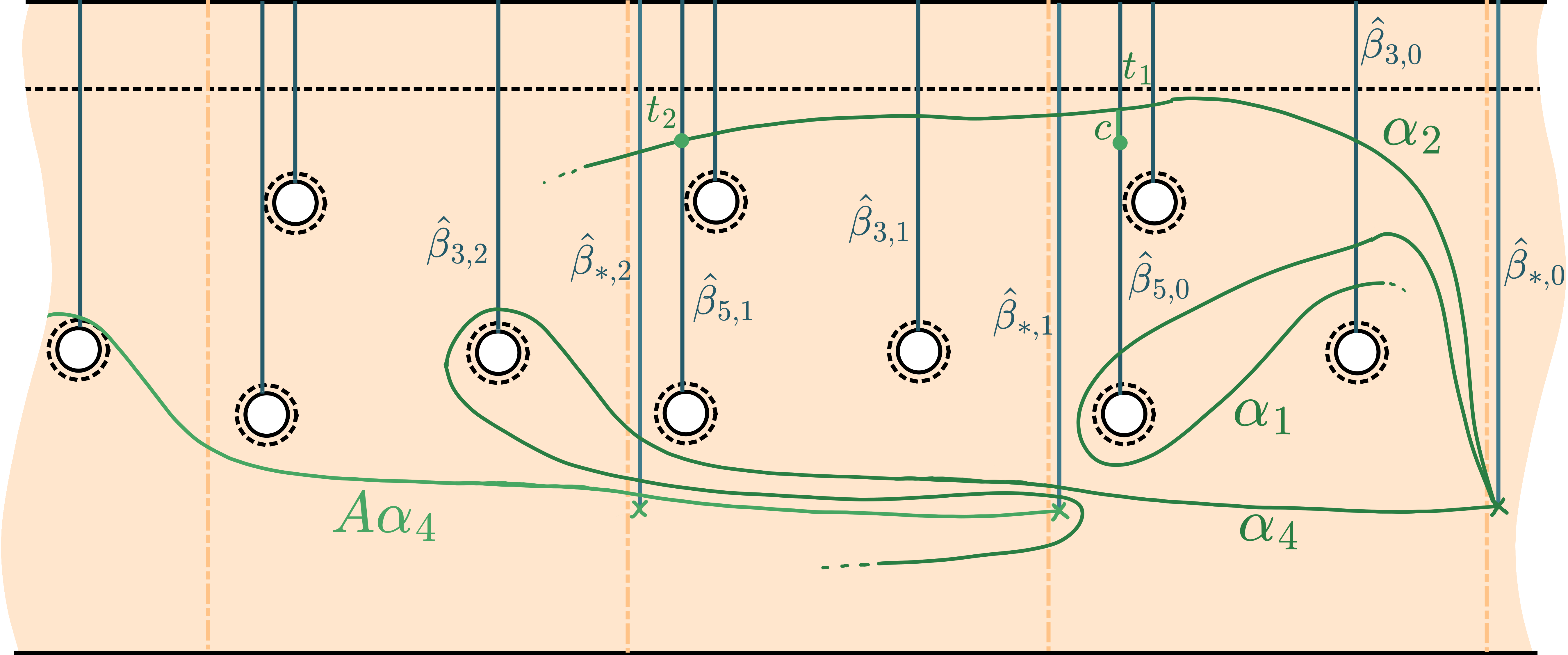}
            \caption{
                Some examples supporting Lemma~\ref{lem:eps_opt}.\\
                For statement \ref{enum:eps_opt_finite}, the curve $\alpha_1$ shows that there is some distance at least $\delta>0$ between the two crossings of $\hat{\beta}_{3,0}$.\\
                For statement \ref{enum:eps_opt_K}, the curve $\alpha_2$ shows the intersections with $\hat{\beta}_{5,0}$ at $t_1$ and $\hat{\beta}_{5,1}$ at $t_2$, with segment $c$ between $\alpha_2(t_1)$ and $A^{-1}\alpha_2(t_2)$.\\
                For statement \ref{enum:eps_opt_j_above}, the curve $\alpha_4$ and its other lift $A\alpha_4$ show that $\hat{\beta}_{*,1}$ is crossed at least twice, with the crossing of $\hat{\beta}_{*,2}$ in between, when $\hat{\beta}_{3,2}$ is crossed while $\hat{\beta}_{3,1}$ is not crossed.
            }
    \end{figure}

    We will prove the claims one-by-one:
    \begin{enumerate}
        \item \emph{$\alpha_{\hat{x}}$ intersects each $\hat{\beta}_{i,j}$ finitely many times.}\\
        Since $\alpha_{\hat{x}}$ is a geodesic, there is some minimal length $\delta>0$ between two intersections with $\hat{\beta}_{i,j}$. 
        Using equation \eqref{eq:eps_opt_Busemann}, we see that the Busemann function at the $(n+1)$-th intersection must be decreased by at least $n\delta-\epsilon$ with respect to the first intersection.
        Due to the bounded Busemann function on intersections, $n$ is finite.
        \item  \emph{If $\alpha_{\hat{x}}$ intersects $\hat{\beta}_{i,j}$ for some $i,j$, there exists a $K>0$ such that $\alpha_{\hat{x}}$ cannot intersect $\hat{\beta}_{i,j+k}$  for $k>K$.}\\
        Let $t_1,t_2$ be the intersection times with $\hat{\beta}_{i,j}$ resp. $\hat{\beta}_{i,j+k}$, and let $c$ be the segment of $\hat{\beta}_{i,j}$ between $\alpha_{\hat{x}}(t_1)$ and $A^{-k}\alpha_{\hat{x}}(t_2)$, and let $\ell(c)$ be its length.
        We claim that $\ell(c)$ is has an upper bound independent of $k$:

        We note that the Busemann function on $\hat{\beta}_{i,j}$ diverges toward both ideal endpoints, while 
        \begin{align}
            B_\gamma(\alpha_{\hat{x}}(t_1))<B_\gamma(\alpha_{\hat{x}}(0)),
        \end{align}
        which means that $\alpha_{\hat{x}}(t_1)$ is in an interval of $\hat{\beta}_{i,j}$ that is bounded independent of $k$.

        On the other hand $A^{-k}\alpha_{\hat{x}}(t_2)$ cannot be between $\alpha_{\hat{x}}(t_1)$ and (the lift of) the infinite boundary of $\partial_2$ to avoid self-intersections of the projections of $\alpha_{\hat{x}}$ to $\check{X}$, and it cannot be inside (the lift of) the funnel of $\partial_i$, since that would mean that $\alpha_{\hat{x}}(t_2)$ is in the funnel as well, which is impossible, since $\alpha_{\hat{x}}$ is a geodesic.

        We conclude that the length $\ell(c)$ is indeed bounded independent of $k$.
        \\\\
        Next, we consider the closed curve which we get from projecting $\alpha_{\hat{x}}|_{[t_1,t_2]}\cup c$ back to $\check{X}$. 
        This curve decomposes in $k$ separating curves, each not homotopic to $\partial_1$ and thus each of length at least $L_1+\delta$, for some $\delta>0$, independent of $k$. 
        This gives
        \begin{align}\label{eq:delta_t_wrt_lc}
            t_2-t_1-\epsilon>k(L_1+\delta)-\ell(c)-\epsilon.
        \end{align}  
        Using \eqref{eq:eps_opt_Busemann}, we get
        \begin{align}
            B_\gamma(\alpha_{\hat{x}}(t_1))-B_\gamma(A^{-k}\alpha_{\hat{x}}(t_2))+k L_1 >k(L_1+\delta)-\ell(c)-\epsilon,
        \end{align}
        which gives
        \begin{align}
            k<\frac{B_\gamma(\alpha_{\hat{x}}(t_1))-B_\gamma(A^{-k}\alpha_{\hat{x}}(t_2))+\ell(c)+\epsilon}{\delta}\leq \frac{2\ell(c)+\epsilon}{\delta},
        \end{align}
        where we used that the Busemann function is 1-Lipschitz.
        The right-hand side is now bounded independent of $k$ as claimed.
        
        \item \emph{For all $i$ there exists a $J_{\mathrm{min}}<0$ such that $\alpha_{\hat{x}}$ cannot intersect $\hat{\beta}_{i,j}$ for $j<J_{\mathrm{min}}$.}\\
        It follows immediately from the upper and lower bounds on Busemann function at intersections that there can be no intersections for $j<\frac{B_0-B_\gamma(\alpha_{\hat{x}}(0))}{L_1}$.

        \item \emph{For all $i$ there exists a $J_{\mathrm{max}}>0$ such that $\alpha_{\hat{x}}$ cannot intersect $\hat{\beta}_{i,j}$ for $j>J_{\mathrm{max}}$.}\\
        Here, the fact that $\alpha_{\hat{x}}$ projects to a non-self-intersecting curve becomes important.
        We start by assuming that $\alpha_{\hat{x}}$ intersects $\hat{\beta}_{i,j}$.
        For sufficiently large $j>0$, due to property \ref{enum:eps_opt_K}, there are some $\hat{\beta}_{i,j'}$ for $0\leq j'<j$ which it does not cross. 
        In particular, to avoid self-intersections of the projection, the curve $\alpha_{\hat{x}}$ has to start by `moving further away' from $\partial_1$ each turn around the cylinder, which means that we have to `trace back' crossing both $\hat{\beta}_{*,j}$ and $\hat{\beta}_{*,1}$ at least twice, with at least $2(j-1) L_1-\epsilon$ decrease in Busemann function between the two crossings of $\hat{\beta}_{*,1}$, due to \eqref{eq:eps_opt_Busemann}. 
        Again, since the Busemann function on intersections with $\hat{\beta}_{*,1}$ is bounded, $j$ has to be bounded from above. 
    \end{enumerate}

    Given these constraints, there are only finitely many $\hat{\beta}_{i,j}$ that $\alpha_{\hat{x}}$ can cross, and the number of crossings is also finite. 
    Since the complement is simply connected, there is thus only a finite number of homotopy classes that contain an $\epsilon$-optimal asymptotically parallel geodesic.
\end{proof}

\subsection{The spine}

We can define the spine analogous to \cite{Budd2025_Tree_bij}, where the base spirals take the role of the shortest paths to the root cusp, and prove the equivalents of \cite[Lemma~8]{Budd2025_Tree_bij} and \cite[Lemma~9]{Budd2025_Tree_bij}.

We define the spine $\Sigma$ of the extended half-tight cylinder $\check{X}$ as the set of all points in $\check{X}$ that have more than one base spiral,
\begin{align}
    \Sigma=\{x\in\check{X}|w(x)\geq2\}.
\end{align}
Inner vertices $V$ have at least $3$ base spirals:
\begin{align}
    V=\{x\in\check{X}|w(x)\geq3\}.
\end{align}

\begin{figure}
    \centering
    \begin{subfigure}{.3\textwidth}
        \centering
        \includegraphics[width=\textwidth]{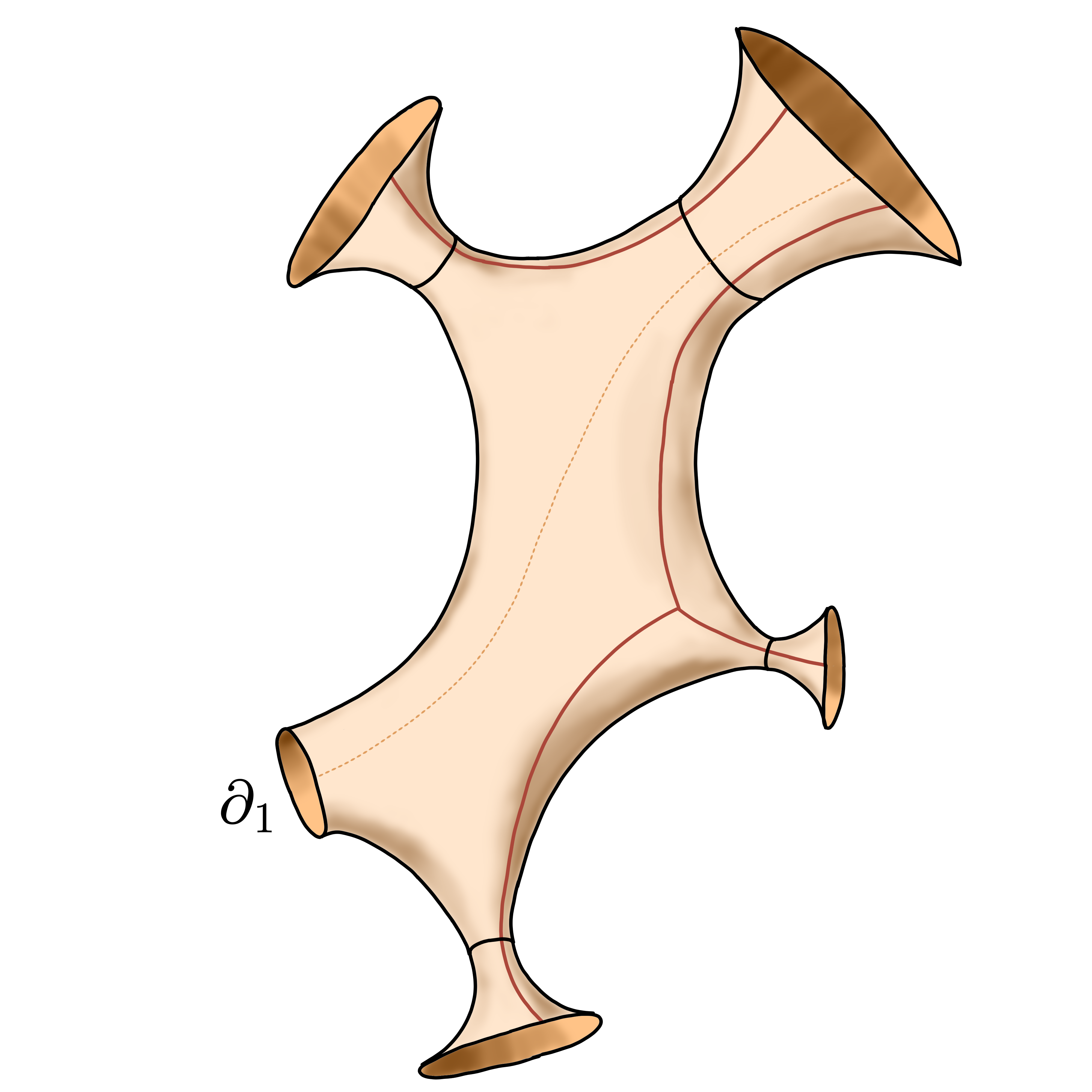}
    \end{subfigure}
    \begin{subfigure}{.65\textwidth}
        \centering
        \includegraphics[width=\textwidth]{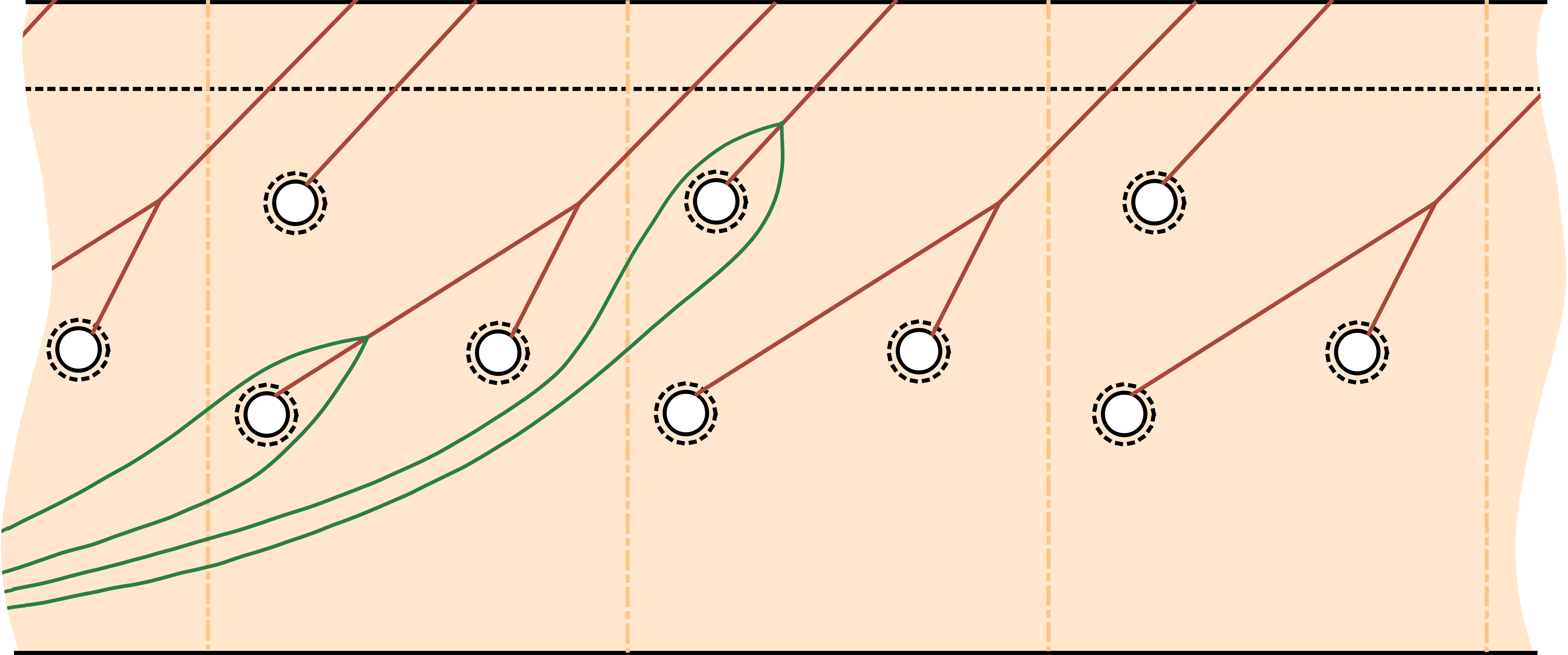}
    \end{subfigure}
    \caption{The spine, both in $\check{X}$ on the left and in $\hat{X}$ on the right. On the right also two examples of the multiple base spirals for points on the spine.}
\end{figure}

\pagebreak[2]

\begin{lemma}[{Extension of \cite[Lemma~8]{Budd2025_Tree_bij}}]\label{lem:spineproperties}
    The spine $\Sigma$ satisfies the following properties.
    \begin{enumerate}
        \item $V$ is a finite set.
        \item $\Sigma \setminus V = \{ x\in\check{X} : w(x) = 2\}$ consists of a finite union of open geodesic arcs. Tracking each arc in each direction it either ends at a point in $V$ or at an ideal point inside a funnel.
        \item Each point in $x\in V$ is the endpoint of exactly $w(x)\geq 3$ arcs.
        \item The boundary at infinity of each boundary or cusp (except the origin) contains the endpoint of at least one arc.
        \item The compactified spine $\overline{\Sigma}$, obtained by joining the endpoints of all arcs ending on the boundary at infinity of a single funnel or cusp, is a tree. 
    \end{enumerate}
\end{lemma}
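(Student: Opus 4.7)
The plan is to mirror the argument of \cite[Lemma~8]{Budd2025_Tree_bij}, replacing shortest paths to a root cusp by base spirals. Lemma~\ref{lem:eps_opt} and Corollary~\ref{cor:finite_w} provide the global finiteness input, while a standard bisector argument gives the local geodesic structure of the arcs.

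For the local picture, I would first establish that $w:\check{X}\to\Z_{\geq 1}$ is upper semi-continuous: a sequence of co-rays at points converging to $x$ accumulates on co-rays at $x$ by continuity of the Busemann function and compactness of directions, so having too many co-rays near $y$ would force too many at $x$. Given $x\in\check{X}$ with $w(x)$ base spirals, these divide a small neighborhood into $w(x)$ open sectors on which $w=1$, and along whose common boundaries a second co-ray persists. The locus equidistant between two adjacent asymptotically parallel co-rays is itself a geodesic, so these boundaries are geodesic arcs on which $w=2$. This yields (3) and the geodesic-arc structure in (2). Following any such arc in one direction, either a third co-ray joins (terminating at a point of $V$) or the arc leaves every compact subset, which by Lemma~\ref{lem:eps_opt} is only possible if it escapes out to the ideal boundary of a funnel (it cannot escape along $\partial_1$, since by Lemma~\ref{lem:spiral_to_partial_1} co-rays near $\partial_1$ are unique, so $w=1$ in a neighborhood). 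This gives the endpoint dichotomy in (2). For (1), each $x\in V$ selects at least three distinct $\epsilon$-optimal homotopy classes of asymptotically parallel geodesics; since the Busemann values of points of $V$ lie in a compact range, Lemma~\ref{lem:eps_opt} produces only finitely many homotopy class combinations, and a compactness argument bounds $|V|$.

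For (4), I would argue that sufficiently far out in each funnel of $\partial_i$, $i\neq 1$, there exist at least two non-homotopic $\epsilon$-optimal asymptotically parallel geodesics from any given starting point, corresponding to the two rotational directions around $\partial_i$. As the starting point is pushed out to ideality both remain co-rays, so their common starting locus is an arc of $\Sigma$ whose ideal endpoint lies on the boundary at infinity of that funnel.

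For the tree property (5), the idea is that on the complement $\check{X}\setminus\Sigma$ the co-ray is unique and depends smoothly on the base-point, giving a foliation whose leaves spiral into $\partial_1$. This exhibits each connected component of $\check{X}\setminus\Sigma$ as a topological disk bounded by spine arcs, parts of $\partial_1$, and ideal boundary arcs of funnels. In the cylinder cover $\hat{X}$, the complement of the lifted spine consists of simply connected half-strips retracting onto subarcs of $\hat\partial_1$, and descending modulo $\langle A\rangle$ one reads off that the compactified spine $\overline{\Sigma}$ has no cycles; connectedness follows because any two complementary faces can be linked through base spirals that both terminate on $\partial_1$. I expect (5) to be the main obstacle, since the cylinder $\check{X}$ has non-trivial fundamental group, forcing the naive disk-decomposition argument from \cite{Budd2025_Tree_bij} to be carried out on the cover $\hat{X}$ and then carefully quotiented.
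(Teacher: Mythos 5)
Your overall plan (adapt \cite[Lemma~8]{Budd2025_Tree_bij} with base spirals in place of shortest paths to the root cusp, use Lemma~\ref{lem:eps_opt}/Corollary~\ref{cor:finite_w} for finiteness, funnel analysis for item 4, and a retraction of the complement onto $\partial_1$ for item 5) is the right one, but the way you try to redo the local analysis directly in $\check{X}$ has concrete soft spots. First, the sector boundaries are \emph{not} the loci equidistant from two adjacent co-rays as curves; they are the loci where the spiral lengths in two distinct homotopy classes coincide, i.e.\ (after lifting) the zero set of a difference of two Busemann-type functions, equivalently the bisector of two \emph{horocycles} centered at distinct ideal points. That bisector is indeed a geodesic, but the set equidistant from the two co-ray geodesics is a different object and is not where $w=2$, so as stated the key step behind items 2 and 3 does not go through. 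Second, your finiteness argument for $V$ leans on ``the Busemann values of points of $V$ lie in a compact range'', but $B_\gamma$ is not well defined on $\check{X}$ (values over a fiber differ by multiples of $L_1$ and are unbounded below along the deck action), and what is actually needed is that $V$ lies in a compact subset of $\check{X}$ together with discreteness of $V$; establishing the former is precisely the content of the funnel analysis and of an analysis near $\partial_1$, neither of which you supply. Third, the exclusion of arcs ending on or accumulating at $\partial_1$ is attributed to Lemma~\ref{lem:spiral_to_partial_1}, which only describes the asymptotic behavior of a single co-ray's projection; the fact that $w\equiv 1$ in a neighborhood of $\partial_1$ needs a separate argument, e.g.\ a uniform gap coming from strict tightness of $\partial_1$ (finiteness of short separating geodesics), as used elsewhere in the paper.

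The paper avoids all three issues by a reduction you do not make: after fixing the base point so that $B_\gamma(x)>0$, it passes to the \emph{universal} cover, where the countably many lifts of $\gamma$ give countably many ideal points, and the length of a base spiral becomes the distance to the horocycle at the corresponding ideal point through the lift of $\gamma(0)$. With this dictionary the setting is literally that of \cite[Lemma~8]{Budd2025_Tree_bij} (shortest geodesics to a family of horocycles), so the $\epsilon$-neighborhood analysis, the funnel analysis, and the geodesicity of the arcs (as horocycle bisectors) can be imported wholesale; the only new ingredient is Corollary~\ref{cor:finite_w}, which compensates for the horocycles no longer being disjoint. Your treatment of item 5 (complement retracts to $\partial_1$, checked on the cylinder cover) matches the paper's argument and is fine. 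I would recommend either adopting the universal-cover/horocycle reduction, or, if you want a self-contained argument, replacing the ``equidistant from two co-rays'' step by the bisector-of-horocycles statement and adding the missing compactness and near-$\partial_1$ arguments.
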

\begin{proof}
    We adapt the arguments from \cite[Lemma~8]{Budd2025_Tree_bij} to our case.
    In that paper, the spine is defined as the locus of points with multiple shortest geodesics to a sufficiently small horocycle around a cusp. 
    The properties of the spine from this lemma are proven by considering the universal cover and the countable lifts of these horocycles.
    \\\\
    In our case, we have a similar setting:
    
    First we fix a parametrization for $\gamma$:
    For each point $x\in\check{X}$, we can pick a lift $\hat{x}\in\hat{X}$ and a base point $\gamma(0)$, such that $B_\gamma(x)>0$.
    
    Next, we consider the \emph{universal} cover $X^u$ of $\check{X}$, which is also the universal cover of $\hat{X}$, we can consider the countably many ideal points corresponding to the limits $\lim_{t\to\infty}\gamma(t)$ and take the horocycles around these ideal point that intersect the corresponding lifts of $\gamma(0)$.

    Note that all asymptotically parallel geodesics in $\hat{X}$ are lifted in $X^u$ to geodesics that end in one of these ideal points, and that the lengths of these spirals is given by the distance of the starting point to the corresponding horocycle.
    We thus have the same setting as the proof of \cite[Lemma~8]{Budd2025_Tree_bij}: we are interested in the number of shortest geodesics to a collection of horocycles in the universal cover.
    
    Unlike \cite{Budd2025_Tree_bij}, the horocycles in our case are not disjoint, but we can use Lemma~\ref{cor:finite_w} instead to still show that only finitely many horocycles are relevant.
\\\\
    The proof of the first four statements now follows from the same arguments as in \cite{Budd2025_Tree_bij}, which are based on the analysis at an $\epsilon$-neighborhood around the lift of $\hat{x}$ and the analysis at a single funnel.%
    \footnote{In \cite{Budd2025_Tree_bij} also horocyclic neighborhoods are considered, but they are not relevant here.}
    
    Finally, the final statement again follows from the fact that the complement of the spine can still be retracted to $\partial_1$.
\end{proof}
Note that this means that \cite[Lemma~9]{Budd2025_Tree_bij} also follows in our case, meaning that the spine can be seen as a tree $\tree\in \treeset_n^{\mathrm{all,HTC}}$, where corners are marked ideal if the adjacent arcs in $\Sigma$ are limiting parallel. 

\section{A canonical tiling}\label{sec:tiling}
Completely analogous to \cite{Budd2025_Tree_bij}, we can tile $\check{X}$ by triangles and wedges. 
We do this by noting that for any $x\in\check{X}\setminus\Sigma$, there is a unique base spiral starting at $x$, which we can uniquely extend away from the base, until it hits the spine at $\mathsf{end}(x)\in\Sigma$ or approaches an ideal point $\mathsf{end}(x)$. 
Together with the convention $\mathsf{end}(x)=x$ for any $x\in\Sigma$, this gives the partition
    \begin{align}
		\check{X}=\mathsf{Ribs}(\check{X})\cup\bigcup_{\edge\in \edgeset(\tree)} \hypdiamond_\edge \cup \bigcup_{\corner\in \cornerset(\tree)} \hypwedge_\corner\cup\, \partial_1,
	\end{align}
    where 
    \begin{align}
       \hypdiamond_\edge&=\{x\in\check{X}|\mathsf{end}(x)\in e\} \\
       \hypwedge_\corner&=\{x\in\check{X}|\mathsf{end}(x)\in c\} 
    \end{align}
    and where $\mathsf{Ribs}(\check{X})$ are the boundary geodesics of these quadrilaterals and wedges.
    See Figure~\ref{fig:tiling}.

\begin{figure}
    \centering
    \begin{subfigure}{.3\textwidth}
        \centering
        \includegraphics[width=\textwidth]{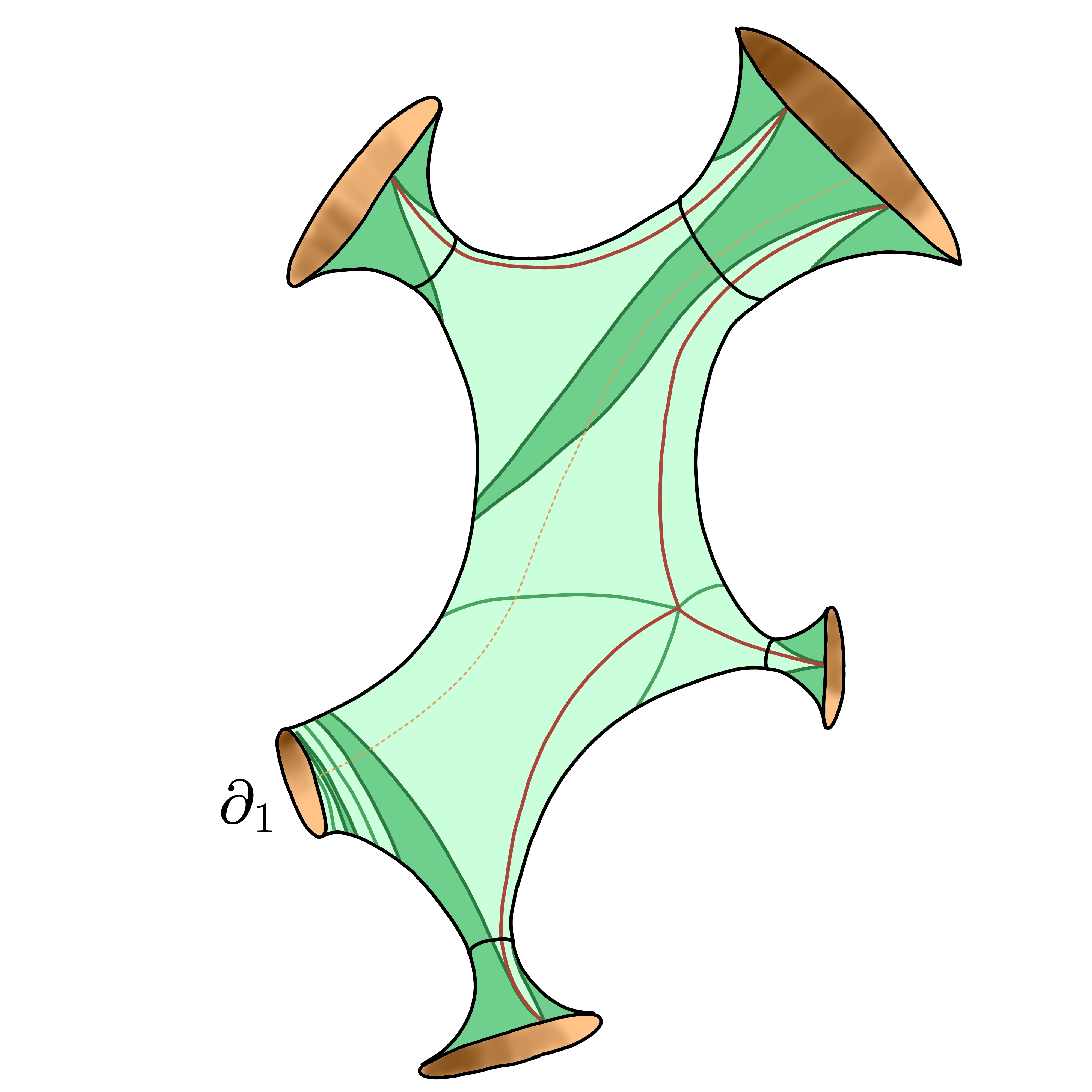}
    \end{subfigure}
    \begin{subfigure}{.65\textwidth}
        \centering
        \includegraphics[width=\textwidth]{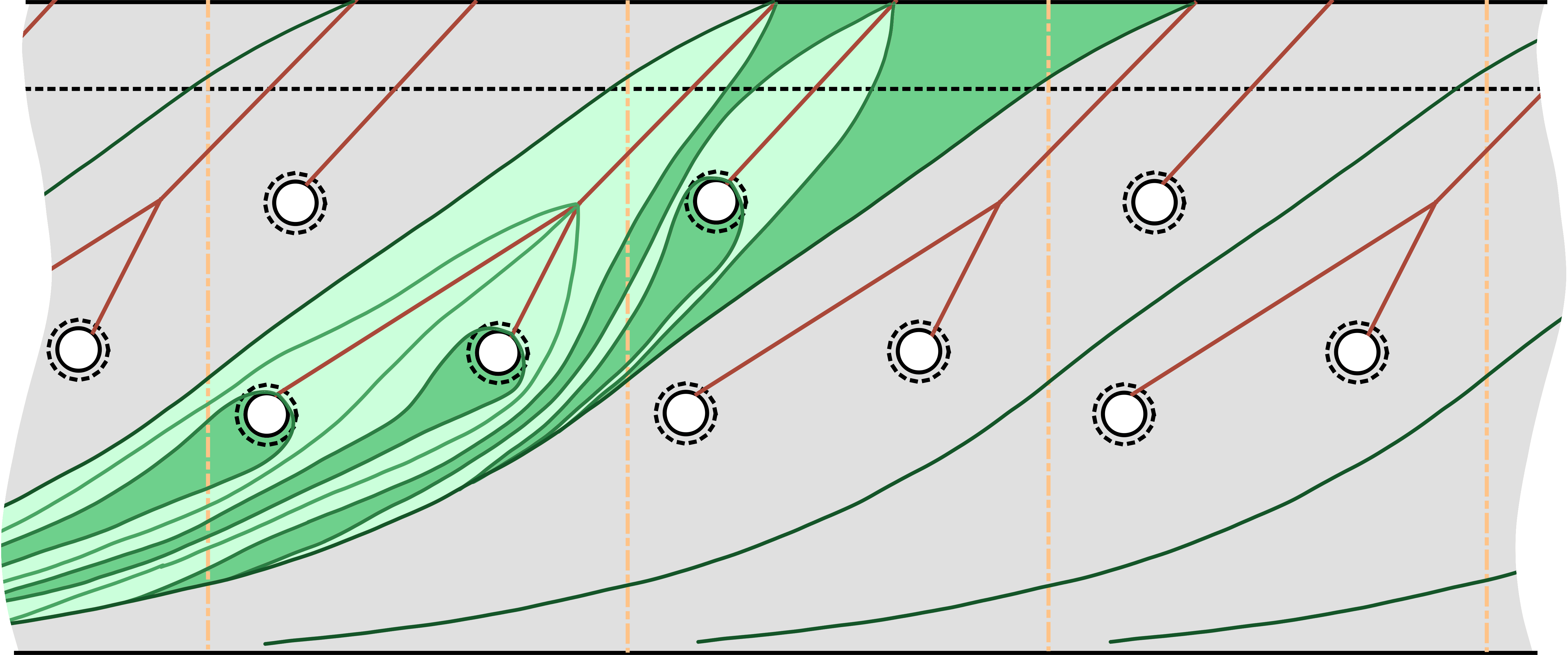}
    \end{subfigure}
    \caption{The tiling of $\check{X}$ on the left. We have wedges (darker) and quadrilaterals (lighter) in green. The boundaries of these tiles are the base spirals that end on an inner vertex or an ideal point of the spine. \\
    On the right we have chosen a slice in $\hat{X}$ by choosing one base spiral corresponding to an ideal endpoint of the spine on the infinite boundary of $\partial_2$. In this slice, we show the tiling.
    \label{fig:tiling}}
\end{figure}

Still analogous to \cite{Budd2025_Tree_bij}, we denote the angles in a quadrilateral $\hypdiamond_\edge$ by $\varphi:\vec{\edgeset}\to [0,\pi)$, for which the following properties hold:
\begin{enumerate}[label = (\roman*)]
    \item $\varphi(\vec{\edge}) = 0$ if and only if $\vec{\edge}$ starts at a boundary vertex;
    \item $\varphi(\vec{\edge}) + \varphi(\cev{\edge}) < \pi$ for $\edge\in \edgeset(\tree)$;
    \item $\sum_{j=1}^{\deg(\vertex)} \varphi(\vec{\edge}_{\vertex,j}) = \pi$ for $\vertex \in \vsetinner(\tree)$.
\end{enumerate} 

We also introduce around a boundary vertex $\bvertex\in\vsetboundary(\tree)$ the parameters $w_{\bvertex,i}$ for each adjacent edge and $v_{\bvertex,i}$ for each non-ideal corner. 
As in \cite{Budd2025_Tree_bij}, $e^{w_{\bvertex,i}}$ and $e^{v_{\bvertex,i}}$ are given by the quotient of the radii of the corresponding ribs, when we project the surface in the upper half plane, such that the corresponding original boundary in $X$ is vertical.

Note that we still have
\begin{align}
	\exp\left(\sum_{i=1}^{\deg(\bvertex)} w_{\bvertex,i} + \sum_{i=1}^{\operatorname{nonid}(\bvertex)} v_{\bvertex,i}\right) = e^{L_\bvertex},
\end{align}
but the other property
\begin{align}
	\exp\left(2\sum_{i=1}^{\deg(\bvertex)} w_{\bvertex,i}\right) = e^{L_\bvertex}
\end{align}
is based on the matching of horocycles around the origin, which is no longer valid in our setting. 
To find the equivalent relation here, we need to see how the Busemann function can provide an alternative matching.

\subsection{The Busemann jump} 
To glue quadrilaterals and wedges around boundary vertices, we want to identify points whose base spirals have the same length. 
We need to be a bit more careful here, since this length is based on the Busemann function, which is only well-defined in $\hat{X}$, not in $\check{X}$. 

To circumvent this problem, we choose one of the non-ideal corners of $\bvertex_2$ in an arbitrarily deterministic way and consider the corresponding rib $\beta$. 
In the cylinder cover $\hat{X}$ we pick a fundamental domain bounded by two lifts $\hat{\beta}_1,\hat{\beta}_2$ of this rib, where $A\hat{\beta}_1=\hat{\beta}_2$.
We will call this fundamental domain a \emph{slice}, using the terminology from \cite[Section 7.2]{Bouttier2014_OnIrreducible_BGM_A}.

The Busemann function on $\check{X}$ can now be defined as the corresponding value within this slice, for a fixed base point $\gamma(0)$, which is continuous in $\check{X}$, except on $\beta$, where it will jump by $L_1$. 
We can now pick a sufficiently small level set of the Busemann function, which will correspond to taking horocycles in the quadrilaterals and wedges.
For adjacent quadrilaterals and wedges, these horocycles match, except at $\beta$.

\begin{figure}
    \centering
        \centering
        \includegraphics[width=.7\textwidth]{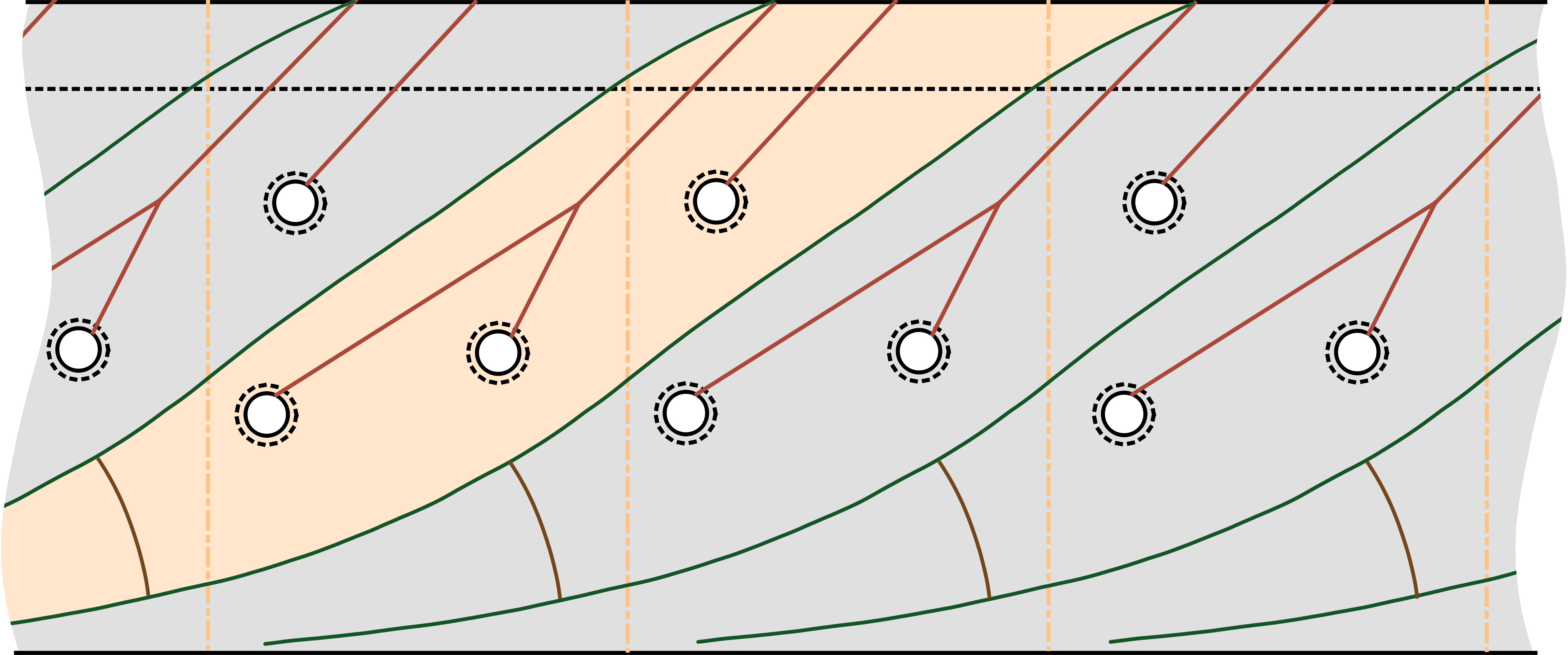}
        \caption{We have chosen a slice as before. The brown line is a line of constant Busemann function in this slice, which acts as a horocycle. Note that this line makes jumps of length $L_1$ when moving to an adjacent slice.}
\end{figure}

Following the same arguments of the proof of \cite[Lemma~12]{Budd2025_Tree_bij}, we get for $i>2$
\begin{align}
    \exp\left(\sum_{j=1}^{\deg(\bvertex)} w_{\bvertex,j}\right) = e^{L_i},
\end{align}
to match the horocycles, while for $i=2$, we need to incorporate the jump, giving
\begin{align}
    \exp\left(\sum_{j=1}^{\deg(\bvertex_2)} w_{\bvertex_2,j}\right) = e^{L_2-L_1}.
\end{align}

From this, the following result follows, which is out first major deviation from \cite{Budd2025_Tree_bij}
\begin{lemma}
    If $\bvertex \in \vsetboundary(\tree)$ is a boundary vertex with label $i>2$, then we have 
	\begin{align}
		\sum_{j=1}^{\deg(\bvertex)} w_{\bvertex,j} = \sum_{j=1}^{\operatorname{nonid}(\bvertex)} v_{\bvertex,j} = L_i/2.
	\end{align}

    Furthermore, for $i=2$ we have
    \begin{align}
		\sum_{j=1}^{\deg(\bvertex_2)} w_{\bvertex_2,j} &= (L_2-L_1)/2\\ 
        \sum_{j=1}^{\operatorname{nonid}(\bvertex_2)} v_{\bvertex_2,j} &= (L_2+L_1)/2.
	\end{align}
    
    These parameters uniquely determine the gluing of the quadrilaterals $\hypdiamond_{\vec{\edge}_{\bvertex,j}}$ and wedges $\hypwedge_{\corner_{\bvertex,j}}$ incident to $\bvertex$.
\end{lemma}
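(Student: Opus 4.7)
My plan is to derive the two claimed sums by combining the single multiplicative identity $\exp(\sum_j w_{\bvertex,j} + \sum_j v_{\bvertex,j}) = e^{L_\bvertex}$ displayed earlier in the subsection (which merely records that the total length of $\partial_\bvertex$ equals $L_\bvertex$) with the horocycle-matching identities that come from the locally defined Busemann function on a slice, exactly as in Lemma~12 of \cite{Budd2025_Tree_bij}. The key point is that on the chosen slice of $\hat{X}$ the Busemann function descends to a continuous function on $\check{X}$ away from the rib $\beta$ at $\bvertex_2$, across which it jumps by $L_1$; this is the only place where the half-tight-cylinder setting differs from the cuspless setting.

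First I fix a boundary vertex $\bvertex = \bvertex_i$ and take a small level set of this local Busemann function inside each tile incident to $\bvertex$; by construction the adjacent tiles $\hypdiamond_{\vec{\edge}_{\bvertex,j}}$ and $\hypwedge_{\corner_{\bvertex,j}}$ agree along each rib, so the horocycle closes up around $\bvertex$. Repeating the ratio-of-radii computation of \cite[Lemma~12]{Budd2025_Tree_bij} in the upper-half-plane model gives the multiplicative relation $\exp(2\sum_j w_{\bvertex,j}) = e^{L_i}$ for $i>2$, where the factor of $2$ records that each rib is traversed once from each side as the horocycle is closed up. For $i=2$ the same computation picks up a single correction of $-L_1$ because the closed horocycle crosses $\beta$ exactly once, yielding $\exp(2\sum_j w_{\bvertex_2,j}) = e^{L_2 - L_1}$.

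Taking logarithms and subtracting from $\sum_j w_{\bvertex,j} + \sum_j v_{\bvertex,j} = L_\bvertex$ gives $\sum_j w_{\bvertex,j} = \sum_j v_{\bvertex,j} = L_i/2$ for $i>2$, and $\sum_j w_{\bvertex_2,j} = (L_2-L_1)/2$, $\sum_j v_{\bvertex_2,j} = L_2 - (L_2-L_1)/2 = (L_2+L_1)/2$, which are the stated formulas.

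For the uniqueness of the gluing around $\bvertex$, the cyclic order of ribs and non-ideal corners is prescribed by the tree, and the individual $w_{\bvertex,j}, v_{\bvertex,j}$ fix the ratios of horocyclic radii at each successive rib in the upper-half-plane picture; once the sums add up to the required totals just verified, these radii assemble unambiguously along $\partial_\bvertex$, and the residual M\"obius freedom is killed by demanding consistency with the chosen slice. The main obstacle, as already indicated, is making sure the $L_1$-jump across $\beta$ is inserted exactly once when $\bvertex=\bvertex_2$ and not at all for the other $\bvertex_i$; this is guaranteed by the deterministic choice of $\beta$ as a single non-ideal corner of $\bvertex_2$, together with the fact that for $i>2$ the boundary $\partial_i$ does not touch $\beta$.
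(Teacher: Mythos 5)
Your argument is essentially the paper's own: the lemma is read off by combining the total-length relation $\exp\bigl(\sum_j w_{\bvertex,j}+\sum_j v_{\bvertex,j}\bigr)=e^{L_\bvertex}$ with the horocycle matching furnished by the slice-wise Busemann function (continuous except for the single jump of $L_1$ across the chosen rib $\beta$ at $\bvertex_2$), delegating the ratio-of-radii computation to Lemma~12 of \cite{Budd2025_Tree_bij}, exactly as in the discussion preceding the lemma. The only remark worth making is that your matching relations $\exp\bigl(2\sum_j w_{\bvertex,j}\bigr)=e^{L_i}$ and $\exp\bigl(2\sum_j w_{\bvertex_2,j}\bigr)=e^{L_2-L_1}$ carry the factor $2$ that the paper's displayed equations omit; your version is the one consistent with the stated sums and with the simplex sizes in \eqref{eq:polytopedef}.
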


\subsection{Tree bijection}
We continue following the steps of \cite{Budd2025_Tree_bij}.

First we can define $\spinebijection(X)$ as taking the tree structure of the spine of $X$ and assigning the parameters
\begin{align}
    \left( (\varphi(\vec{\edge}))_{\vec{\edge}\in\vec{\edgeset}(\tree)}, \left((w_{\bvertex,j})_{j=1}^{\deg(\bvertex)},(v_{\bvertex,j})_{j=1}^{\operatorname{nonid}(\bvertex)}\right)_{\bvertex\in \vsetboundary(\tree)}\right) \in \polytope(\mathbf{L}).\label{eq:pointinpolytope}
\end{align}
As shown in \cite[Proposition~14]{Budd2025_Tree_bij} this makes $\spinebijection$ a well-defined injective mapping.

Similarly, we have the inverse mapping 
\begin{align}
    \mathsf{Glue} : \bigsqcup_{\tree \in \treeset^{\mathrm{all,HTC}}_n} \mathcal{A}_{\tree}(\mathbf{L}) \to \mathcal{H}_n(\mathbf{L}),
\end{align}
defined identically to \cite{Budd2025_Tree_bij}, where we use the tree structure and labels to construct the quadrilaterals and wedges and glue them accordingly.

It needs some more explanation to show that $\mathsf{Glue}$ is well-defined. 
Using arguments from \cite[Lemma~15]{Budd2025_Tree_bij}, it follows that the image is in $\mathcal{M}_{0,n}(\mathbf{L})$, but we need some work to show that it is a half-tight cylinder.

For this we need the equivalent of the `wrapping lemma' from \cite[Section 7.2]{Bouttier2014_OnIrreducible_BGM_A}, which essential to generalize the tree bijection to the half-tight cylinder.

Recall that we can pick a slice in $\hat{X}$ bounded by $\hat{\beta}_1$ and $\hat{\beta}_2=A\hat{\beta}_1$. 
In this slice, we can pick a line of constant (and sufficiently small) Busemann function, which acts as a horocycle $h$ around the ideal point at the bottom of this slice. 
We parametrize $\hat{\beta}_i$ by its signed distance to the horocycle.
\begin{lemma}\label{lem:min_dist}
    For any $t\in\R$, the distance between $\hat{\beta}_1(t)$ and $A\hat{\beta}_1(t)=\hat{\beta}_2(t-L_1)$ is bigger than $L_1$.
\end{lemma}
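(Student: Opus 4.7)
The plan is to argue by contradiction: I will suppose that $d(\hat{\beta}_1(t), A\hat{\beta}_1(t)) \leq L_1$ for some $t \in \R$, take a minimizing geodesic $c$ in $\hat{X}$ between these two points, and project it to a closed geodesic in $\check{X}$ of length at most $L_1$. The projected loop will lie in the free homotopy class of $\partial_1$, and strict tightness will force it to equal $\partial_1$, contradicting the fact that the rib $\hat{\beta}_1$ is disjoint from $\hat{\partial}_1$.

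First, let $c$ be a minimizing geodesic in $\hat{X}$ joining $\hat{\beta}_1(t)$ to $A\hat{\beta}_1(t)$ and set $\eta \coloneqq p \circ c$. Since $A$ is a deck transformation of $p:\hat{X}\to\check{X}$, the two endpoints of $c$ project to the same point of $\check{X}$, so $\eta$ is a closed geodesic loop in $\check{X}$ with $\ell(\eta) = \ell(c) \leq L_1$. Moreover, the lift of $\eta$ starting at $\hat{\beta}_1(t)$ is exactly $c$ and ends at $A\hat{\beta}_1(t)$; under the canonical isomorphism $\pi_1(\check{X}, p(\hat{\beta}_1(t))) \cong \langle A \rangle \cong \Z$ this means $\eta$ represents the generator, so $\eta$ is freely homotopic to the simple closed separating curve wrapping the cylinder exactly once, i.e.\ to $\partial_1$.

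Second, I will invoke the strict tightness of $\partial_1$: within this free homotopy class, the unique length minimizer is $\partial_1$ with length exactly $L_1$. Since $\eta$ is itself a closed geodesic in this class and satisfies $\ell(\eta) \leq L_1$, uniqueness forces $\eta = \partial_1$ as unparametrized geodesics. Hence $c$ is contained in the unique lift $\hat{\partial}_1$, and in particular $\hat{\beta}_1(t) \in \hat{\partial}_1$. This contradicts Lemma~\ref{lem:spiral_to_partial_1}: the rib $\hat{\beta}_1$ is a co-ray to $\gamma$ whose projection to $\check{X}$ is a base spiral terminating at an inner vertex or an ideal point of the spine, which lies in the interior of $\check{X}$ away from $\partial_1$. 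Thus $\hat{\beta}_1$ spirals toward $\hat{\partial}_1$ without meeting it in finite time, so no point $\hat{\beta}_1(t)$ can lie on $\hat{\partial}_1$.

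The main obstacle is the middle step: one must correctly identify the free homotopy class of $\eta$ as exactly the class of $\partial_1$ (and not a higher multiple) in order to apply strict tightness. The deck-transformation bookkeeping handles this cleanly, since the endpoints of $c$ are related by the single generator $A$ rather than a power of it; once the homotopy class is pinned down, the uniqueness clause of strict tightness does the rest.
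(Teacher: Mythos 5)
Your argument is circular in the context where Lemma~\ref{lem:min_dist} is needed. The lemma is stated for the surface obtained as the image of $\mathsf{Glue}$ applied to arbitrary polytope data, and its whole purpose is to establish that this glued surface \emph{is} a half-tight cylinder, i.e.\ that $\partial_1$ is strictly tight there (the paper concludes immediately after the lemma that ``the image of $\mathsf{Glue}$ is indeed a half-tight cylinder''). Your second step, however, invokes strict tightness of $\partial_1$ as a known fact to rule out the short separating loop $\eta$ — which is precisely the property the lemma is supposed to deliver. For the same reason your final contradiction is not available either: identifying $\hat{\beta}_1$ as a co-ray/base spiral disjoint from $\hat{\partial}_1$ presupposes the Busemann-function machinery of Section~\ref{sec:spine}, which rests on $\hat{\partial}_1$ being a minimizing geodesic (Lemma~\ref{lem:minimizing_geod}), again a consequence of tightness; before Proposition~\ref{prop:HTC_tree_bij} the curves $\hat{\beta}_1,\hat{\beta}_2$ are merely boundary ribs of the glued tiles, parametrized by signed distance to the level set $h$, not co-rays of a Busemann function.

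The paper's proof avoids this by working intrinsically with the gluing data: a distance-realizing curve $\alpha$ from $\hat{\beta}_1(t)$ to $\hat{\beta}_2(t-L_1)$ is cut at its crossings with the diagonals of the quadrilaterals; at each crossing point the distance to $h$ is the same measured in the quadrilateral on either side of the diagonal (this is exactly the gluing prescription), so the triangle inequality bounds each segment of $\alpha$ from below by the difference of consecutive distances to $h$, and summing gives length at least $|t-(t-L_1)|=L_1$ (with the no-crossing case handled directly by the triangle inequality against $h$). To repair your proposal you would need to replace the appeal to tightness by an argument of this kind that uses only the constructed tiling and the distances to $h$; as written, the key step assumes the conclusion.
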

\begin{proof}
    \begin{figure}
        \centering
            \centering
            \includegraphics[width=.7\textwidth]{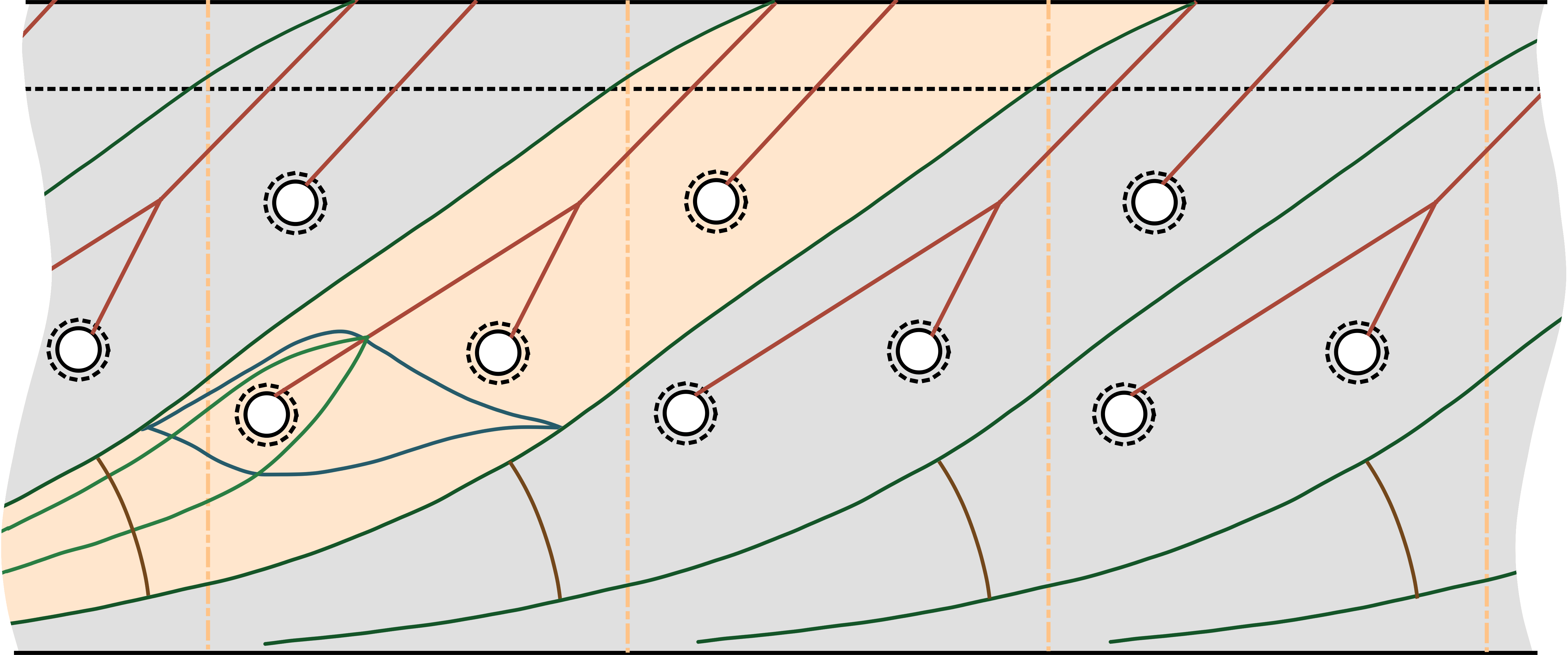}
            \caption{See Lemma~\ref{lem:min_dist}. We have in blue two examples of curves that, by this lemma, are longer than $L_1$. The lower curve is homotopic to the horocycle (with sliding endpoints on the base spirals), while the upper curve can be decomposed in two curves, which can be dealt with separately. }
    \end{figure}

    We will look at curves $\alpha$ that realize the distance between $\hat{\beta}_1(t)$ and $\hat{\beta}_2(t-L_1)$, and we identify two cases:
    \begin{itemize}
        \item Case 1: $\alpha$ does not cross any diagonal of the quadrilaterals or their endpoints.
        In this case $\alpha$ is homotopic to the horocycle $h$ and the result follows from the triangle inequality in the triangle with sided $\alpha$ and half-infinite segments of $\hat{\beta}_1$ and $\hat{\beta}_2$. 
        \item Case 2: $\alpha$ does cross $k$ times a diagonal of the quadrilaterals or their endpoints.
        In this case $\alpha$ is split in $k+1$ segments $\alpha_i$ at intersection points $x_i$ on diagonal $e_i$.
        We denote the distance within $\hypdiamond_{e_i}$ from $x_i$ to $h$ by $d_i$.  
        It follows from the gluing prescription that this distance is realized by geodesics in $\hypdiamond_{e_i}$ from $x_i$ to $h$ on both sides of the diagonal. 
        
        Again from the triangle inequality, it follows that the length of $\alpha_i$ is at least $|d_i-d_{i-1}|$, where $d_0=t-L-1$ and $d_{k+1}=t$, from which the claim follows. 
    \end{itemize}
\end{proof}
From this it is clear that the image of $\mathsf{Glue}$ is indeed a half-tight cylinder.

We can now prove the first claim of Theorem~\ref{thm:HTC_bij}
\begin{proposition}\label{prop:HTC_tree_bij}
    $\spinebijection$ is a bijection from $\mathcal{H}_n(\mathbf{L})$ to $\bigsqcup_{\tree \in \treeset^{\mathrm{all,HTC}}_n} \mathcal{A}_{\tree}(\mathbf{L})$, with $\mathsf{Glue}$ as inverse.
\end{proposition}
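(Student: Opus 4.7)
The plan is to put together the ingredients already collected: $\spinebijection$ has been shown, by the arguments of \cite[Proposition~14]{Budd2025_Tree_bij} adapted through Lemma~\ref{lem:spineproperties}, to be a well-defined injection into the polytope disjoint union, and the analysis built around Lemma~\ref{lem:min_dist} shows that $\mathsf{Glue}$ lands in $\mathcal{H}_n(\mathbf{L})$. What remains is to confirm that the two maps are mutually inverse.

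For $\mathsf{Glue}\circ\spinebijection=\operatorname{id}_{\mathcal{H}_n(\mathbf{L})}$: starting from $X\in\mathcal{H}_n(\mathbf{L})$, the decomposition of $\check X$ into the tiles $\hypdiamond_\edge$ and $\hypwedge_\corner$ is canonical, and the hyperbolic geometry of each tile is rigidly determined by the recorded angle parameter $\varphi$ together with the radii encoded in $w_{\bvertex,j},v_{\bvertex,j}$. Reassembling these tiles with the combinatorics of the spine tree and the rib-matching prescribed by $\mathsf{Glue}$ therefore recovers $X$ up to boundary-preserving isometry.

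For $\spinebijection\circ\mathsf{Glue}=\operatorname{id}$: starting from $(\tree,\varphi,w,v)\in\mathcal{A}_\tree(\mathbf{L})$ and setting $X=\mathsf{Glue}(\tree,\varphi,w,v)$, pass to the cylinder cover $\hat X$. By construction each rib extends to an asymptotically parallel geodesic to $\hat{\partial}_1$ whose length is read off from the local Busemann-level-set structure. The wrapping lemma (Lemma~\ref{lem:min_dist}) certifies that no geodesic looping around the cylinder can be a shorter asymptotically parallel geodesic, so each rib is in fact a co-ray. Consequently the spine of $X$ coincides combinatorially with $\tree$, each tile of the canonical tiling of $X$ agrees with the tile built by $\mathsf{Glue}$, and the parameters recovered by $\spinebijection$ match the input.

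The step demanding the most care is the passage, in the second composition, from \emph{asymptotically parallel geodesic} to \emph{co-ray}; this is where strict tightness of $\partial_1$ really enters. Without it, an alternative shortest asymptotically parallel geodesic that wraps around the cylinder could appear, yielding a different spine for $X$ than $\tree$. Lemma~\ref{lem:min_dist}, together with the slice-based Busemann function associated to the distinguished rib $\beta$ of $\bvertex_2$, precisely rules this out, so the two compositions are the identity and $\spinebijection$ is a bijection with inverse $\mathsf{Glue}$.
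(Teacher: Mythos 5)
Your overall strategy is the same as the paper's: $\mathsf{Glue}\circ\spinebijection=\mathrm{id}$ is essentially immediate from the rigidity of the tiles, and the substance lies in showing that the union $\Sigma$ of diagonals and inner vertices is the spine of the glued surface. However, your argument for that second composition has a genuine gap. Proving that each rib is a co-ray only gives one half of the statement (points of $\Sigma$ carry at least two base spirals); to conclude that the spine equals $\Sigma$ you must also show that a point \emph{off} $\Sigma$ has a \emph{unique} co-ray, i.e.\ you must exclude competing asymptotically parallel geodesics of minimal length that do not wrap around the cylinder at all but cut through neighbouring tiles within a single slice. Your proposal never addresses these non-wrapping competitors: the phrase ``whose length is read off from the local Busemann-level-set structure'' presupposes exactly the minimality that needs to be proven. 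The paper deals with this by working inside a slice, where the distance to the constant-Busemann level set $h$ agrees with the Busemann function, and importing the argument of the proof of Theorem~7 in \cite{Budd2025_Tree_bij}: within the slice the shortest geodesics to $h$ are disjoint from (the lift of) $\Sigma$ and are multiple precisely at points of $\Sigma$. Without this in-slice identification, the jump to ``consequently the spine of $X$ coincides combinatorially with $\tree$'' does not follow.

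Secondly, the role you assign to Lemma~\ref{lem:min_dist} is overstated. In the paper that lemma is used to show that the image of $\mathsf{Glue}$ is tight, hence lies in $\mathcal{H}_n(\mathbf{L})$, which is what makes the Busemann/co-ray machinery of Section~\ref{sec:spine} available on the glued surface in the first place; it does not by itself certify that ``no geodesic looping around the cylinder can be a shorter asymptotically parallel geodesic.'' The exclusion of co-rays crossing several slices is obtained by a different mechanism: once the in-slice analysis shows that the ribs bounding the slice are themselves co-rays, any co-ray starting inside the slice must stay inside it because co-rays cannot intersect (the lemma proved right after \eqref{eq:co-ray_bus}). If you want to keep your formulation, you would need to supply an actual derivation of the wrapping exclusion from Lemma~\ref{lem:min_dist}, and in any case add the in-slice uniqueness argument; as written, the crucial step is asserted rather than proved.
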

\begin{proof}
    By construction, it is clear that $\mathsf{Glue}\circ \spinebijection(X) = X$. 
    We thus only need to show that the reverse is also true. 
    In particular, we need to show that the union of diagonals of the quadrilaterals and inner vertices $\Sigma$ is indeed the spine of the image of $\mathsf{Glue}$.
    
    This again follows from considering a slice in the cylinder cover. 
    Within this slice, the arguments from \cite[{Proof of Theorem~7}]{Budd2025_Tree_bij} apply, meaning that the shortest geodesics from to $h$ are disjoint from (the lift of) $\Sigma$ and that there are only multiple such shortest geodesics starting at $\hat{x}$ if and only if $\hat{x}$ is on (the lift of) $\Sigma$.
    
    As discussed before, the distance to $h$ within a slice is equal to the Busemann function $B_\gamma$ for some choice of $\gamma(0)$. 
    This means that the shortest geodesics to $h$ in the slice are actually co-rays. 

    Finally, we need to check that there are no co-rays that cross multiple slices. 
    This is immediate from the fact that the slice is bounded by co-rays and co-rays cannot intersect.

    This means that $\Sigma$ is indeed the locus of points that have multiple co-rays, and thus the spine of the image of $\mathsf{Glue}$.
\end{proof}

\begin{proof}[Proof of Theorem~\ref{thm:HTC_bij}]
The first statement of Theorem~\ref{thm:HTC_bij} is exactly Proposition~\ref{prop:HTC_tree_bij}.

For the Weil--Petersson measure, we look at \cite[Section~3]{Budd2025_Tree_bij}. 
Remarkably, the arguments there only depend on the (ideal) triangulation and thus can be straightforwardly applied to our case: 
We have an ideal triangulation with the same relation between shear coordinates and tree labels $\lambda\in\polytope$, thus giving the same bivector on our polytopes.
The only difference is that our top-dimensional half-tight polytopes are different symplectic leaves, due to the different sum conditions for $(w_{\bvertex_2,i})$ and $(v_{\bvertex_2,i})$, but this does not influence the volume measure.

\end{proof}

\section{General surfaces}\label{sec:general}
We can now move on from half-tight cylinders to general surfaces with $L_1<L_2$.
We use a decomposition of the surface into half-tight cylinders, equivalent to \cite[Section~4.4]{Bouttier2024_OnQuasi_BGM_C} where this decomposition is done for maps.
On the level of Weil--Petersson volumes, this decomposition is also shown in \cite[Section~2.4]{Budd2024_Top_rec}.

\begin{figure}
    \centering
        \centering
        \includegraphics[scale=3]{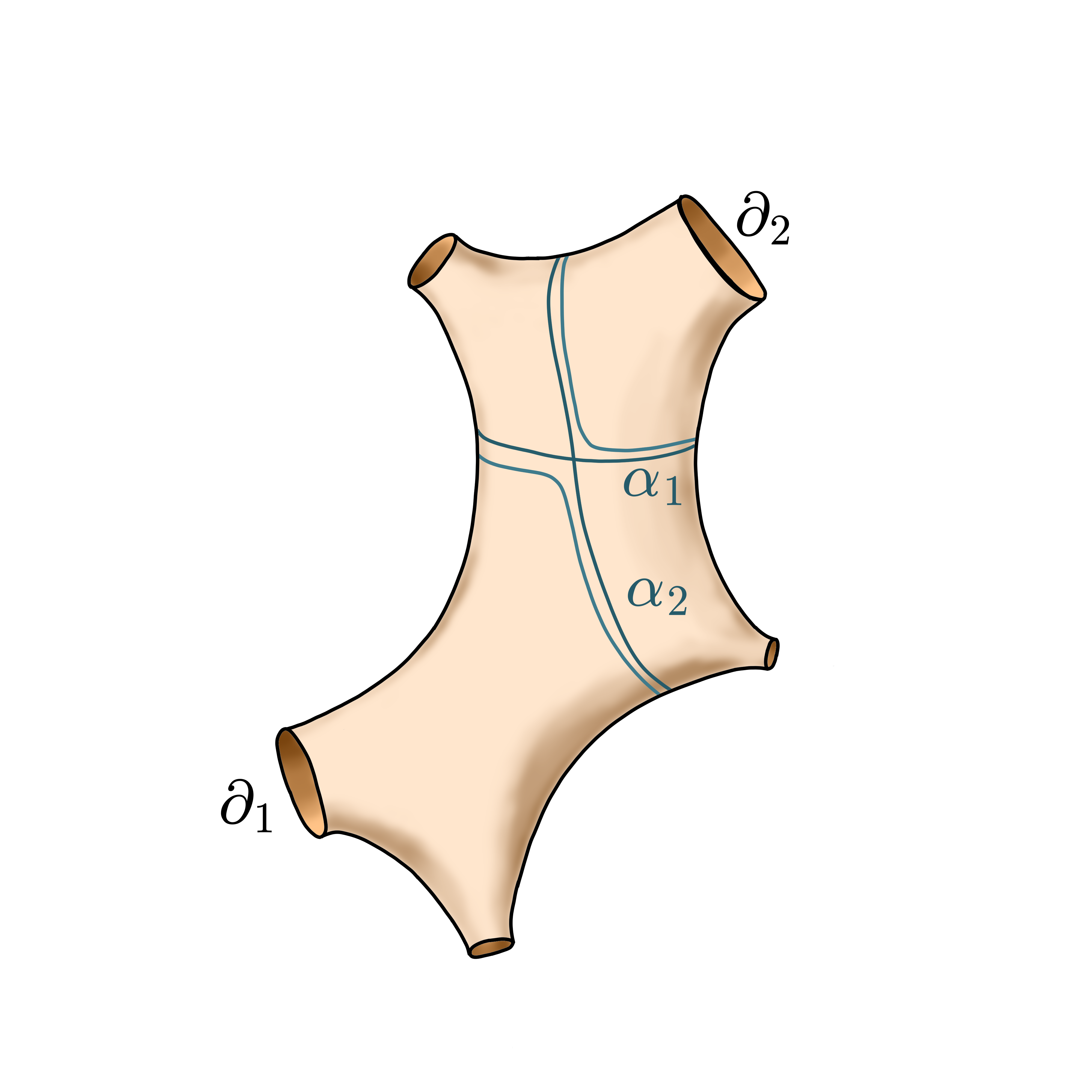}
        \caption{For Lemma~\ref{lem:unique_split}: The darker curves represent the case that $\alpha_1$ and $\alpha_2$, both of minimal length, intersect. The existence of the lighter curves, of which at least one will be shorter, gives a contradiction.\\
        For Lemma~\ref{lem:shortest_in_reglue}, we can use the same image, but one has to change the labels $\partial_1\mapsto\hat{\partial_2}$, $\alpha_1\mapsto\partial_1$ and $\alpha_2\mapsto\alpha$. 
        Again, this gives the two lighter curves, both contained in one half-tight cylinder, at least one of them shorter than $\partial_1$, which is again a contradiction.\label{fig:crossing_short_curves}}
\end{figure}

\begin{lemma}\label{lem:unique_split}
    For $L_1<L_2$, the subset $\mathcal{M}^\circ_{0,n}(\mathbf{L})\subset\mathcal{M}_{0,n}(\mathbf{L})$ that has a unique shortest closed geodesic separating $\partial_1$ and $\partial_2$, possibly being $\partial_1$ itself, is of full WP-measure.%
    \footnote{Be aware that $\mathcal{M}^\circ_{0,n}(\mathbf{L})$ is \emph{not} the equivalent of $\mathcal{M}^\circ_{0,1+n}(0,\mathbf{L})$ in \cite{Budd2025_Tree_bij}.}
\end{lemma}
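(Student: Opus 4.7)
The plan is to enumerate the topological types of simple closed curves that can realize the shortest separating geodesic, and then use real-analyticity of length functions to show that ties occur only on a null set. On a sphere with $n$ boundaries, each simple closed curve separating $\partial_1$ from $\partial_2$ is determined up to isotopy by the subset $S \subseteq \{3,\ldots,n\}$ of the remaining boundaries lying on the $\partial_1$-side, giving exactly $2^{n-2}$ isotopy classes. Two of these are peripheral: $S = \emptyset$ is homotopic to $\partial_1$ with constant length $L_1$, and $S = \{3,\ldots,n\}$ is homotopic to $\partial_2$ with constant length $L_2$. A shortest separating closed geodesic is always attained, since the minimum is taken over finitely many continuous length functions and $L_1 < L_2$ already provides an upper bound. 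Moreover, the minimizer must be simple: a self-intersection of a separating geodesic can be resolved by surgery into a strictly shorter curve that still separates $\partial_1$ from $\partial_2$ (one of the two surgery products stays separating in the appropriate sense, as in Figure~\ref{fig:crossing_short_curves}), so the minimizer lies in one of the $2^{n-2}$ enumerated classes.

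By Wolpert's theorem the length functions of simple closed curves are real-analytic on Teichm\"uller space, and since the pure mapping class group of a genus-zero surface fixes the isotopy class of every essential simple closed curve, these functions descend to real-analytic functions on $\mathcal{M}_{0,n}(\mathbf{L})$. Non-uniqueness of the minimizer forces two distinct length functions from this finite collection to agree at the minimum, so
\[
\mathcal{M}_{0,n}(\mathbf{L}) \setminus \mathcal{M}^\circ_{0,n}(\mathbf{L})
\;\subseteq\;
\bigcup_{[c_1] \neq [c_2]} \bigl\{ X \in \mathcal{M}_{0,n}(\mathbf{L}) : \ell_{[c_1]}(X) = \ell_{[c_2]}(X) \bigr\},
\]
where the union runs over the finitely many ordered pairs of distinct isotopy classes. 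For each pair the set on the right is the zero locus of the real-analytic function $\ell_{[c_1]} - \ell_{[c_2]}$; provided this function does not vanish identically, its zero set is a proper real-analytic subvariety of codimension at least one, hence of Weil--Petersson measure zero. A finite union of null sets is null, which gives the claim.

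The hard part will be verifying that $\ell_{[c_1]} - \ell_{[c_2]}$ is never identically zero for $[c_1] \neq [c_2]$. The plan is to handle this case by case. When one class is peripheral, say $[c_1] = [\partial_1]$, the difference $\ell_{[c_2]} - L_1$ is non-constant because $\ell_{[c_2]}$ can be made arbitrarily large by pinching a cuff in a pants decomposition disjoint from $\partial_1$. When both classes are essential and distinct, one chooses a pants decomposition containing a representative of $[c_1]$; Wolpert's derivative formula $\partial \ell_{[c_2]}/\partial \tau_{[c_1]} = \sum_p \cos\theta_p$ gives a non-zero derivative whenever the geometric intersection number $i(c_1, c_2)$ is positive, while if $i(c_1, c_2) = 0$ one can vary the Fenchel--Nielsen length coordinate of $c_1$ without affecting $\ell_{[c_2]}$. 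In either situation the two functions disagree on an open set, so their difference is not identically zero, and the measure-theoretic conclusion above completes the proof.
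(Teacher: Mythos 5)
Your overall strategy matches the paper's: show that a tie between two shortest separating geodesics is a codimension-one condition and therefore WP-null. The paper's own proof is quite terse on exactly this point (it simply asserts the equality locus has codimension~$1$), so your attempt to make the descent and the non-vanishing explicit is the right instinct. However, two of the concrete steps you wrote down are incorrect, and together they leave a real gap.

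First, the count of $2^{n-2}$ \emph{isotopy classes} is wrong. A simple closed curve separating $\partial_1$ from $\partial_2$ determines a partition of $\{3,\ldots,n\}$, and the partition determines the curve only up to the action of the mapping class group, not up to isotopy. There are infinitely many isotopy classes in each topological type: e.g.\ in $\mathcal{M}_{0,4}$, applying a Dehn twist along a transverse separating curve produces infinitely many pairwise non-isotopic curves with the same partition. Second, and for the same reason, the claim that the pure mapping class group of a genus-zero surface fixes every isotopy class of essential simple closed curve is false — a Dehn twist along $\gamma$ lies in the pure mapping class group and moves the isotopy class of any curve intersecting $\gamma$. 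Consequently the length functions $\ell_{[c]}$ do \emph{not} descend to well-defined real-analytic functions on $\mathcal{M}_{0,n}(\mathbf{L})$; only MCG-invariant combinations (such as the minimum over an orbit, which is not real-analytic) descend.

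The gap is repairable without changing the strategy: work on Teichm\"uller space $\mathcal{T}_{0,n}(\mathbf{L})$ rather than on the moduli space. There the length functions of all isotopy classes are genuinely real-analytic (Wolpert), and the locus where two distinct ones of length at most $L_1$ agree is a \emph{countable} union of proper real-analytic subvarieties, hence WP-null; since the projection $\mathcal{T}_{0,n}(\mathbf{L})\to\mathcal{M}_{0,n}(\mathbf{L})$ is a local isometry for the WP metric away from the orbifold locus (itself null), the null set descends. Your non-vanishing verification via Wolpert's twist-derivative formula is fine as written, once it is carried out in Teichm\"uller space. With those corrections the argument is a fleshed-out version of the paper's; the paper instead observes that two co-shortest separating geodesics must be disjoint (by the surgery argument of Figure~\ref{fig:crossing_short_curves}) and invokes the codimension statement directly, which is shorter but leaves implicit exactly the measure-theoretic bookkeeping you were trying to supply.
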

\begin{proof}
    Due to the fact that there are only finite simple closed geodesics with lengths less or equal to $L_1$, there clearly exists a shortest closed geodesic separating $\partial_1$ and $\partial_2$.
    We are only left to prove that this curve is non-unique only in a zero WP-measure subset.

    Let $\alpha_1$ and $\alpha_2$ be two different shortest closed geodesic separating $\partial_1$ and $\partial_2$. 
    Note that these curves cannot intersect: if they intersect we can find strictly shorter separating curves, see Figure~\ref{fig:crossing_short_curves}.
    So $\alpha_1$ and $\alpha_2$ are disjoint curves, of which at least one of them is not $\partial_1$.
    The fact that the curves have the same lengths, makes the subset of $\mathcal{M}_{0,n}(\mathbf{L})$ for which this is the case of co-dimension $1$. 
    This means that the subset of $\mathcal{M}_{0,n}(\mathbf{L})$, where there is a unique shortest closed geodesic separating $\partial_1$ and $\partial_2$, is of full WP-measure.
\end{proof}

\begin{lemma}\label{lem:shortest_in_reglue}
    Let $X_1\in \mathcal{H}_{n_1}(\mathbf{L})$ and $X_2\in \mathcal{H}_{n_2}(\mathbf{\hat{L}})$ be two half-tight cylinders with boundaries $\partial_i$ and $\hat{\partial}_i$ respectively and $L_1=\hat{L}_1$. 
    Let $X\in\mathcal{M}_{0,n_1+n_2-2}(L_2,\dots,L_{n_1},\hat{L}_2,\dots,\hat{L}_{n_2})$ be a surface that one gets by gluing $\partial_1$ to $\hat{\partial}_1$.
    Then the glued curve $\partial_1$ is the unique shortest geodesic separating $\partial_2$ from $\hat{\partial}_2$.
\end{lemma}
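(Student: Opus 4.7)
The plan is to argue by contradiction, following the same cut-and-paste idea as in Lemma~\ref{lem:unique_split}. Suppose there is a simple closed geodesic $\alpha\neq\partial_1$ in $X$ separating $\partial_2$ from $\hat{\partial}_2$ with $\ell(\alpha)\leq L_1$. I would first dispose of the disjoint case $\alpha\cap\partial_1=\emptyset$: then $\alpha$ lies entirely in one of the half-tight cylinders, say $X_1$, and, since it still separates $\partial_2$ from $\hat{\partial}_2$ in $X$ while $\hat{\partial}_2\subset X_2$, it must separate $\partial_1$ from $\partial_2$ inside the cylinder $X_1$. Strict tightness of $\partial_1$ in $X_1$ then forces $\ell(\alpha)>L_1$, contradicting the hypothesis.

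For the remaining case $\alpha$ meets $\partial_1$ transversely at $2k\geq 2$ points (an even number, since both curves separate $X$). I would perform the cut-and-paste surgery indicated by the lighter curves in Figure~\ref{fig:crossing_short_curves}: at each crossing smooth the intersection so that every resulting closed curve lies on a single side of $\partial_1$. This yields a disjoint family of simple piecewise-geodesic closed curves $\{\beta_j\}$, each contained entirely in $X_1$ or entirely in $X_2$ and meeting $\partial_1$ only at corners, with total length $\sum_j\ell(\beta_j)=\ell(\alpha)+\ell(\partial_1)\leq 2L_1$.

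To derive a contradiction I need at least one $\beta_j\subset X_1$ that is separating in the cylinder $X_1$, and similarly one inside $X_2$. I would extract these topologically by considering $R_1:=X_1\cap Y_1$, where $Y_1$ is the component of $X\setminus\alpha$ containing $\partial_2$: its boundary in $X_1$ consists of $\partial_2$ together with some of the $\beta_j$'s, and the boundary component enclosing $\partial_2$ is forced to be homotopic to $\partial_1$ in the cylinder $X_1$, hence separating; the symmetric argument applied to $R_2:=X_2\cap Y_2$ gives the analogous separating $\beta_j$ inside $X_2$. Each such separating $\beta_j$ is piecewise geodesic with at least one genuine corner at a transverse intersection of $\alpha$ and $\partial_1$, and is therefore strictly longer than the unique geodesic representative of its homotopy class in the corresponding half-tight cylinder, which by strict tightness is $\partial_1$ itself of length $L_1$. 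Summing the two contributions yields a total length strictly greater than $2L_1$, contradicting $\sum_j\ell(\beta_j)\leq 2L_1$. The step I expect to be most delicate is this topological identification of a separating surgery component inside each of the two half-tight cylinders; once it is in place the strict corner-shortening inequality closes the argument immediately.
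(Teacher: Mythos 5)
Your proof is correct and follows essentially the same route as the paper's (whose own proof is much terser): rule out the disjoint case by tightness, then surger $\alpha$ along its intersections with $\partial_1$ into side-respecting curves of total length at most $\ell(\alpha)+L_1\leq 2L_1$, and contradict strict tightness of the tight boundary in each half-tight cylinder. The paper phrases the final step as a pigeonhole (at least one of the two surgered separating curves has length $\leq L_1$), which is just the contrapositive of your summed strict inequalities.
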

\begin{proof}
    Let's assume that there is a curve $\alpha \neq \partial_1$ of length $\ell_\alpha \leq L_1$ that separates $\partial_2$ from $\hat{\partial}_2$. 
    Note that it cannot be fully in $X_1$ or in $X_2$, since that would violate tightness of $\partial_1$ or $\hat{\partial}_1$ respectively.
    This means that $\alpha$ intersects $\partial_1$. 
    We can now find two curves $\alpha_1$ and $\alpha_2$, which are in $X_1$ and $X_2$ respectively, separating $\partial_2$ from $\hat{\partial}_2$, where at least one has length $\ell_{\alpha_i} \leq L_1$, which we just stated to be impossible.
    See Figure~\ref{fig:crossing_short_curves}. 
    So we conclude that $\partial_1$ is the unique shortest geodesic separating $\partial_2$ from $\hat{\partial}_2$.
\end{proof}

\begin{proof}[Proof of Theorem~\ref{thm:Full_bij}]
    Note that due to Lemma~\ref{lem:unique_split}, for each surface $X\in\mathcal{M}^\circ_{0,n}(\mathbf{L})$, there is a unique shortest separating geodesic $\alpha$. 
    We consider two cases (similar to \cite[Section~2.4]{Budd2024_Top_rec}):
    
    Case 1: $\alpha=\partial_1$. 
    In this case we have a half-tight cylinder (possible after switching labels), which is according to Theorem~\ref{thm:HTC_bij} in bijection with $\bigsqcup_{\tree \in \treeset^{\mathrm{all,HTC}}_n} \mathcal{A}_{\tree}(\mathbf{L})$. 

    Case 2: $\alpha\neq\partial_1$.
    In this case we can cut the surface along $\alpha$, which gives us two half-tight cylinders. 
    These half-tight cylinders are each in bijection with $\bigsqcup_{\tree \in \treeset^{\mathrm{all,HTC}}_{|I_i|+1}} \mathcal{A}_{\tree}(\ell(\alpha),\mathbf{L}_{I_i})$, with $I_1\sqcup I_2=\{1,\dots,n\}$. 
    We can identify this with $\bigsqcup_{\dubtree \in \treeset^{\mathrm{all,full}}_n} \mathcal{A}_{\dubtree}(\mathbf{L})$ by adding $(\ell,\tau)$, where $\ell$ is the length of $\alpha$ and $\tau$ is the twist parameter that one needs to correctly glue back the half-tight cylinders.
\\\\
    The inverse mapping is straight-forward. 
    Here we also consider two cases:

    Case 1: We have $\tree \in \treeset^{\mathrm{all,HTC}}_n$.
    In this case we can use the bijection from Theorem~\ref{thm:HTC_bij} to get a half-tight cylinder with tight boundary $\partial_1$. 

    Case 2: We have $\dubtree \in \treeset^{\mathrm{all,full}}_n$.
    In this case we interpret the double trees as two trees in $\treeset^{\mathrm{all,HTC}}_{|I_i|+1}$ with corresponding labels. 
    The bijection from Theorem~\ref{thm:HTC_bij} gives us two half-tight cylinders, with tight boundaries of length $\ell$.
    We can glue the tight boundaries, using the twist $\tau$. 
    Lemma~\ref{lem:shortest_in_reglue} ensures that the glued geodesic is the unique shortest separating curve, which shows that this is indeed an inverse of the mapping above.
\\\\
    Finally, we consider the push-forward of the measure.
    In case 1, this just follows from Theorem~\ref{thm:HTC_bij}.
    In case 2, we use the fact that the WP-measure of the surface factorizes into $\dd{\ell}\dd{\tau}$ times the WP-measure of both half-tight cylinders \cite[Section~8]{Mirzakhani2006_SimpleGeodesics_TopRec}\cite[Section~2.3]{Budd2024_Top_rec}.  
\end{proof}

\section{Computing volumes}\label{sec:compute}
A clear consequence of Theorems~\ref{thm:HTC_bij} and \ref{thm:Full_bij} is that the Weil--Petersson volumes of genus-$0$ hyperbolic surfaces can be computed by summing over all (double) tree structures and integrating over all possible labels in the corresponding top-dimensional polytopes. 

There are some tricks we can use to simplify this integration, at the cost of losing some bijective interpretation.
The first tricks are straight-forward adaptations from the tricks in \cite[Section~4]{Budd2025_Tree_bij}.

\subsection{Half tight cylinder}
The important thing to note is that many labels are independent.
Especially the labels incident to a boundary vertex can be straight-forwardly integrated.
For a boundary vertex $\bvertex\neq\bvertex_2$, we get a volume of the polytope $\mathcal{A}_{\bvertex}$
\begin{align}
    \int_{0}^{L_\bvertex/2}\prod_{j=1}^{\deg{\bvertex}}\dd{w_j}\dd{v_j}\, \delta\left(L_\bvertex/2-\sum_{j=1}^{\deg{\bvertex}}w_j\right)\,\delta\left(L_\bvertex/2-\sum_{j=1}^{\deg{\bvertex}}v_j\right)=\left(\frac{(L_\bvertex/2)^{\deg{\bvertex}-1}}{(\deg{\bvertex}-1)!}\right)^2.
\end{align} 
This corresponds to 
\begin{align}
    \frac{t_{\deg{\bvertex}-1}(L_\bvertex)}{2(\deg{\bvertex}-1)!},
\end{align}
where
\begin{align}
    t_k(L)=\frac{2}{k!}\left(\frac{L}{2}\right)^{2k}.
\end{align}
The factor of two here is to match conventions, but can also be seen as (partly) incorporating the total factor $2^{n-3}$.
Similarly, the factor $(\deg{\bvertex}-1)!$ can be seen as counting the ordering of the edges around the vertex.

Similarly, we get for $\bvertex_2$
\begin{align}
    &\int_{0}^{\infty}\prod_{j=1}^{\deg{\bvertex_2}}\dd{w_j}\dd{v_j}\, \delta\left((L_2-L_1)/2-\sum_{j=1}^{\deg{\bvertex_2}}w_j\right)\,\delta\left((L_2+L_1)/2-\sum_{j=1}^{\deg{\bvertex_2}}v_j\right) \nonumber\\
&\mkern300mu =\frac{((L_2-L_1)/2)^{\deg{\bvertex_2}-1}}{(\deg{\bvertex_2}-1)!}\frac{((L_2+L_1)/2)^{\deg{\bvertex_2}-1}}{(\deg{\bvertex_2}-1)!},
\end{align}
such that the volume of $\mathcal{A}_{\bvertex_2}$ is given by
\begin{align}
    \frac{\tilde{t}_{\deg{\bvertex_2}-1}(L_2,L_1)}{2(\deg{\bvertex_2}-1)!},
\end{align}
with
\begin{align}
    \tilde{t}_k(L,\ell)=2\frac{(L^2-\ell^2)^k}{4^k k!}\ind_{\ell<L},\quad\quad k\geq0.
\end{align}

For inner vertices, we have to consider the Delaunay condition. 
As explained in \cite[Section~4]{Budd2025_Tree_bij}, we can use the principle of inclusion/exclusion to change this to anti-Delaunay conditions and contract anti-Delaunay subtrees to single vertices.

We can integrate the labels of an anti-Delaunay subtree $\tree_\vertex$, giving a volume for the polytope $\mathcal{A}_{\vertex}$ of $\gamma_{\deg{\vertex}-1}/(\deg{\vertex}-1)!$, where $\deg{\vertex}$ is the number of leaves of the anti-Delaunay subtree $\tree_\vertex$ and $\gamma_k$ is given by
\begin{align}
    \gamma_k=(-1)^k \frac{\pi^{2k-2}}{(k-1)!}.
\end{align}
See \cite{Budd2025_Tree_bij} for the derivation.

We get 
\begin{align}\label{eq:HTC_vol_from_tree}
    H_n(\mathbf{L})=\frac{1}{4}\sum_{\tree \in \widetilde{\treeset}^{\mathrm{HTC}}_n} \frac{\tilde{t}_{\deg{\bvertex_2}-1}(L_2,L_1)}{(\deg{\bvertex_2}-1)!}
    \prod_{\substack{\bvertex\in\vsetboundary(\tree)\\\bvertex\neq\bvertex_2}} \frac{t_{\deg{\bvertex}-1}(L_i)}{(\deg{\bvertex}-1)!} 
    \prod_{\vertex\in \vsetinner(\tree)}\frac{\gamma_{\deg{\vertex}-1}}{(\deg{\vertex}-1)!},
\end{align}
where we note that $\widetilde{\treeset}^{\mathrm{HTC}}_n$ is the same set as $\treeset^{\mathrm{all,HTC}}_n$.
Note that is expression is very similar to the expression in \cite[Theorem~3]{Budd2025_Tree_bij}, except for $b_2$.

\begin{proof}[Proof of Theorem~\ref{thm:HTC_gf}] 
    Note that the same $R[\mu]$ has been computed using the tree bijection in \cite{Budd2025_Tree_bij}, where $R[\mu]$ is the generating function of WP-volumes of hyperbolic surfaces with two cusps. 
    Comparing the tree bijections, in particular \cite[Equation~(66)]{Budd2025_Tree_bij} with \eqref{eq:HTC_vol_from_tree}, we can see that in our case $2R[\mu]$ is the generating function of volume contributions in \eqref{eq:HTC_vol_from_tree} corresponding to rooted subtrees of $\tree\in\widetilde{\treeset}^{\mathrm{HTC}}_n$ that do not include the special boundary vertex $b_2$.
    
    We can now easily derive the volume corresponding to any tree $\tree\in\widetilde{\treeset}^{\mathrm{HTC}}_n$, where the boundary vertex $b_2$ is incident to $k+1 \geq 1$ edges.
    We get
    \begin{align}
        H(L_1,L_2;\mu]&=\frac{1}{4}\sum_{k=0}^\infty \frac{\tilde{t}_{k}(L_2,L_1)}{k!} \frac{(2R[\mu])^k}{k+1}\\
        &=\sum_{k=0}^\infty \frac{2^{-k} R[\mu]^{k+1}}{k!(k+1)!}(L_2^2-L_1^2)^{k},   
    \end{align}
    where the factor $1/(k+1)$ accounts for the cyclic permutations around $b_2$.
\end{proof}

\subsection{General surfaces}\label{ssec:volumes_general}
We can use the same tricks for the double trees.
Still assuming $L_1<L_2$, we get
\begin{align}
    V_{0,n}(L_1,\cdots,L_n)&=\frac{1}{4}\sum_{\tree \in \widetilde{\treeset}^{\mathrm{HTC}}_n} \frac{\tilde{t}_{\deg{\bvertex_2}-1}(L_2,L_1)}{(\deg{\bvertex_2}-1)!}
    \prod_{\substack{\bvertex\in\vsetboundary(\tree)\\\bvertex\neq\bvertex_2}} \frac{t_{\deg{\bvertex}-1}(L_i)}{(\deg{\bvertex}-1)!} 
    \prod_{\vertex\in \vsetinner(\tree)}\frac{\gamma_{\deg{\vertex}-1}}{(\deg{\vertex}-1)!}\nonumber\\&\quad
     + \frac{1}{16}\sum_{\dubtree\in\widetilde{\treeset}^{\mathrm{full}}_n} \int_{0}^{\infty} \ell \dd{\ell}\frac{\tilde{t}_{\deg{\bvertex_1}-1}(L_1,\ell)}{(\deg{\bvertex_1}-1)!}\frac{\tilde{t}_{\deg{\bvertex_2}-1}(L_2,\ell)}{(\deg{\bvertex_2}-1)!}
     \prod_{\substack{\bvertex\in\vsetboundary(\dubtree)\\\bvertex\neq\bvertex_1,\bvertex_2}} \frac{t_{\deg{\bvertex}-1}(L_\bvertex)}{(\deg{\bvertex}-1)!} 
    \prod_{\vertex\in \vsetinner(\dubtree)}\frac{\gamma_{\deg{\vertex}-1}}{(\deg{\vertex}-1)!}.
\end{align}
Note that we already integrated over $\tau$.
To combine the two terms, we define
\begin{align}
    \tilde{t}_{-1}(L,\ell)=4\frac{\delta(L-\ell)}{\ell}.
\end{align}
Furthermore, we can turn the trees in $\widetilde{\treeset}^{\mathrm{HTC}}_n$ into double trees $\widetilde{\treeset}^{\mathrm{HTC}*}_n$, by adding a disconnected boundary vertex $\bvertex_1$ and denote
\begin{align}
    \treeset^{\mathrm{graph}}_n=\left(\widetilde{\treeset}^{\mathrm{HTC}*}_n\sqcup \widetilde{\treeset}^{\mathrm{full}}_n\right) / \sim,
\end{align}
where two double trees are equivalent when they are equivalent as graphs, so forgetting the planar ordering at the vertices.

We get
\begin{align}
    V_{0,n}(L_1,\cdots,L_n)&=
     \frac{1}{16}\sum_{\dubtree\in\treeset^{\mathrm{graph}}_n} \int_{0}^{\infty} \ell \dd{\ell}\,\tilde{t}_{\deg{\bvertex_1}-1}(L_1,\ell)\tilde{t}_{\deg{\bvertex_2}-1}(L_2,\ell)\prod_{\substack{\bvertex\in\vsetboundary(\dubtree)\\\bvertex\neq\bvertex_1,\bvertex_2}} t_{\deg{\bvertex}-1}(L_\bvertex) 
    \prod_{\vertex\in \vsetinner(\dubtree)}\gamma_{\deg{\vertex}-1}.
\end{align}

We can preform the integration over $\ell$.
Assuming $L_1<L_2$, a straightforward calculation shows that for $a\geq -1 $ and $b\geq0$
\begin{align}
    \int_{0}^{\infty} \ell \dd{\ell}\,\tilde{t}_{a}(L_1,\ell)\tilde{t}_{b}(L_2,\ell)=2\sum_{m=0}^b (-1)^m t_{a+1+m}(L_1)t_{b-m}(L_2),
\end{align}
which yields
\begin{align}\label{eq:graph_sum}
    V_{0,n}(L_1,\cdots,L_n)=\frac{1}{8} \sum_{\dubtree\in\treeset^{\mathrm{graph}}_n}\sum_{m=0}^{\deg{\bvertex_2}-1} (-1)^m t_{\deg{\bvertex_1}+m}(L_1)t_{\deg{\bvertex_2}-1-m}(L_2)\prod_{\substack{\bvertex\in\vsetboundary(\dubtree)\\\bvertex\neq\bvertex_1,\bvertex_2}} t_{\deg{\bvertex}-1}(L_\bvertex) \prod_{\vertex\in \vsetinner(\dubtree)}\gamma_{\deg{\vertex}-1}.
\end{align}

\begin{figure}
    \centering
    \begin{subfigure}[t]{.45\textwidth}
        \centering
        \includegraphics[scale=1]{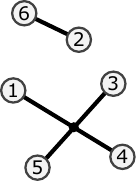}
        \caption{}
    \end{subfigure}
    \hspace{.05\textwidth}
    \begin{subfigure}[t]{.45\textwidth}
        \centering
        \includegraphics[scale=1]{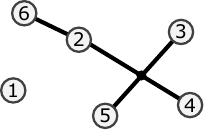}
        \caption{}
    \end{subfigure}
    \caption{Two examples of double trees in $\treeset^{\mathrm{graph}}_6$.
    The tree is colored black to stress that we neglect planar ordering at the vertices, in contrast to the red trees before.
    \\
    The left tree is not in $\treeset^{2\leftrightarrow3}_6$, since $\bvertex_2$ is not connected to $\bvertex_3$, while the right tree in $\treeset^{2\leftrightarrow3}_6$.
    \\
    Note that these double trees are related: we can move the subtree that contains $\bvertex_3$ between $\bvertex_1$ and $\bvertex_2$. 
    As a consequence, all but one term in \eqref{eq:graph_sum} regarding these double tree cancel.
    Only the $m=0$ term of the right tree remains. }
\end{figure}

We can even simplify further, by noting that we can cancel the $m=i$ term from double trees where $\bvertex_1$ and $\bvertex_3$ are in the same tree, against the $m=i+1$ term from trees where $\bvertex_1$ and $\bvertex_3$ are in different trees, leaving only the $m=0$ term in the latter trees.
This then gives
\begin{align}\label{eq:vol_weights_final}
    V_{0,n}(L_1,\cdots,L_n)=\frac{1}{8} \sum_{\dubtree\in\treeset^{2\leftrightarrow3}_n} t_{\deg{\bvertex_1}}(L_1)\prod_{\substack{\bvertex\in\vsetboundary(\dubtree)\\\bvertex\neq\bvertex_1}} t_{\deg{\bvertex}-1}(L_\bvertex) \prod_{\vertex\in \vsetinner(\dubtree)}\gamma_{\deg{\vertex}-1},
\end{align}
where the sum is now over all pairs of (combinatorial) trees $(\tree_1,\tree_2)\in\treeset^{2\leftrightarrow3}_n\subset\treeset^{\mathrm{graph}}_n$, consisting of $n$ boundary vertices $\bvertex\in\vsetboundary(\dubtree)$ with label $i=1,\ldots,n$ of arbitrary degree $\deg{\bvertex}$ and an arbitrary number of unlabeled inner vertices $\vertex\in \vsetinner(\dubtree)$ of arbitrary degree $\deg{\vertex}\geq3$, such that $\bvertex_1\in\tree_1$ and $\bvertex_2,\bvertex_3\in\tree_2$.

\begin{proposition}\label{prop:tree_gf}
    The polynomials $f_n(\hat{t}_0,\cdots,\hat{t}_{n-3};1/\hat{\gamma}_1,\hat{\gamma}_2,\cdots,\hat{\gamma}_{n-2})$ from Theorem~\ref{thm:recursion} are the generating functions of pairs of trees $\dubtree\in\treeset^{2\leftrightarrow3}_n$, where we add an extra half-edge to $\bvertex_1$, such that its degree increases by $1$. 
    In this generating function $\hat{t}_k$ counts the boundary vertices of degree $k+1$, $\hat{\gamma}_k$ counts the inner vertices of degree $k+1$ and the edges are counted by $-1/\hat{\gamma}_1$.   
\end{proposition}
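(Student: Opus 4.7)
The plan is to prove the proposition by induction on $n$, showing that the generating function $g_n$ of pairs of trees in $\treeset^{2\leftrightarrow3}_n$ (weighted as described) coincides with $f_n$ of Theorem~\ref{thm:recursion}. For the base case $n=3$, the constraints $\bvertex_1\in\tree_1$, $\bvertex_2,\bvertex_3\in\tree_2$ together with the requirement that inner vertices have degree $\geq 3$ force $\tree_1=\{\bvertex_1\}$ and $\tree_2$ to be the single edge between $\bvertex_2$ and $\bvertex_3$. With the half-edge convention at $\bvertex_1$, all three boundary vertices have effective degree $1$ and each contributes $\hat{t}_0$, while the lone edge contributes $-1/\hat{\gamma}_1$, so $g_3=-\hat{t}_0^3/\hat{\gamma}_1=f_3$.

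For the inductive step I interpret the three terms in the recursion \eqref{eq:f_rec} as the three local operations that add the new boundary vertex $\bvertex_{n+1}$ to a tree in $\treeset^{2\leftrightarrow3}_n$. The term $\hat{t}_{k+1}\partial_{\hat{\gamma}_{k+1}}$ produces a non-leaf $\bvertex_{n+1}$ of degree $k+2$, obtained by relabeling an existing inner vertex of degree $k+2$ as a boundary vertex (for $k\geq 1$) or, when $k=0$, by inserting a degree-$2$ boundary vertex in the middle of an existing edge. The term $-(\hat{t}_0/\hat{\gamma}_1)\hat{t}_{k+1}\partial_{\hat{t}_k}$ attaches $\bvertex_{n+1}$ as a new leaf, via a new edge, to an existing boundary vertex of degree $k+1$, upgrading it to degree $k+2$. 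Finally the term $-(\hat{t}_0/\hat{\gamma}_1)\hat{\gamma}_{k+2}\partial_{\hat{\gamma}_{k+1}}$ attaches $\bvertex_{n+1}$ as a leaf to an inner vertex of degree $k+2$, upgrading it to degree $k+3$ (for $k\geq 1$), or, when $k=0$, inserts a fresh degree-$3$ inner vertex in the middle of an edge and attaches $\bvertex_{n+1}$ to it. Conversely, every $\dubtree'\in\treeset^{2\leftrightarrow3}_{n+1}$ falls into exactly one of these three cases according to whether $\bvertex_{n+1}$ is a non-leaf, a leaf adjacent to a boundary vertex, or a leaf adjacent to an inner vertex; this provides the bijective correspondence needed to match the recursion. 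The Leibniz rule ensures that the combinatorial multiplicity (number of eligible boundary/inner vertices or edges where the surgery can occur) exactly equals the coefficient produced by each derivative, while the constraint that $\bvertex_1\in\tree_1$ and $\bvertex_2,\bvertex_3\in\tree_2$ is preserved trivially because the surgery only involves the label $n+1$.

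The main obstacle is the bookkeeping in the $k=0$ cases, where the recursion formally involves $\hat{\gamma}_1$ even though there are no inner vertices of degree $2$ in the tree model. The resolution is that $-1/\hat{\gamma}_1$ plays the role of the edge weight, so a tree with $E$ edges contributes a factor $(-1/\hat{\gamma}_1)^E$, and the identity $\partial_{\hat{\gamma}_1}[(-1/\hat{\gamma}_1)^E]=E\cdot(-1/\hat{\gamma}_1)^{E+1}$ shows that $\partial_{\hat{\gamma}_1}$ correctly selects one of the $E$ edges and supplies the extra edge required by the local surgery, with matching signs. Once this sign and multiplicity check is in place, the inductive step goes through and the proof concludes $g_n=f_n$ for all $n\geq 3$.
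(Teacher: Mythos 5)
Your proof is correct and follows essentially the same route as the paper: induction on $n$ with the same base case, and an inductive step that realizes the recursion \eqref{eq:f_rec} through the same local surgeries (your three terms with their $k=0$ special cases are exactly the paper's five growth operations of Figure~\ref{fig:5options}). Your explicit bookkeeping of the edge weight via $\partial_{\hat{\gamma}_1}\bigl[(-1/\hat{\gamma}_1)^E\bigr]=E(-1/\hat{\gamma}_1)^{E+1}$ and the exhaustive case analysis on $\bvertex_{n+1}$ only spell out details the paper leaves implicit.
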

Note that Theorem~\ref{thm:recursion} follows directly from this proposition, together with \eqref{eq:vol_weights_final}.
The assumption $L_1<L_2$ can be released here due to symmetry and continuity of the Weil--Petersson volumes.
\begin{proof}[Proof of Proposition~\ref{prop:tree_gf}]
    We give a proof by induction.
    Note that the base case is straight forward, given that there is only a single element in $\treeset^{2\leftrightarrow3}_3$, which has one edge connecting boundary vertices $\bvertex_2$ and $\bvertex_3$.
    
\begin{figure}
    \centering
        \includegraphics[width=\textwidth]{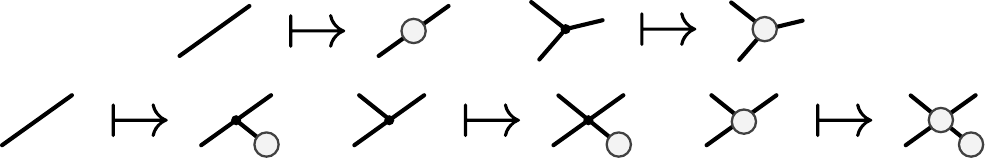}
        \caption{The five options to add a $(n+1)$-th boundary to an existing double tree in $\treeset^{2\leftrightarrow3}_n$. \label{fig:5options}}
\end{figure}

    For the induction step, we note that we can build the trees in $\treeset^{2\leftrightarrow3}_{n+1}$ by adding a $(n+1)$-th boundary vertex to the trees in $\treeset^{2\leftrightarrow3}_n$ using one of the following five distinct options, see Figure~\ref{fig:5options}:
    \begin{itemize}
        \item We can add the new boundary vertex in the middle of an edge. 
                This gives a contribution to $f_{n+1}$ of $\hat{t}_1\pdv{\hat{\gamma}_1}f_n$;
        \item We can replace an inner vertex by the new boundary vertex.
                This gives a contribution to $f_{n+1}$ of $\sum_{k=1}^{n-3}\hat{t}_{k+1}\pdv{\hat{\gamma}_{k+1}}f_n$;
        \item We can attach the new boundary vertex to another boundary vertex, using an edge.
                This gives a contribution to $f_{n+1}$ of $-\sum_{k=0}^{n-3}\hat{t}_{0}\hat{t}_{k+1}/\hat{\gamma}_1 \pdv{\hat{t}_{k}}f_n$;
        \item We can attach the new boundary vertex to an inner vertex, using an edge.
                This gives a contribution to $f_{n+1}$ of $-\sum_{k=1}^{n-3}\hat{t}_{0}\hat{\gamma}_{k+2}/\hat{\gamma}_1 \pdv{\hat{\gamma}_{k+1}}f_n$;
        \item We can attach the new boundary vertex to an edge, creating a new inner vertex.
                This gives a contribution to $f_{n+1}$ of $-\hat{t}_{0}\hat{\gamma}_{2}/\hat{\gamma}_1 \pdv{\hat{\gamma}_{1}}f_n$.
    \end{itemize} 
    Note that these contributions exactly add to \eqref{eq:f_rec}, which completes the proof.
\end{proof}

\section{Outlook}\label{sec:outlook}
The tree bijection in this paper extends the tree bijection from \cite{Budd2025_Tree_bij}, but doesn't yet cover all hyperbolic surfaces. 
In particular, the tree bijection only works for $g=0$. 
Perhaps surprisingly, in \cite{Budd2024_Top_rec} it is shown that the recursion from Theorem~\ref{thm:recursion} does also hold for higher genera, only with a different base case. 
This suggests that this tree bijection might be adapted to this case, where the image would not be a tree anymore, but some other map.

Another direction would be to include cone points, in combination with or instead of geodesic boundaries. 
In an upcoming paper \cite{Budd2026_Cones} together with Budd, we will in fact extend the tree bijection to planar hyperbolic surfaces with only cones and at least one cusp, similar to the tree bijection in \cite{Budd2025_Tree_bij}. 
It is an open question if also in this case the requirement of a single cusp can be relaxed, potentially with similar methods as in this paper.

Furthermore, in \cite{Budd2024_Top_rec} hyperbolic surfaces where a set of boundaries are tight are analyzed. 
Is this tightness property something that we can easily deduce from the tree labels?
\\\\
Also, in section~\ref{ssec:volumes_general} we used some cancellations between double trees, which resulted in keeping only the trees where boundaries 2 and 3 were connected. 
This nice cancellation suggests that there might be a more direct way to get to this result. 
Can we adapt the splitting of the surface, such that we immediately get that boundaries 2 and 3 are on the same tree? 
An answer to this question might even help to adapt this bijection to the more general cases above.

Due to fact that boundary 3 gets a more important role, we might be searching for an equivalent to \cite{Bouttier2022_Bijective_BGM_B}, where the authors use slices and Busemann functions on pairs of pants. 
\\\\
Finally, in \cite{Budd2025_Tree_bij} the tree bijection is used to get some global distance statistics. 
Can we find similar statistics in our setting, even though the spine is defined with asymptotically parallel geodesics?
In \cite{budd2025_randompuncturedhyperbolicsurfaces} similar statistics are used to find a scaling limit of uniform random hyperbolic surfaces with increasing number of cusps.
Can we also get this (or other) scaling limit for geodesic boundaries?

\clearpage
\bibliographystyle{siam}
\bibliography{busemann}

\end{document}